\setlist{leftmargin=8mm}
\newcommand{\tr}{\mathrm{tr}}
\newcommand{\dM}{{\partial M}}
\newcommand{\scal}{\mathrm{scal}}
\newcommand{\myicon}{$\,\,\,\triangleright$}
\DeclareMathOperator{\id}{\mathrm{id}}
\DeclareMathOperator{\Diff}{\mathrm{Diff}} 
\DeclareMathOperator{\cl}{\mathrm{cl}}
\DeclareMathOperator{\supp}{\mathrm{supp}}
\DeclareMathOperator{\im}{\mathrm{im}}
\DeclareMathOperator{\Int}{\mathrm{int}}
\DeclareMathOperator{\tw}{\mathrm{tw}}
\newtheorem{theorem}{Theorem}[section]
\newtheorem*{theorem*}{Sample Theorem}
\newtheorem{lemma}[theorem]{Lemma}
\newtheorem{proposition}[theorem]{Proposition}
\newtheorem{corollary}[theorem]{Corollary} 
\newtheorem*{applications}{Applications}
\newtheorem*{Gerochconjecture}{On the generalised Geroch conjecture with boundary}
\newtheorem*{Gerochconjectureunbdd}{Generalised Geroch conjecture}
\newtheorem*{factC1}{Fact CN1}
\newtheorem*{factC2}{Fact CN2}
\newtheorem*{factC1'}{Fact CN1'}
\newtheorem*{factC2'}{Fact CN2'}
\theoremstyle{definition}
\newtheorem{remark}[theorem]{Remark}
\newtheorem{definition}[theorem]{Definition}
\newcommand{\smallsquare}{     
     \vcenter{\hbox{\scalebox{0.55}{$\;\mathbin{ \square }\;$}}}    
}
\newcommand{\smallblacksquare}{     
     \vcenter{\hbox{\scalebox{0.55}{$\;\mathbin{ \blacksquare }\;$}}}    
}
\begin{document}

\title{Scalar curvature deformations with non-compact boundaries} 
\author{Helge Frerichs}
\address{Universit\"at Augsburg, Institut f\"ur Mathematik, 86135 Augsburg, Germany}
\email{\href{mailto:helge.frerichs@math.uni-augsburg.de}{helge.frerichs@math.uni-augsburg.de}}

\begin{abstract} 
We develop a general deformation principle for families of Riemannian metrics on smooth manifolds with possibly non-compact boundary, preserving lower scalar curvature bounds.
The principle is used in order to strengthen boundary conditions, from mean convex to totally geodesic or doubling.
The deformation principle preserves further geometric properties such as completeness and a given quasi-isometry type.

As an application, we prove non-existence results for Riemannian metrics with (uniformly) positive scalar curvature and mean convex boundary, including progress on a generalised Geroch conjecture with boundary and some investigation of the Whitehead manifold.
\end{abstract}

\keywords{Manifolds with non-compact boundary, lower scalar curvature bounds, lower mean curvature bounds, deformations of Riemannian metrics, local flexibility, doubling metrics, generalised Geroch conjecture, Whitehead manifold}

\subjclass[2010]{53C21, 53C23; Secondary: 53C20, 57M25, 57R55}

\thanks{\emph{Acknowledgment.} 
This work is part of my ongoing PhD project under the guidance of Bernhard Hanke.
It is supported by the TopMath Program from the Elite Network of Bavaria.
I am grateful to Shmuel Weinberger for advice regarding the Whitehead manifold.}

\date{}

\maketitle

\section{Introduction}
Existence results for Riemannian metrics with lower scalar curvature bounds on smooth manifolds and lower mean curvature bounds along the boundary have been a topic of interest in differential geometry.
The paper at hand presents a general deformation principle for Riemannian metrics with a lower scalar curvature bound on manifolds with possibly non-compact boundary.
It is motivated by a classical flexibility result of Gromov-Lawson \cite[Thm.~5.7]{GL1980}, refined by Almeida \cite[Thm.~1.1]{Almeida} and placed in a new framework by Bär-Hanke \cite[Thm.~3.7]{BH2023}.
All these results deal with compact boundaries.
The principle to be presented allows to compare various boundary conditions and applies whenever one needs a deformation which is mean curvature non-increasing.
A special case of our main result is the following:

\begin{theorem*}(See Theorem~\ref{theorem27})
Let $n\geq 2$.
Let $M$ be an $n$-dimensional manifold with non-empty (possibly non-compact) boundary and let $g$ be a Riemannian metric on $M$ with $\scal_g>0$.
Let $k\in C^{\infty}(\dM;T^\ast\dM\otimes T^\ast\dM)$ be a symmetric $(0,2)$-tensor field along $\dM$ satisfying $\tfrac{1}{n-1}\tr_{g_0}(k)\leq H_g$.
Here $g_0$ denotes the metric induced on $\dM$ and $H_g$ denotes the mean curvature of the boundary.

Then there exists a continuous path $(f_s)_{s\in[0,1]}$ of Riemannian metrics on $M$ such that
\begin{enumerate}
\item[(a)]{$\scal_{f_s}>0$ for all $s\in[0,1]$;}
\item[(b)]{$f_0=g$;}
\item[(c)]{$\mathrm{II}_{f_1}=k$ where $\mathrm{II}$ denotes the second fundamental form of the boundary.}
\end{enumerate}
The space of Riemannian metrics on $M$ is equipped with the weak $C^\infty$-topology.
\end{theorem*}
We can add various features to this assertion:
\begin{enumerate}
\item[\myicon]{Instead of only considering positive scalar curvature (psc), an arbitrary continuous function $\sigma\colon M\to\mathbb{R}$ can be used as a lower scalar curvature bound.
This means that if $\scal_g>\sigma$, then the metrics $f_s$ can be constructed in such a way that $\scal_{f_s}>\sigma$ for all $s\in[0,1]$.}
\item[\myicon]{The result also applies to families of metrics and in a relative version.
Similar to the work of Bär-Hanke \cite{BH2023}, the last aspect is a main advantage of our deformation principle compared to Gromov-Lawson's \cite{GL1980}:
It allows not only to answer existence questions for metrics with boundary conditions, but also to treat spaces of such metrics and to study their homotopy type, compare Bär-Hanke \cite[Cor.~4.9]{BH2023} and \cite[Cor.~4.14]{BH2023}.}
\item[\myicon]{The deformations may be supported in an arbitrarily small neighbourhood of the boundary.}
\item[\myicon]{Our method preserves completeness of metrics, i.e. if $g(\xi)$ is complete for some $\xi\in K$, then $f(\xi,s)$ is complete for all $s\in[0,1]$.}
\item[\myicon]{Recall that two Riemannian metrics $g$ and $h$ on $M$ are called \emph{quasi-isometric via the identity} if there exist constants $A\geq 1$ and $B\geq 0$ such that
\begin{equation}\nonumber
\frac{1}{A}\,d_h(p,q)-B\leq d_g(p,q)\leq A\,d_h(p,q)+B\quad\text{for all $p,q\in M$.}
\end{equation}
Here $d_g$ and $d_h$ denote the Riemannian distances associated to $g$ and $h$.
The above condition defines an equivalence relation on the space of Riemannian metrics on $M$.

The deformation principle preserves a given quasi-isometry type, meaning that it is carried out within a fixed equivalence class of metrics.}
\end{enumerate}

A boundary condition of high interest is what we call the doubling property for metrics on a manifold with boundary.
A \textit{doubling metric} is defined as a metric whose twofold copy is a smooth Riemannian metric on the double of the manifold, see Definition \ref{defdoubling}.
By working with doubling metrics, our deformation principle makes a connection between scalar curvature geometry on manifolds with boundary and on manifolds without boundary.
This strategy has already been employed by Gromov-Lawson \cite{GL1980}, Almeida \cite{Almeida} and Carlotto-Li \cite{CL20241,CL20242}, for instance.

In a similar spirit, using work of Hanke-Kotschick-Roe-Schick \cite[Cor.~1.8]{HKRS}, Cecchini \cite[Thm.~C]{Cecchini} and Chang-Weinberger-Yu \cite[Thm.~1]{CWY}, we prove some non-existence results for (uniform) psc-metrics with mean convex boundary.
The term `mean convex' indicates that the mean curvature is non-negative at each boundary point.
We state our applications from Corollary \ref{R2}, \ref{fromHKRS08}\hspace{1pt}-\hspace{1pt}\ref{Application3} and \ref{Whitehead}.
\begin{applications}
\begin{enumerate}
\item[(1)]{(Corollary~\ref{R2}) The right half-plane $\mathbb{R}^2_{x_1\geq 0}$ cannot carry any complete Riemannian metric of uniformly positive scalar curvature which has mean convex boundary.}
\item[(2)]{(Corollary~\ref{fromHKRS08}) Let $n\geq 3$.
The right half-space $\mathbb{R}^n_{x_1\geq 0}$ cannot carry any complete Riemannian metric of uniformly positive scalar curvature which has mean convex boundary and which is quasi-isometric to the Euclidean metric via the identity.}
\item[(3)]{(Corollary~\ref{Application3}) Let $N:=\mathbb{R}^n_{x_1\geq 0}\#T^n, n\geq 2$ be a connected sum of the right half-space with an $n$-torus attached in the interior.
Then $N$ does not admit a complete Riemannian metric of positive scalar curvature which has mean convex boundary.}
\item[(4)]{(See Corollary~\ref{Whitehead}) The Whitehead manifold has contractible submanifolds with non-compact boundary which do not admit complete Riemannian metrics of uniformly positive scalar curvature with mean convex boundary.}
\end{enumerate}
\end{applications}
Another application is Corollary \ref{Geroch}.
Here we refer to the famous Geroch conjecture, which states that the torus does not admit a metric of positive scalar curvature and that every metric on the torus with non-negative scalar curvature is flat.
It has been independently proven by Schoen-Yau and Gromov-Lawson.

A subsequent conjecture involves connected sums of tori with arbitrary manifolds.
This is called the `generalised Geroch conjecture' or the `Geroch conjecture with arbitrary ends'.
So far, it has been proven for $n=3$ by Lesourd-Unger-Yau \cite[Thm.~1.2]{LUY} and for all $n\leq 7$ by Chodosh-Li \cite[Thm.~3]{CL}.
Wang-Zhang \cite[Thm.~1.1]{WZ} proved the conjecture in all dimensions $n\in\mathbb{N}$ for the case where $M$ is spin.

\begin{Gerochconjectureunbdd}
Let $n\in\mathbb{N}$.
For any $n$-manifold $M$ (without boundary), the connected sum $T^n\# M$ does not admit a complete Riemannian metric of positive scalar curvature.
Every complete metric of non-negative scalar curvature is flat.
\end{Gerochconjectureunbdd}

We contribute to a version of Geroch's conjecture with boundary.
\begin{Gerochconjecture}(Corollary \ref{Geroch} and \ref{Gerochadd})
Let $n\geq 2$ such that the generalised Geroch conjecture (without boundary) holds true for all manifolds of dimension $n$.
Then for any $n$-manifold $M$ with non-empty boundary, the connected sum $M\# T^n$ with a torus attached in the interior does not admit a complete Riemannian metric of positive scalar curvature which has mean convex boundary.
\end{Gerochconjecture}

Together with a recent result of Cruz-Silva Santos \cite[Thm.~1.1]{CSS} and three splitting theorems for manifolds with boundary by Kasue \cite[Thm.~B~\&~C]{Kasue} and Sakurai \cite[Thm.~1.8]{Sakurai}, we can draw the following conclusions for all $n\geq 3$ for which the generalised Geroch conjecture holds:
\begin{enumerate}
\item[\textit{(a)}]{\textit{Every complete Riemannian metric on $M\# T^n$ with non-negative scalar curvature and mean convex boundary is Ricci-flat.}}
\item[\textit{(b)}]{\textit{Let $M$ be a connected $n$-manifold with non-empty boundary and one of the following topological properties:
\begin{enumerate}\nonumber
\item[(i)]{$M$ is non-compact and $\partial M$ is compact or}
\item[(ii)]{$\partial M$ is disconnected and it has a compact connected component.}
\end{enumerate}
Then the connected sum $M\# T^n$ with a torus attached in the interior does not admit a complete Riemannian metric of non-negative scalar curvature which has mean convex boundary.}}
\item[\textit{(c)}]{\textit{For any connected $n$-manifold $M$ with non-empty and non-compact boundary, the connected sum $M\# T^n$ with a torus attached in the interior does not admit a complete Riemannian metric $g$ of non-negative scalar curvature which has mean convex boundary and which has a unit-speed normal geodesic $\gamma\colon[0,\infty)\to M$ emanating from the boundary with
\begin{equation}\nonumber
\sup\{t\geq 0: d_g(\gamma(t),\partial M)=t\}=\infty.
\end{equation}}}
\end{enumerate}
The paper is organised as follows:
The first section discusses the geometry of doubling metrics. 
We will spend some effort to prove the following seemingly tautological result, see Theorem~\ref{completemetrics}:
If $g$ is a complete doubling metric on a manifold with boundary, then the twofold copy $g\cup g$ is a complete metric on the double of the manifold.
This result is of independent interest.
The proof makes use of absolutely continuous curves on manifolds with boundary, relying on work of Burtscher \cite{Burtscher}.

In the second section, we develop the main deformation principle for families of metrics with lower scalar curvature bounds.
The essential part is to generalise the deformation scheme \cite[Prop.~3.3 and 3.6]{BH2023} of Bär-Hanke to non-compact boundaries.
Here we make use of the local flexibility lemma in \cite{BH2022}.
Our deformation principle combines the work of Bär-Hanke with a closer study of the topology of the boundary.
In the last section we present our non-existence results that have been mentioned above.

\section{Completeness of doubling metrics} 
Let $M$ be a smooth manifold with non-empty boundary.
Given a Riemannian metric $g$ on $M$, its normal exponential map provides a collar neighbourhood
\begin{equation}\nonumber
\varrho_g\colon V\to U\subset M\,,\,\varrho_g(t,p)=\exp_p(tN_g(p))
\end{equation}
where $V\subset[0,\infty)\times\dM$ is an open neighbourhood of $\{0\}\times\dM$ and $N_g$ is the inward-pointing unit normal vector field along $\dM$.
We call $\varrho_g$ a \textit{geodesic collar neighbourhood}.
It induces a smooth structure on the topological double $\mathsf{D}M=M\cup_{\dM}M$.

The resulting smooth manifold is sometimes referred to as $\mathsf{D}M^g$ to emphasise the smooth structure.
Note that two different Riemannian metrics induce diffeomorphic smooth structures on $\mathsf{D}M$, since two arbitrary collar neighbourhoods for $\dM$ are isotopic.
However, the two smooth structures are not the same in general.
This will be important later.

\begin{definition}\label{defdoubling}
We call $g$ a \emph{doubling metric} if the twofold copy $g\cup g$ is a smooth Riemannian metric on $\mathsf{D}M^g$.
\end{definition}
There is another characterisation for the doubling property:
According to the generalised Gauss lemma, pulling back $g$ along $\varrho_g$ yields a generalised cylinder metric
\begin{equation}\nonumber
g=\mathrm{d}t^2+g_t
\end{equation}
where $g_t$ is given by $g_\bullet\colon V\to T^\ast\dM\otimes T^\ast\dM,g_t(p)(v,w)=(\varrho_g^\ast g)(t,p)(v,w)$.
The family $g_\bullet$ gives rise to new families $\dot{g}_\bullet,\ddot{g}_\bullet,g^{(\ell)}_\bullet\colon V\to T^\ast\dM\otimes T^\ast\dM$ of $(0,2)$-tensor fields on $\dM$ defined by
\begin{align}\nonumber
\dot{g}_t(p)(v,w)&:=\frac{\mathrm{d}}{\mathrm{d}t}(g_t(p)(v,w)),\\\nonumber
\ddot{g}_t(p)(v,w)&:=\frac{\mathrm{d}^2}{\mathrm{d}t^2}(g_t(p)(v,w)),\\\nonumber
g_t^{(\ell)}(p)(v,w)&:=\frac{\mathrm{d}^{(\ell)}}{\mathrm{d}t^{(\ell)}}(g_t(p)(v,w))
\end{align}
for all $(t,p)\in V$ and $v,w\in T_p\dM$.
Then $g$ is doubling iff $g_0^{(\ell)}\equiv 0$ for $\ell$ odd.

Apart from that, the second fundamental form of the boundary is given by $\mathrm{II}_g=-\tfrac{1}{2}\dot{g}_0$, see \cite[Prop.~4.1]{BGM}.
This means that doubling metrics need to have a totally geodesic boundary.
The aim of this section is to prove the following theorem for a connected $M$.
Related questions have been studied by Pigola-Veronelli \cite{PV}.
\begin{theorem}\label{completemetrics}
If $g$ is a complete doubling metric on $M$, then $G:=g\cup g$ is a complete metric on $\mathsf{D}M$.
\end{theorem}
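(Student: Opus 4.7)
The plan is to reduce completeness of $(\D M, d_G)$ to completeness of $(M,d_g)$ by a Cauchy sequence argument. Denote the two closed copies of $M$ inside $\D M$ by $M_+$ and $M_-$, meeting along $\dM$, and let $\tau\colon\D M\to\D M$ be the canonical involution swapping them. Since $G=g\cup g$, the map $\tau$ is a $G$-isometry fixing $\dM$ pointwise. The core claim would be
\begin{equation}\nonumber
d_g(p,q)=d_G(p,q)\qquad\text{for all }p,q\in M_+,
\end{equation}
and with this in hand, any Cauchy sequence $(p_n)_{n\in\N}$ in $(\D M,d_G)$ has an infinite subsequence contained entirely in one of $M_+$, $M_-$ by pigeonhole; by symmetry assume $M_+$. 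The claim then makes this subsequence Cauchy in $(M,d_g)$, and completeness of $g$ yields a limit $p\in M$, which is also a $d_G$-limit of the original sequence.

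The inequality $d_G\leq d_g$ on $M_+\times M_+$ is immediate: every piecewise smooth curve in $M$ remains a piecewise smooth curve in $\D M^g$ of the same length. For the reverse inequality I would \emph{fold} any piecewise smooth path $\gamma\colon[a,b]\to\D M^g$ joining $p,q\in M_+$ back into $M_+$ via
\begin{equation}\nonumber
\tilde\gamma(t):=\begin{cases}\gamma(t)&\text{if }\gamma(t)\in M_+,\\ \tau(\gamma(t))&\text{if }\gamma(t)\in M_-,\end{cases}
\end{equation}
the two prescriptions agreeing on $\dM$. Because $\tau$ is a $G$-isometry, $\tilde\gamma\colon[a,b]\to M$ is continuous and absolutely continuous with $|\dot{\tilde\gamma}(t)|_g=|\dot\gamma(t)|_G$ for almost every $t$, hence $L_g(\tilde\gamma)=L_G(\gamma)$.

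The main obstacle, and where Burtscher's work enters, is that $\tilde\gamma$ need not be piecewise smooth in $M$: the boundary crossings of $\gamma$ may accumulate (e.g.\ a smooth $\gamma$ whose normal coordinate equals $e^{-1/s^2}\sin(1/s)$ near a limit point), and folding turns each crossing into a corner, so $\tilde\gamma$ can carry infinitely many corners. To relate $L_g(\tilde\gamma)$ to $d_g$ one therefore needs the identification of the Riemannian distance on a manifold with boundary with the infimum of $L_g$ taken over all \emph{absolutely continuous} curves joining the two points, which is exactly the content of \cite{Burtscher}. With this in hand, $d_g(p,q)\leq L_g(\tilde\gamma)=L_G(\gamma)$ for every such $\gamma$, and taking the infimum over $\gamma$ gives $d_g\leq d_G$ on $M_+$, closing the argument.
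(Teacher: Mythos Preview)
Your proposal is correct and mirrors the paper's argument: establish $d_g=d_G$ on one copy of $M$ by folding curves across $\dM$ and invoking Burtscher's identification of the Riemannian distance with the absolutely-continuous length infimum, then finish with the pigeonhole Cauchy-subsequence step. The only place your write-up is lighter than the paper's is the assertion that $\tilde\gamma$ is absolutely continuous with $|\dot{\tilde\gamma}|_g=|\dot\gamma|_G$ almost everywhere---this does not follow merely from ``$\tau$ is a $G$-isometry'' (the fold map $\mathrm{pr}$ is not smooth across $\dM$), and the paper supplies the needed collar-coordinate computation $(t(s),p(s))\mapsto(|t(s)|,p(s))$ together with a case analysis at boundary times in its Proposition~\ref{Reflectionargument}.
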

By completeness we mean metric completeness, which is the convergence of all Cauchy sequences with respect to the Riemannian distance.
The next equation is crucial:
\begin{proposition}\label{samedistance}
It holds $d_g(p,q)=d_G(p,q)$ for all $p,q\in M$.
\end{proposition}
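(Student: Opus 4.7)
The inequality $d_G(p,q)\le d_g(p,q)$ is automatic: any piecewise smooth curve in $M$ joining $p$ and $q$ is simultaneously a curve in $\D M$ of the same length, so serves as a competitor in the definition of $d_G(p,q)$.

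For the reverse inequality the idea is to \emph{fold} curves in $\D M$ back into $M$. Because $g$ is doubling, $G=g\cup g$ is a genuine smooth Riemannian metric on $\D M^g$, and the canonical involution $\tau\colon\D M\to\D M$ exchanging the two copies $M_+,M_-$ of $M$ is a smooth isometry of $(\D M,G)$ fixing $\dM$ pointwise. I would define a continuous fold map $\pi\colon\D M\to M$ by $\pi|_{M_+}=\id$ and $\pi|_{M_-}=\tau$; it is smooth off $\dM$. Given any absolutely continuous curve $\gamma\colon[0,1]\to\D M$ from $p$ to $q$, the plan is to verify that $\bar\gamma:=\pi\circ\gamma$ is an absolutely continuous curve in $M$ from $p$ to $q$ satisfying $L_g(\bar\gamma)=L_G(\gamma)$. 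Taking the infimum over $\gamma$ would then yield $d_g(p,q)\le d_G(p,q)$.

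The length identity is checked pointwise almost everywhere. At a.e.\ $t$ with $\gamma(t)$ in the interior of $M_+$ or $M_-$, the fold $\pi$ is a local Riemannian isometry from $(\D M,G)$ into $(M,g)$, so $|\dot{\bar\gamma}(t)|_g=|\dot\gamma(t)|_G$. On the set $A:=\gamma^{-1}(\dM)$ I would invoke the classical tangency lemma for absolutely continuous curves valued in a closed submanifold: at almost every Lebesgue-density point $t\in A$, the velocity $\dot\gamma(t)$ lies in $T_{\gamma(t)}\dM$. Since the differential of $\tau$ acts as the identity on $T\dM$, the folded derivative $\dot{\bar\gamma}(t)$ is well-defined on $A$ and has the same $g$-norm as $\dot\gamma(t)$ has $G$-norm.

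The main obstacle is making this argument rigorous across the fold locus $\dM$, where $\pi$ fails to be differentiable. This forces one to work in the framework of absolutely continuous curves on manifolds with boundary, using Burtscher's results cited in the introduction: one needs to know that both $d_g$ and $d_G$ are realised as infima of lengths of \emph{absolutely continuous} (not merely piecewise smooth) curves, one must show that $\bar\gamma$ qualifies as such a curve in $M$ despite its construction via the non-smooth map $\pi$, and one must justify the tangency statement for $\gamma|_A$ in local charts adapted to $\dM$. Once these analytic ingredients are in place, the length identity reduces cleanly to the smoothness of $\tau$ and to the fact, noted earlier in this section, that a doubling metric is automatically totally geodesic along its boundary.
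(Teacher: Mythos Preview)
Your proposal is correct and follows essentially the same strategy as the paper: both fold curves in $\D M$ back into $M$ via the projection $\pi=\mathrm{pr}$, pass to absolutely continuous curves because piecewise smooth curves are not preserved under folding, verify that the folded curve remains absolutely continuous, and check the pointwise length identity using Burtscher's framework. The only minor variation is your treatment of points in $A=\gamma^{-1}(\dM)$ via a density-point tangency argument, whereas the paper argues directly (Proposition~\ref{Reflectionargument}(b)) by a case analysis showing $t'(s_0)=|t|'(s_0)=0$ at any $s_0$ where both $\gamma$ and $\tilde\gamma$ are differentiable; the two arguments are interchangeable.
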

Here $d_g(p,q)$ is the Riemannian distance between $p$ and $q$ in $M$ with respect to $g$ and $d_G(p,q)$ is their distance in $\mathsf{D}M$ with respect to $G$.
Note that we have to consider all admissible curves in $\mathsf{D}M$ in order to determine $d_G(p,q)$, whereas for $d_g(p,q)$ we only look at curves in $M$.
This is why the inequality $d_g(p,q)\geq d_G(p,q)$ is obvious.

To address the other inequality, we use a reflection argument: Reflecting an admissible curve in $\mathsf{D}M$ along the hypersurface $\dM$ results in a curve in $M$ that retains its original length.
Therefore, there are no shorter curves between $p$ and $q$ in $\mathsf{D}M$ than in $M$.

Unfortunately, not every class of admissible curves is invariant under reflection.
For example, the common class of piecewise smooth curves is not, as can be seen from the right half-plane $M=\mathbb{R}^2_{x_1\geq 0}$ and the Euclidean metric $g=g_{\mathrm{eucl}}$.
Its double $\mathsf{D}M^g$ is diffeomorphic to $\mathbb{R}^2$ via the canonical map
\begin{equation}\nonumber
\kappa\colon\mathbb{R}^2\to\mathsf{D}M\,,\,\kappa(x)=\left\{\begin{array}{ll} \left[(x_1,x_2),1\right], & x_1\geq 0 \\
         \left[(-x_1,x_2),2\right], & x_1\leq 0\end{array}\right..
\end{equation}
Then
\begin{equation}\nonumber
\gamma\colon[0,1]\to\mathbb{R}^2\,,\,\gamma(t)=\left\{\begin{array}{ll}(e^{-1/t^2}\sin(\tfrac{\pi}{t}),t), & t\neq 0 \\
         (0,0), & t=0\end{array}\right.
\end{equation}
is a piecewise smooth curve but its reflection to $M$ is not.
This is why we consider the larger class of absolutely continuous curves.
The following considerations are based on Burtscher \cite{Burtscher}.
\begin{definition}
Let $I\subset\mathbb{R}$ be an interval (open or closed) or an open subset.
A function $f\colon I\to\mathbb{R}^n$ is called \emph{absolutely continuous} if for every $\varepsilon>0$, there exists some $\delta>0$ such that for every $m\in\mathbb{N}$ and every family $\{(a_i,b_i)\}_{i=1}^m$ of disjoint intervals with $[a_i,b_i]\subset I$ and total length $\sum_{i=1}^m|b_i-a_i|<\delta$, it holds
\begin{equation}\nonumber
\sum_{i=1}^m\Vert f(b_i)-f(a_i)\Vert<\varepsilon.
\end{equation}
Here $\Vert\cdot\Vert$ denotes the Euclidean norm on $\mathbb{R}^n$.

A function $f\colon I\to\mathbb{R}^n$ is said to be \emph{locally absolutely continuous} if it is absolutely continuous on every closed interval $[a,b]\subset I$.

A function $f\colon I\to\mathbb{R}^n_{x_1\geq 0}$ is called (locally) absolutely continuous if it is (locally) absolutely continuous  as a function $I\to\mathbb{R}^n$.
\end{definition}
Now let $M$ be a connected smooth manifold with boundary of dimension $n\in\mathbb{N}$.
The boundary may or may not be empty.
\begin{definition}
Let $I\subset\mathbb{R}$ be a closed interval.
A continuous curve $\gamma\colon I\to M$ is called absolutely continuous if, for every chart $(\varphi,U)$ of $M$, the function
\begin{equation}\nonumber
\varphi\circ\gamma\colon\gamma^{-1}(U)\to\varphi(U)\subset\mathbb{R}^n_{x_1\geq 0}
\end{equation}
is locally absolutely continuous.
The set of all absolutely continuous curves $\gamma\colon[0,1]\to M$ is denoted by $\mathscr{A}_{ac}(M)$ respectively $\mathscr{A}_{ac}$.
For any two points $p,q\in M$, we denote by $\mathscr{A}_{ac}^{p,q}\subset\mathscr{A}_{ac}$ the set of all absolutely continuous curves $\gamma\colon[0,1]\to M$ with $\gamma(0)=p$ and $\gamma(1)=q$.
\end{definition}
We show that absolute continuity is a well-behaved concept in differential topology.
\begin{lemma}\label{oneatlas}
Let $I\subset\mathbb{R}$ be a closed interval.
A continuous curve $\gamma\colon I\to M$ is absolutely continuous iff for every $p\in\gamma(I)$, there exists a local chart $(\varphi,U)$ of $M$ around $p$ such that $\varphi\circ\gamma\colon\gamma^{-1}(U)\to\varphi(U)$ is locally absolutely continuous.
\end{lemma}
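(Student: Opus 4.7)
The plan is to prove the two directions separately. The ``only if'' direction is immediate from the definition: the hypothesis that $\varphi\circ\gamma$ is locally absolutely continuous for \emph{every} chart $(\varphi,U)$ trivially supplies such a chart around each point of $\gamma(I)$.

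For the converse, I would fix an arbitrary chart $(\psi,W)$ of $M$ together with a compact subinterval $[c,d]\subset\gamma^{-1}(W)$, and reduce to showing that $\psi\circ\gamma$ is absolutely continuous on $[c,d]$. Since $\gamma([c,d])$ is compact and each of its points lies in some hypothesised chart domain $U_p\subset M$, I would first replace each $U_p$ by $U_p\cap W$ (noting that restricting the domain preserves local absolute continuity of $\varphi_p\circ\gamma$) and then extract a finite subcover $(\varphi_{p_1},U_{p_1}),\dots,(\varphi_{p_k},U_{p_k})$ of $\gamma([c,d])$. The Lebesgue number lemma applied to the open cover $\{\gamma^{-1}(U_{p_i})\}_{i=1}^{k}$ of $[c,d]$ yields a partition $c=s_0<s_1<\dots<s_M=d$ such that each $\gamma([s_{j-1},s_j])$ is contained in some $U_{p_{i(j)}}$. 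On every subinterval I would then factor
$$\psi\circ\gamma\big|_{[s_{j-1},s_j]}=(\psi\circ\varphi_{p_{i(j)}}^{-1})\circ(\varphi_{p_{i(j)}}\circ\gamma),$$
and observe that the transition map $\psi\circ\varphi_{p_{i(j)}}^{-1}$ is smooth on its domain, hence Lipschitz on the compact image $\varphi_{p_{i(j)}}(\gamma([s_{j-1},s_j]))$. Since $\varphi_{p_{i(j)}}\circ\gamma$ is absolutely continuous on $[s_{j-1},s_j]$ by assumption, and post-composition with a Lipschitz function preserves absolute continuity, $\psi\circ\gamma$ is absolutely continuous on each $[s_{j-1},s_j]$.

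Gluing the pieces is routine: given $\eps>0$, take $\delta_j>0$ witnessing absolute continuity on $[s_{j-1},s_j]$ with tolerance $\eps/M$, and set $\delta:=\min_j\delta_j$; any disjoint family of subintervals of $[c,d]$ of total length less than $\delta$ may be split along the finitely many partition points $s_1,\dots,s_{M-1}$ into subfamilies, one inside each $[s_{j-1},s_j]$, whose contributions each stay below $\eps/M$, summing to at most $\eps$. The only step that needs a moment's thought is the Lipschitz behaviour of the transition map in the half-space setting $\R^n_{x_1\geq 0}$, but this is not a genuine obstacle: a smooth map between relatively open subsets of $\R^n_{x_1\geq 0}$ is by convention the restriction of a smooth map on an open Euclidean neighbourhood, and is therefore locally Lipschitz in the usual sense. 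Beyond that, the argument is precisely the classical chart-invariance proof for absolutely continuous curves on boundaryless manifolds.
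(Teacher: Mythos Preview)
Your proof is correct and follows essentially the same approach as the paper: partition $[c,d]$ so that each piece maps into a single hypothesised chart, use that the smooth transition maps are Lipschitz on the compact images to transfer absolute continuity to $\psi\circ\gamma$ on each piece, and then glue the finitely many pieces. The only cosmetic differences are that you invoke the Lebesgue number lemma explicitly and absorb the Lipschitz constants into the abstract statement ``Lipschitz post-composition preserves absolute continuity'', whereas the paper carries them through the final $\varepsilon$--$\delta$ estimate by hand.
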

\begin{proof}
Let $(\psi,V)$ be an arbitrary local chart of $M$ with $V\cap\gamma(I)\neq\varnothing$.
Let $[a,b]\subset\gamma^{-1}(V)$ be a closed interval.
We need to prove absolute continuity for $\psi\circ\gamma\colon[a,b]\to\psi(V)$.

By assumption there exists an atlas $(\varphi_\ell,U_\ell)_{\ell\in I}$ of $M$ such that $\varphi_\ell\circ\gamma\colon\gamma^{-1}(U_\ell)\to\varphi_\ell(U_\ell)$ is locally absolutely continuous for every $\ell\in I$.
Then we find a partition $a=s_0<s_1<\dots<s_N=b$ of $[a,b]$ so that $\gamma|_{[s_{j-1},s_j]}$ is entirely contained in one of the neighbourhoods $U_\ell$.
To every $1\leq j\leq N$ we assign such a neighbourhood $U_j$.

Every transition map $\psi\circ\varphi_j^{-1}\colon\varphi_j(V\cap U_j)\to\psi(V\cap U_j)$ is smooth, hence Lipschitz continuous on the compact subspace $\im(\varphi_j\circ\gamma|_{[s_{j-1},s_j]})$.
Let $(L_j)_{j=1}^N$ be a family of associated positive Lipschitz constants.

Now we prove that $\psi\circ\gamma\colon[a,b]\to\psi(V)$ is absolutely continuous.
To this end, let $\varepsilon>0$ and let $\delta>0$ be small enough so that for every $1\leq j\leq N$, every $m\in\mathbb{N}$ and every family $\{(a_i,b_i)\}_{i=1}^m$ of disjoint intervals with $[a_i,b_i]\subset[s_{j-1},s_j]$ and total length $\sum_{i=1}^m|b_i-a_i|<\delta$, it holds
\begin{equation}\nonumber
\sum_{i=1}^m\Vert(\varphi_j\circ\gamma)(b_i)-(\varphi_j\circ\gamma)(a_i)\Vert<\frac{\varepsilon}{L_jN}.
\end{equation}
Let $m\in\mathbb{N}$ and let $\{(a_i,b_i)\}_{i=1}^m$ be a family of disjoint intervals with $[a_i,b_i]\subset[a,b]$ and total length $\sum_{i=1}^m|b_i-a_i|<\delta$.
After dividing the intervals $(a_i,b_i)$ if necessary, we can assume that for each $1\leq i\leq m$, there is some $1\leq j\leq N$ with $[a_i,b_i]\subset[s_{j-1},s_j]$.
Then it holds
\begin{align}\nonumber
\sum_{i=1}^m&\Vert(\psi\circ\gamma)(b_i)-(\psi\circ\gamma)(a_i)\Vert\\\nonumber
&=\sum_{j=1}^N\sum\limits_{\substack{[a_i,b_i]\\\subset[s_{j-1},s_j]}}\Vert(\psi\circ\gamma)(b_i)-(\psi\circ\gamma)(a_i)\Vert\\\nonumber
&=\sum_{j=1}^N\sum\limits_{\substack{[a_i,b_i]\\\subset[s_{j-1},s_j]}}\Vert(\psi\circ\varphi_j^{-1})((\varphi_j\circ\gamma)(b_i))-(\psi\circ\varphi_j^{-1})((\varphi_j\circ\gamma)(a_i))\Vert\\\nonumber
&\leq\sum_{j=1}^N\sum\limits_{\substack{[a_i,b_i]\\\subset[s_{j-1},s_j]}}L_j\cdot\Vert(\varphi_j\circ\gamma)(b_i)-(\varphi_j\circ\gamma)(a_i)\Vert<\sum_{j=1}^NL_j\cdot\frac{\varepsilon}{L_jN}=\varepsilon.\qedhere
\end{align}
\end{proof}
\begin{remark}
Burtscher \cite[Prop.~3.4]{Burtscher} is a consequence of Lemma~\ref{oneatlas}.
\end{remark}
\begin{proposition}\label{aedifferentiable}
Let $I\subset\mathbb{R}$ be a closed interval. 
Every absolutely continuous curve $\gamma\colon I\to M$ is differentiable almost everywhere.
If $g$ is a Riemannian metric on $M$, then $\vert\dot{\gamma}\vert_g\colon I\to\mathbb{R}$ is integrable.
\end{proposition}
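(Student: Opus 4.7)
The plan is to reduce the assertion to the classical real-analysis theorem that an absolutely continuous function $f\colon[a,b]\to\R^n$ is differentiable almost everywhere and that its derivative lies in $L^1([a,b];\R^n)$. Since the paper's definition of absolute continuity for maps into $\R^n_{x_1\geq 0}$ is verbatim absolute continuity into $\R^n$, the classical theorem applies to our coordinate curves without modification.

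First, I use that $I$ is compact and $\gamma$ continuous, so $\gamma(I)$ is compact. By Lemma~\ref{oneatlas} I can cover $\gamma(I)$ by finitely many charts $(\varphi_1,U_1),\ldots,(\varphi_N,U_N)$ for which every $\varphi_j\circ\gamma$ is locally absolutely continuous on $\gamma^{-1}(U_j)$. A Lebesgue-number argument applied to the open cover $\{\gamma^{-1}(U_j)\}_{j=1}^N$ of $I$ yields a partition $a=s_0<s_1<\cdots<s_m=b$ of $I$ and indices $\ell(1),\ldots,\ell(m)\in\{1,\ldots,N\}$ such that $\gamma([s_{k-1},s_k])\subset U_{\ell(k)}$ for every $k$. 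On each $[s_{k-1},s_k]$ the map $\varphi_{\ell(k)}\circ\gamma$ is absolutely continuous as an $\R^n$-valued function, and the classical theorem produces a full-measure subset $E_k\subset[s_{k-1},s_k]$ on which this coordinate curve is differentiable, together with an $L^1$ representative of its derivative.

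At every $t\in E_k\setminus\{s_0,\ldots,s_m\}$ I define the tangent vector $\dot\gamma(t)\in T_{\gamma(t)}M$ by declaring its coordinate expression in the chart $(\varphi_{\ell(k)},U_{\ell(k)})$ to equal $(\varphi_{\ell(k)}\circ\gamma)'(t)$. The step requiring the most care is the verification that this tangent vector is genuinely chart-independent on overlaps; this is really the only non-formal point in the argument. It follows from the chain rule applied to the smooth transition maps: if a second chart $(\psi,V)$ contains $\gamma(t)$, then near $t$ one has $\psi\circ\gamma=(\psi\circ\varphi_{\ell(k)}^{-1})\circ(\varphi_{\ell(k)}\circ\gamma)$, and since the outer map is smooth (hence $C^1$), $\psi\circ\gamma$ is differentiable at $t$ with derivative $D(\psi\circ\varphi_{\ell(k)}^{-1})(\varphi_{\ell(k)}(\gamma(t)))\cdot(\varphi_{\ell(k)}\circ\gamma)'(t)$, which is precisely the change-of-coordinates law for the same tangent vector. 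Consequently, outside the null set consisting of the complement of $\bigcup_k E_k$ together with the finite set $\{s_0,\ldots,s_m\}$, the curve $\gamma$ is differentiable in the manifold sense.

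For integrability of $|\dot\gamma|_g$ I argue piece by piece. On $[s_{k-1},s_k]$ the coefficient functions $g_{ab}\circ\gamma$ of $g$ in the chart $(\varphi_{\ell(k)},U_{\ell(k)})$ are continuous on a compact set and therefore bounded, so there is a constant $C_k>0$ with $|\dot\gamma(t)|_g\leq C_k\,\Vert(\varphi_{\ell(k)}\circ\gamma)'(t)\Vert$ almost everywhere on $[s_{k-1},s_k]$. Since the right-hand side is in $L^1([s_{k-1},s_k])$ by the classical theorem, $|\dot\gamma|_g$ is integrable on each piece, and summing the finitely many pieces delivers integrability on $I$.
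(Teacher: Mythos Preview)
Your argument is correct. The paper does not actually supply a proof of this proposition; it simply cites \cite[Prop.~3.7]{Burtscher}. What you have written is essentially the standard proof (and in substance what Burtscher does): use compactness of $I$ to partition it so each piece lands in a single chart, apply the classical Lebesgue theorem for absolutely continuous $\R^n$-valued functions on each piece, and check via the chain rule with smooth transition maps that the pointwise derivatives patch to a well-defined tangent vector. The integrability estimate $|\dot\gamma|_g\leq C_k\,\Vert(\varphi_{\ell(k)}\circ\gamma)'\Vert$ from boundedness of the metric coefficients on compacta is likewise correct. One cosmetic remark: invoking Lemma~\ref{oneatlas} is not strictly necessary here, since by the very definition of an absolutely continuous curve the coordinate expression is locally absolutely continuous in \emph{every} chart; any finite chart cover of $\gamma(I)$ would do.
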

\begin{proof}
See \cite[Prop 3.7]{Burtscher}.
\end{proof}
We fix a Riemannian metric $g$ on $M$.
By Proposition~\ref{aedifferentiable} each absolutely continuous curve $\gamma$ has a well-defined length $\ell_g(\gamma):=\int_I\vert\dot{\gamma}(s)\vert_g\;\mathrm{d}s$.
Then $(\mathscr{A}_{ac},\ell_g)$ is a length structure on $M$ and thus induces a distance function $d_{ac}$.

We want to show that $d_{ac}$ coincides with the usual Riemannian distance $d_g$ which is determined by piecewise smooth curves. 
Note that piecewise smooth curves are absolutely continuous since they are absolutely continuous on a partition of their domain.

\begin{definition}
The \emph{variational metric} on $\mathscr{A}_{ac}$ is defined by
\begin{equation}\nonumber
D_{ac}\colon\mathscr{A}_{ac}\times\mathscr{A}_{ac}\to\mathbb{R}\,,\,D_{ac}(\gamma_1,\gamma_2)=\sup_{s\in[0,1]}d_g(\gamma_1(s),\gamma_2(s))+\int_0^1|\vert\dot{\gamma}_1(s)\vert_g-\vert\dot{\gamma}_2(s)\vert_g|\;\mathrm{d}s.
\end{equation}
\end{definition}
This is indeed a metric, i.e. $(\mathscr{A}_{ac},D_{ac})$ is a metric space.
\begin{remark}\label{lengthlipschitz}
The length functional $\ell_g\colon\mathscr{A}_{ac}\to\mathbb{R}$ is 1-Lipschitz with respect to the variational metric $D_{ac}$.
\end{remark}
\begin{proposition}\label{dense}
For every $p,q\in M$, the set $\mathscr{A}_{\infty}^{p,q}$ of piecewise smooth curves $\gamma\colon[0,1]\to M$ with $\gamma(0)=p$ and $\gamma(1)=q$ is dense in $(\mathscr{A}_{ac}^{p,q},D_{ac})$.
\end{proposition}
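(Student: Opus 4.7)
The plan is to reduce to a classical smoothing result for absolutely continuous curves in $\mathbb{R}^n$ (respectively $\mathbb{R}^n_{x_1\geq 0}$) via a chart argument, and then to verify both parts of the variational distance $D_{ac}$ directly.

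\emph{Localization.} Since $\gamma([0,1])$ is compact, I first pick a partition $0=t_0<\dots<t_N=1$ together with charts $(\varphi_j,U_j)$, with $\varphi_j(U_j)\subset\mathbb{R}^n_{x_1\geq 0}$, such that $\gamma([t_{j-1},t_j])\subset U_j$. By Lemma~\ref{oneatlas} and Proposition~\ref{aedifferentiable}, each $\tilde\gamma_j:=\varphi_j\circ\gamma|_{[t_{j-1},t_j]}$ is absolutely continuous with $L^1$ derivative. On each piece I will approximate $\tilde\gamma_j$ by smooth curves $\tilde\gamma_j^{(\eps)}$ that stay in $\mathbb{R}^n_{x_1\geq 0}$, match the endpoints of $\tilde\gamma_j$, and satisfy $\tilde\gamma_j^{(\eps)}\to\tilde\gamma_j$ uniformly and $\dot{\tilde\gamma}_j^{(\eps)}\to\dot{\tilde\gamma}_j$ in $L^1$ as $\eps\to 0$. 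The standard construction is mollification with a nonnegative symmetric kernel $\rho_\eps$, applied after extending $\tilde\gamma_j$ to $\mathbb{R}$ by its endpoint values (which preserves the half-space by convexity), followed by a convex-combination correction
\begin{equation}\nonumber
\tilde\gamma_j^{(\eps)}:=\lambda_\eps L_j+(1-\lambda_\eps)(\tilde\gamma_j\ast\rho_\eps),
\end{equation}
where $L_j$ is the affine interpolant between the two endpoints and $\lambda_\eps$ is a smooth bump equal to $1$ at the endpoints of $[t_{j-1},t_j]$ and supported in a collar of width $\delta_\eps\to 0$. As a convex combination of half-space curves, $\tilde\gamma_j^{(\eps)}$ stays in the half-space and matches the desired endpoints, so after gluing I obtain piecewise smooth curves $\gamma^{(\eps)}(s):=\varphi_j^{-1}(\tilde\gamma_j^{(\eps)}(s))$ in $\mathscr{A}_\infty^{p,q}$.

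\emph{Verification of the variational distance.} On the compact image of each $\tilde\gamma_j$ the pulled-back metric has smooth, uniformly continuous components $(g_{\alpha\beta})$ pinched between $\lambda I$ and $\Lambda I$ for some $0<\lambda\leq\Lambda$. The supremum part of $D_{ac}$ follows from the local bound $d_g(\varphi_j^{-1}(x),\varphi_j^{-1}(y))\leq\sqrt{\Lambda}\,\vert x-y\vert_{\mathrm{eucl}}$ together with the uniform convergence $\tilde\gamma_j^{(\eps)}\to\tilde\gamma_j$. For the integral part I split via a middle point and use
\begin{equation}\nonumber
\bigl\vert\vert\dot{\tilde\gamma}_j^{(\eps)}\vert_{g(\tilde\gamma_j^{(\eps)})}-\vert\dot{\tilde\gamma}_j\vert_{g(\tilde\gamma_j)}\bigr\vert\leq\sqrt{\Lambda}\,\vert\dot{\tilde\gamma}_j^{(\eps)}-\dot{\tilde\gamma}_j\vert_{\mathrm{eucl}}+C\sqrt{\omega_\eps}\,\vert\dot{\tilde\gamma}_j\vert_{\mathrm{eucl}},
\end{equation}
where the first term comes from the reverse triangle inequality for the Riemannian norm at a fixed base point and the second from $\vert\sqrt{a}-\sqrt{b}\vert\leq\sqrt{\vert a-b\vert}$ together with the modulus of continuity $\omega_\eps\to 0$ of $(g_{\alpha\beta})$ evaluated at scale $\Vert\tilde\gamma_j^{(\eps)}-\tilde\gamma_j\Vert_\infty$. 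Integrating over $[t_{j-1},t_j]$ and summing over $j$ then gives $D_{ac}(\gamma^{(\eps)},\gamma)\to 0$.

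\emph{Main obstacle.} The only genuinely delicate point is the simultaneous control of the collar width $\delta_\eps$ against the mollification error: the derivative of $\lambda_\eps$ has size of order $1/\delta_\eps$ and multiplies the quantity $L_j-\tilde\gamma_j\ast\rho_\eps$ of size $O(\Vert\tilde\gamma_j\ast\rho_\eps-\tilde\gamma_j\Vert_\infty)+O(\delta_\eps)$ inside $\dot{\tilde\gamma}_j^{(\eps)}$, so $\delta_\eps$ must be chosen large compared to the mollification error while still tending to zero (for instance as its square root). With this balance in place, the $L^1$ convergence of the derivatives survives the endpoint correction and the remaining estimates reduce to standard smoothing theory in $W^{1,1}$.
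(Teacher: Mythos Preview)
Your argument is correct and follows essentially the same route as the paper's, which simply defers to Burtscher's mollification proof \cite[Thm.~3.11]{Burtscher} and adds only the one observation you also make: that componentwise convolution with a nonnegative mollifier (and hence your convex endpoint-correction) keeps the curve inside the half-space $\mathbb{R}^n_{x_1\geq 0}$. Your chart localization, mollification, endpoint-fixing via affine interpolation, and direct verification of both summands of $D_{ac}$ are precisely the ingredients of Burtscher's argument.
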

\begin{proof}
See \cite[Thm.~3.11]{Burtscher}.
The proof also applies in the case where $M$ has non-empty boundary.
The only thing to notice is the following:
If $\gamma\colon[0,1]\to\mathbb{R}^n_{x_1\geq 0}$ is a curve in the right half-plane, then so is the convolution $\rho\ast\gamma$ with a Friedrichs mollifier $\rho$, where the convolution is componentwise.
\end{proof}
\begin{remark}
Given an absolutely continuous curve $\gamma\colon[0,1]\to M$ and $\varepsilon>0$, we find a $\delta>0$ such that $\int_{s_1}^{s_2}\vert\dot{\gamma}\vert_g<\varepsilon$ for all $s_1,s_2\in[0,1]$ with $|s_1-s_2|<\delta$.
According to Burtscher, this is due to the fundamental theorem of calculus for absolutely continuous functions.
In fact, the statement holds by integrability of $\vert\dot{\gamma}\vert_g$ and the dominated convergence theorem.

However, the fundamental theorem of calculus can be used at another point:
If $f\colon\mathbb{R}\to\mathbb{R}$ is an absolutely continuous and bounded function, one can show that the convolution $f\ast\rho$ with a Friedrichs mollifier $\rho$ is smooth and that its derivative is given by $(f\ast\rho)'=f\ast\rho'$.

Then choose $a,b\in\mathbb{R}$ with $\supp\rho\subset[a,b]$ and let $s\in\mathbb{R}$.
By the fundamental theorem of calculus for absolutely continuous functions, we find
\begin{align}\nonumber
(f\ast&\rho')(s)-(f'\ast\rho)(s)\\\nonumber
&=\int_a^bf(s-\sigma)\rho'(\sigma)-f'(s-\sigma)\rho(\sigma)\;\mathrm{d}\sigma\\\nonumber
&=\int_a^b\frac{\mathrm{d}}{\mathrm{d}\sigma}(f(s-\sigma)\rho(\sigma))\;\mathrm{d}\sigma\\\nonumber
&=f(s-b)\rho(b)-f(s-a)\rho(a)=0
\end{align}
respectively $(f\ast\rho')(s)=(f'\ast\rho)(s)$.
Therefore $(f\ast\rho)'=f'\ast\rho$.
This fact is used in equation (3.5b) of \cite[Thm.~3.11]{Burtscher}.
\end{remark}
By Remark~\ref{lengthlipschitz} and Proposition~\ref{dense}, we obtain
\begin{corollary}\label{dacdg}
$d_{ac}=d_g$.
\end{corollary}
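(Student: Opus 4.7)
The plan is to establish the two inequalities separately, using the density statement together with the Lipschitz property of the length functional.

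First I would observe that one inequality is essentially tautological. Every piecewise smooth curve $\gamma\colon[0,1]\to M$ is absolutely continuous, since on each smooth piece it is even Lipschitz (after composing with a chart), so $\mathscr{A}_\infty^{p,q}\subset\mathscr{A}_{ac}^{p,q}$ for all $p,q\in M$. Taking the infimum of $\ell_g$ over the larger class can only decrease the value, which gives $d_{ac}(p,q)\leq d_g(p,q)$ immediately.

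For the reverse inequality I would fix $p,q\in M$ and an arbitrary $\gamma\in\mathscr{A}_{ac}^{p,q}$, and show $d_g(p,q)\leq\ell_g(\gamma)$; taking the infimum over $\gamma$ then yields $d_g(p,q)\leq d_{ac}(p,q)$. By Proposition~\ref{dense}, the subset $\mathscr{A}_\infty^{p,q}$ is dense in $(\mathscr{A}_{ac}^{p,q},D_{ac})$, so there exists a sequence $(\gamma_n)_{n\in\mathbb{N}}$ of piecewise smooth curves from $p$ to $q$ with $D_{ac}(\gamma_n,\gamma)\to 0$. By Remark~\ref{lengthlipschitz}, $|\ell_g(\gamma_n)-\ell_g(\gamma)|\leq D_{ac}(\gamma_n,\gamma)\to 0$, hence $\ell_g(\gamma_n)\to\ell_g(\gamma)$. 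Since each $\gamma_n$ is admissible for computing $d_g(p,q)$, we have $d_g(p,q)\leq\ell_g(\gamma_n)$ for every $n$, and passing to the limit gives $d_g(p,q)\leq\ell_g(\gamma)$ as desired.

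There is no real obstacle here; the statement is a direct consequence of combining Proposition~\ref{dense} and Remark~\ref{lengthlipschitz}, which is precisely why these were set up as preparatory results. The only point worth double-checking is that the density in Proposition~\ref{dense} is formulated with fixed endpoints, which it is, so the approximating smooth curves are themselves in $\mathscr{A}_\infty^{p,q}$ and are legitimate competitors in the definition of $d_g(p,q)$; without matching endpoints one could not conclude $d_g(p,q)\leq\ell_g(\gamma_n)$.
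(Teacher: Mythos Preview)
Your argument is correct and is precisely the intended one: the paper simply states that the corollary follows from Remark~\ref{lengthlipschitz} and Proposition~\ref{dense}, and you have spelled out exactly how those two ingredients combine.
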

The next step is to perform the reflection argument for absolutely continuous curves. 
From now on, $g$ is a doubling metric on a manifold $M$ with non-empty boundary.
Put $G:=g\cup g$ as a metric on $\mathsf{D}M^g$.
\begin{proposition}\label{Reflectionargument}
Let $I\subset\mathbb{R}$ be a closed interval and let $\gamma\colon I\to\mathsf{D}M^g$ be an absolutely continuous curve.
\begin{enumerate}
\item[(a)]{Then $\tilde{\gamma}\colon I\to M,\tilde{\gamma}:=\mathrm{pr}\circ\gamma$ is an absolutely continuous curve, where $\mathrm{pr}\colon\mathsf{D}M\to M$ denotes the projection map onto a fixed copy of $M$.}
\item[(b)]{It holds $\vert\dot{\gamma}(s)\vert_G=\vert\dot{\tilde{\gamma}}(s)\vert_g$ for all $s\in I$, where both $\gamma$ and $\tilde{\gamma}$ are differentiable.}
\end{enumerate}
\end{proposition}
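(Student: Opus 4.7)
The plan is to verify both assertions chart-by-chart on $\mathsf{D}M^g$, exploiting the explicit form of $G$ in a geodesic collar that comes from the doubling assumption. Away from $\partial M$, the projection $\mathrm{pr}$ is a local diffeomorphism between opens of $\mathsf{D}M^g$ and $M$ on which $G$ restricts to $g$, so there is nothing to do in interior charts. Near a boundary point $p_0\in\partial M$, I would use the geodesic collar $\phi_g\colon V\to U\subset M$ with $V\subset[0,\infty)\times\partial M$ together with its doubled extension $\tilde\phi\colon\tilde V\to\tilde U\subset\mathsf{D}M^g$, $\tilde V\subset\mathbb{R}\times\partial M$, which is a chart for the smooth structure of $\mathsf{D}M^g$. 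By the doubling characterization recalled in the excerpt, the family $g_\bullet$ extends evenly across $t=0$, so $\tilde\phi^\ast G=\mathrm{d}t^2+g_{|t|}$ is smooth, and in these coordinates the projection reads simply as $(t,p)\mapsto(|t|,p)$.

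For part (a), I would apply Lemma~\ref{oneatlas} to $\tilde\gamma=\mathrm{pr}\circ\gamma$. In any interior chart, composition with the local diffeomorphism $\mathrm{pr}$ preserves absolute continuity by exactly the Lipschitz transition argument used in the proof of Lemma~\ref{oneatlas}. In a collar chart around a boundary point, composing further with a chart on $\partial M$, $\gamma$ reads as a locally absolutely continuous pair $(t(s),p(s))$ with values in $\mathbb{R}\times\mathbb{R}^{n-1}$, and $\tilde\gamma$ becomes $(|t(s)|,p(s))$. Since $|\cdot|\colon\mathbb{R}\to\mathbb{R}$ is $1$-Lipschitz, $|t|$ remains locally absolutely continuous, and Lemma~\ref{oneatlas} then concludes.

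For part (b), fix $s_0$ where both $\gamma$ and $\tilde\gamma$ are differentiable. In the interior case, $\mathrm{pr}$ is a local isometry, so equality of speeds at $s_0$ is immediate. In the boundary case, write $\gamma(s)=(t(s),p(s))$ in the collar chart. Using the generalized Gauss lemma and the even extension $g_{-t}=g_t$, I would compute
\begin{equation}\nonumber
|\dot\gamma(s_0)|_G^2=\dot t(s_0)^2+g_{t(s_0)}(\dot p(s_0),\dot p(s_0)),\qquad |\dot{\tilde\gamma}(s_0)|_g^2=\bigl(\tfrac{\mathrm{d}}{\mathrm{d}s}|t(s_0)|\bigr)^{2}+g_{|t(s_0)|}(\dot p(s_0),\dot p(s_0)).
\end{equation}
When $t(s_0)\ne 0$ the absolute value is smooth there with derivative $\pm 1$, and $g_{|t(s_0)|}=g_{t(s_0)}$ gives equality. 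When $t(s_0)=0$, the one-sided difference quotients of $|t|$ at $s_0$ produced by the Taylor expansion $t(s_0+h)=\dot t(s_0)\,h+o(h)$ agree in the limit only if $\dot t(s_0)=0$; differentiability of $\tilde\gamma$ at $s_0$ therefore forces $\dot t(s_0)=\tfrac{\mathrm{d}}{\mathrm{d}s}|t(s_0)|=0$, and both expressions again coincide.

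The one subtlety, and the \emph{main obstacle}, is the case $t(s_0)=0$ in (b): the absolute value is non-smooth at the origin, so equality of speeds is not automatic from smoothness of coordinate changes. The hypothesis that both $\gamma$ and $\tilde\gamma$ are differentiable at $s_0$ is precisely what rules out $\dot t(s_0)\ne 0$ in that case, and it is the doubling condition $g_{-t}=g_t$ that makes the formula on the $\mathsf{D}M^g$-side symmetric enough for the reflection to preserve lengths.
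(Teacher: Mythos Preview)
Your proposal is correct and follows essentially the same approach as the paper: verify (a) via Lemma~\ref{oneatlas}, treating interior points trivially and boundary points via the collar representation $(t(s),p(s))\mapsto(|t(s)|,p(s))$, and verify (b) using the generalized Gauss lemma formulas for $G$ and $g$ in the collar. The only cosmetic difference is in the $t(s_0)=0$ subcase of (b): the paper argues $\dot t(s_0)=0$ by a two-case analysis on the sign behaviour of $t$ near $s_0$, whereas your first-order expansion argument is a slightly slicker route to the same conclusion.
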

\begin{proof}
The two copies of $M$ will be denoted by $[M,1]$ and $[M,2]$.
For the proof, let $\mathrm{pr}\colon\mathsf{D}M\to[M,1]$ be the projection onto the first copy.

\textit{Regarding (a):}
Let $p\in\tilde{\gamma}(I)$.
If $p\notin\dM$ we find a local chart $(\varphi,U)$ of $M$ around $p$ with $U\subset\mathrm{int}(M)$.
Let $[a,b]\subset\tilde{\gamma}^{-1}(U)$.
Since $\gamma|_{[a,b]}$ does not hit the boundary, it either holds $\im(\gamma|_{[a,b]})\subset[U,1]$ or $\im\gamma|_{[a,b]}\subset[U,2]$ due to continuity.
In any case, $\varphi\circ\tilde{\gamma}|_{[a,b]}=\varphi\circ\gamma|_{[a,b]}$ is absolutely continuous.

In the following we consider $p\in\dM$: The hypersurface $\dM\subset\mathsf{D}M$ has a geodesic tubular neighbourhood
\begin{equation}\nonumber
\varrho_G\colon V\to U^G\,,\,\varrho_G(t,p)=\exp_p^G(tN_g(p))
\end{equation}
where $V\subset\mathbb{R}\times\dM$ is an open neighbourhood of $\{0\}\times\dM$ and $N_g$ is the inward pointing unit normal vector field along the boundary of $[M,1]$ with respect to $g$.
Moreover, $V$ is supposed to be symmetric under reflection along $\{0\}\times\dM$.

We have $\varrho_G(V\cap\{t\geq 0\})\subset[M,1]$ and $\varrho_G(V\cap\{t\leq 0\})\subset[M,2]$, as well as
\begin{equation}\label{reflectionequation}
\varrho_G(t,p)=(\mathrm{pr}\circ\varrho_G)(-t,p)
\end{equation}
for all $(t,p)\in V$ with $t\geq 0$ by symmetry of $G$.
Put $U^g:=U^G\cap[M,1]$.
Then
\begin{equation}\nonumber
\varrho_g\colon V\cap\{t\geq 0\}\to U^g\,,\,\varrho_g(t,p):=\varrho_G(t,p)
\end{equation}
is a geodesic collar neighbourhood of $\dM$ in $[M,1]$, see Figure~\ref{Collarsntubes}.
It holds $\tilde{\gamma}^{-1}(U^g)=\gamma^{-1}(U^G)$.
\begin{figure}[h]
\begin{tikzpicture}
\node{\includegraphics[width=0.5\textwidth]{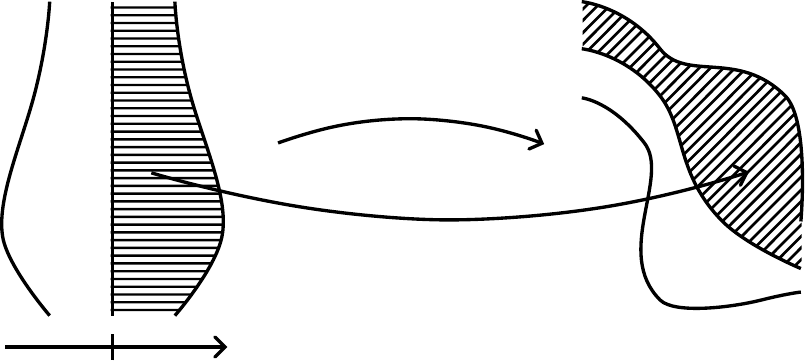}};
\node at (-2.86,-2) {$0$};
\node at (-1.85,-2) {$t$};
\node at (-2.82,2.2) {$V$};
\node at (3,2.2) {$U^G$};
\node at (1.35,1.32) {$\partial M$};
\node at (0.425,-0.7) {$\varrho_g$};
\node at (0.175,0.875) {$\varrho_G$};
\node at (3.7,1.375) {$U^g$};
\end{tikzpicture}
\caption{Tubular and collar neighbourhoods of $\partial M$}
\label{Collarsntubes}
\end{figure}

Within the tubular neighbourhood, $\gamma$ is given by
\begin{align}\nonumber
\varrho_G^{-1}\circ\gamma\colon&\gamma^{-1}(U^G)\to V\subset\mathbb{R}\times\dM,\\\nonumber
&(\varrho_G^{-1}\circ\gamma)(s)=(t(s),p(s))
\end{align}
for some continuous functions $t\colon\gamma^{-1}(U^G)\to\mathbb{R}$ and $p\colon\gamma^{-1}(U^G)\to\dM$.
We find
\begin{align}\nonumber
\varrho_g^{-1}\circ\tilde{\gamma}\colon&\gamma^{-1}(U^G)\to V\cap\{t\geq 0\},\\\nonumber
&(\varrho_g^{-1}\circ\tilde{\gamma}(s)=(|t(s)|,p(s))
\end{align}
for $\tilde{\gamma}$ in the geodesic collar, which is a consequence of equation~\eqref{reflectionequation}.

Now we give the actual proof.
For the sake of clarity, we will assume that $\dM$ is covered by a single chart without significant loss of generality.
Let $[a,b]\subset\gamma^{-1}(U^G)$ and $\varepsilon>0$.
Choose $\delta>0$ small enough so that the condition for absolute continuity of $\varrho_G^{-1}\circ\gamma|_{[a,b]}$ holds with $\varepsilon$ and $\delta$.

Let $m\in\mathbb{N}$ and let $\{(a_i,b_i)\}_{i=1}^m$ be a family of disjoint intervals with $[a_i,b_i]\subset[a,b]$ for all $1\leq i\leq m$ and total length $\sum_{i=1}^m|b_i-a_i|<\delta$.
Then we have
\begin{align}\nonumber
\sum_{i=1}^m&\Vert(\varrho_g^{-1}\circ\tilde{\gamma})(b_i)-(\varrho_g^{-1}\circ\tilde{\gamma})(a_i)\Vert\\\nonumber
&=\sum_{i=1}^m\Vert(|t(b_i)|,p(b_i))-(|t(a_i)|,p(a_i))\Vert\\\nonumber
&=\sum_{i=1}^m((|t(b_i)|-|t(a_i)|)^2+\Vert p(b_i)-p(a_i)\Vert^2)^{1/2}\\\nonumber
&\leq\sum_{i=1}^m(|t(b_i)-t(a_i)|^2+\Vert p(b_i)-p(a_i)\Vert^2)^{1/2}\\\nonumber
&=\sum_{i=1}^m\Vert(t(b_i),p(b_i))-(t(a_i),p(a_i))\Vert\\\nonumber
&=\sum_{i=1}^m\Vert(\varrho_G^{-1}\circ\gamma)(b_i)-(\varrho_G^{-1}\circ\gamma)(a_i)\Vert<\varepsilon
\end{align}
meaning that $\varrho_g^{-1}\circ\tilde{\gamma}|_{[a,b]}$ is absolutely continuous.
In total, $\tilde{\gamma}$ is absolutely continuous by Lemma~\ref{oneatlas}.

\textit{Regarding (b):}
Let $s_0\in I$ be a point where both $\gamma$ and $\tilde{\gamma}$ are differentiable.
According to Proposition~\ref{aedifferentiable} and part $(a)$, this is the case almost everywhere.

The claim $|\dot{\gamma}(s_0)|_G=|\dot{\tilde{\gamma}}(s_0)|_g$ is easy to prove if $\gamma(s_0)\notin\dM$ (cf. part $(a)$), so we consider $\gamma(s_0)\in\dM$.
Then it holds $t(s_0)=0$.
Since $\gamma$ and $\tilde{\gamma}$ are differentiable in $s_0$, so are $t,|t|$ and $p$.
By the generalised Gauss lemma, we have
\begin{alignat}{2}\nonumber
&\varrho_G^\ast G&&=\mathrm{d}t^2+G_t,\\\nonumber
&\varrho_g^\ast g&&=\mathrm{d}t^2+g_t
\end{alignat}
where $G_\bullet\colon V\to T^\ast\dM\otimes T^\ast\dM$ and $g_\bullet\colon V\cap\{t\geq 0\}\to T^\ast\dM\otimes T^\ast\dM$ coincide on $V\cap\{t\geq 0\}$.

It holds
\begin{align}\nonumber
&|\dot{\gamma}(s_0)|_G^2=\Bigl(\frac{\mathrm{d}}{\mathrm{d}s}\Bigr|_{s=s_0}t(s)\Bigr)^2+|\dot{p}(s_0)|_{g_0}^2,\\\nonumber
&|\dot{\tilde{\gamma}}(s_0)|_g^2=\Bigl(\frac{\mathrm{d}}{\mathrm{d}s}\Bigr|_{s=s_0}|t(s)|\Bigr)^2+|\dot{p}(s_0)|_{g_0}^2.
\end{align}
We show that $\mathrm{d}/\mathrm{d}s|_{s=s_0}t(s)=\mathrm{d}/\mathrm{d}s|_{s=s_0}|t(s)|=0$.

\textit{First case:}
There exists $\delta>0$ such that $t(s)\geq 0$ for all $s\in(s_0-\delta,s_0+\delta)$ or $t(s)\leq 0$ for all $s\in(s_0-\delta,s_0+\delta)$.
Assuming $t(s)\geq 0$ for all such $s$, the left derivative in $s_0$ is $\leq 0$ while the right derivative is $\geq 0$. 
Hence $\mathrm{d}/\mathrm{d}s|_{s=s_0}t(s)=0$.
Since $|t|$ coincides with $t$ on $(s_0-\delta,s_0+\delta)$, we also have $\mathrm{d}/\mathrm{d}s|_{s=s_0}|t(s)|=0$.

\textit{Second case:}
There is no such $\delta$ as in the first case.
We can pick a sequence $(s_k)_k$ with $s_k\to s_0$ so that $t(s_k)\geq 0$ for $k$ even and $t(s_k)\leq 0$ for $k$ odd.
For the even part, one has
\begin{equation}\nonumber
\frac{|t(s_{2k})|}{s_{2k}-s_0}=\frac{t(s_{2k})}{s_{2k}-s_0}\xrightarrow{k\to\infty}\frac{\mathrm{d}}{\mathrm{d}s}\Bigr|_{s=s_0}t(s)
\end{equation}
and for the odd part
\begin{equation}\nonumber
\frac{|t(s_{2k+1})|}{s_{2k+1}-s_0}=\frac{-t(s_{2k+1})}{s_{2k+1}-s_0}\xrightarrow{k\to\infty}-\frac{\mathrm{d}}{\mathrm{d}s}\Bigr|_{s=s_0}t(s).
\end{equation}
By differentiability of $|t|$, we conclude $\mathrm{d}/\mathrm{d}s|_{s=s_0}t(s)=-\mathrm{d}/\mathrm{d}s|_{s=s_0}t(s)$ or $\mathrm{d}/\mathrm{d}s|_{s=s_0}t(s)=0$.
Therefore,
\begin{equation}\nonumber
\frac{\mathrm{d}}{\mathrm{d}s}\Bigr|_{s=s_0}|t(s)|=\frac{\mathrm{d}}{\mathrm{d}s}\Bigr|_{s=s_0}t(s)=-\frac{\mathrm{d}}{\mathrm{d}s}\Bigr|_{s=s_0}t(s)=0.
\end{equation}
In either case, it holds $|\dot{\gamma}(s_0)|_G=|\dot{p}(s_0)|_{g_0}=|\dot{\tilde{\gamma}}(s_0)|_g$.
\end{proof}
\begin{remark}
The doubling property for $g$ is actually not needed in part $(a)$, since gluing together two copies of an appropriate collar neighbourhood of $\dM\subset M$ yields a tubular neighbourhood for $\dM$ in $\mathsf{D}M$.
This is by construction of the smooth structure on $\mathsf{D}M$.
\end{remark}

Now Proposition~\ref{samedistance} follows immediately from Corollary~\ref{dacdg} and Proposition~\ref{Reflectionargument}.
\begin{proof}[Proof of Theorem \ref{completemetrics}]
Let $(p_k)_k$ be a Cauchy sequence in $\mathsf{D}M$ with respect to $d_G$.
Without loss of generality, we can assume that all points $p_k,k\in\mathbb{N}$ are contained in a single copy of $M$.
If not, we select an appropriate subsequence.
Then $(p_k)_k$ converges by Proposition~\ref{samedistance} and the completeness of $g$.
\end{proof}
\section{The deformation principle}
Throughout this section we fix a smooth manifold $M$ of dimension $n\geq 2$ with non-empty (possibly non-compact) boundary.
For each Riemannian metric $g$ on $M$ we denote by
\begin{enumerate}
\item[\myicon]{$\scal_g\colon M\to\mathbb{R}$ its scalar curvature;}
\item[\myicon]{$g_0\in C^{\infty}(\partial M;T^\ast\partial M\otimes T^\ast\partial M)$ the metric induced on $\partial M$;}
\item[\myicon]{$N_g$ the inward pointing unit normal vector field along $\partial M$;}
\item[\myicon]{$\mathrm{II}_g$ the second fundamental form of $\partial M\subset M$ with respect to $N_g$;}
\item[\myicon]{$H_g=\frac{1}{n-1}\tr_{g_0}(\mathrm{II}_g)\colon\partial M\to\mathbb{R}$ the mean curvature of $\partial M$.}
\end{enumerate}
The space of Riemannian metrics on $M$ is denoted by $\mathscr{R}(M)$.
It is endowed with the weak $C^{\infty}$-topology.
Let $\sigma\colon M\to\mathbb{R}$ be a continuous function that also remains unchanged.
We investigate metrics in the subspace $\mathscr{R}_{>\sigma}(M):=\{g\in\mathscr{R}(M):\scal_g>\sigma\}$, where the topology is inherited from $\mathscr{R}(M)$.

The following refined versions of geodesic collar neighbourhoods are important for the deformation principle:
Given a metric $g$ on $M$ and a neighbourhood $\mathscr{U}\subset M$ of $\partial M$, there exists a continuous positive function $\eta\colon\partial M\to\mathbb{R}$ such that $U^g=U^g_\eta$ is the diffeomorphic image of
\begin{equation}\nonumber
V_\eta:=\{(t,p)\in[0,\infty)\times\partial M:t<\eta(p)\}
\end{equation}
under the normal exponential map, i.e.
\begin{equation}\nonumber
\varrho_g\colon V_\eta\to U^g_\eta\,,\,\varrho_g(t,p)=\exp_p(tN_g(p))
\end{equation}
is a geodesic collar neighbourhood.
We can arrange $U^g_\eta\subset\mathscr{U}$ by choosing $\eta$ small enough.
Working in families of metrics is more complicated because the neighbourhoods $V_\eta$ and $U^g_\eta$ may differ between members.
However, it is possible to fix either the domain or the target in the following sense:
\begin{factC1}\label{C1}
Let $K$ be a compact Hausdorff space and let $g\colon K\to\mathscr{R}(M)$ be a continuous family of Riemannian metrics.
For each neighbourhood $\mathscr{U}$ of $\partial M$ there exists a continuous positive function $\eta\colon\partial M\to\mathbb{R}$ such that
\begin{enumerate}
\item[\myicon]{$\varrho_{g(\xi)}\colon V_\eta\to U^{g(\xi)}_\eta$ is a collar neighbourhood for each $\xi\in K$;}
\item[\myicon]{$\varrho\colon K\to C^{\infty}(V_\eta;M),\xi\mapsto\varrho_{g(\xi)}$ is continuous;}
\item[\myicon]{$U^{g(\xi)}_\eta\subset\mathscr{U}$ for all $\xi\in K$.}
\end{enumerate}
\end{factC1}
\begin{factC2}
Let $K$ be a compact Hausdorff space and let $g\colon K\to\mathscr{R}(M)$ be a continuous family of Riemannian metrics.
Let $\eta\colon\partial M\to\mathbb{R}$ be a continuous positive function such that $\varrho_{g(\xi)}\colon V_\eta\to U^{g(\xi)}_\eta$ is a geodesic collar neighbourhood for all $\xi\in K$.
Then there exists a neighbourhood $U\subset M$ of $\partial M$ with the following properties:
\begin{enumerate}
\item[\myicon]{$U\subset U^{g(\xi)}_\eta$ for all $\xi\in K$;}
\item[\myicon]{$\varrho^{-1}\colon K\to C^{\infty}(U;V_\eta),\xi\mapsto\varrho_{g(\xi)}^{-1}$ is continuous.}
\end{enumerate}
\end{factC2}
As with $\mathscr{R}(M)$, we equip all function spaces with the weak $C^\infty$-topology.
Facts CN1 and CN2 are proven in \cite[Satz 2.14 and Prop.~2.15]{Frerichs}.

Let $I\subset\mathbb{R}$ be an interval with $[0,1]\subset I$.
A continuous family of \textit{time-dependent Riemannian metrics} is a continuous map $g\colon K\to C^{\infty}(M\times I;T^\ast M\otimes T^\ast M)$ with $g(\xi)(\bullet,s)$ beeing a Riemannian metric for each $\xi\in K$ and $s\in I$.
We require $I$ to be open as well as $g(\xi)(p,s)=g(\xi)(p,0)$ for all $s\leq 0$ and $g(\xi)(p,s)=g(\xi)(p,1)$ for all $s\geq 1$.
This condition is not inalienable, but for avoiding corners.

There is a need for adapted versions of CN1 and CN2 with families of time-dependent metrics.
As an abbreviation, we will denote $\varrho_{\xi,s}:=\varrho_{g(\xi)(\bullet,s)}$ and $U_\eta^{\xi,s}:=U_\eta^{g(\xi)(\bullet,s)}$.
\begin{factC1'}
Let $K$ be a compact Hausdorff space and let $g\colon K\to C^{\infty}(M\times I;T^\ast M\otimes T^\ast M)$ be a continuous family of time-dependent Riemannian metrics.
For each neighbourhood $\mathscr{U}$ of $\partial M$ there exists a continuous positive function $\eta\colon\partial M\to\mathbb{R}$ such that
\begin{enumerate}
\item[\myicon]{$\varrho_{\xi,s}\colon V_\eta\to U_\eta^{\xi,s}$ is a collar neighbourhood for each $\xi\in K$ and $s\in I$;}
\item[\myicon]{$\varrho\colon K\to C^{\infty}(V_\eta\times I;M),\xi\mapsto[(t,p,s)\mapsto\varrho_{\xi,s}(t,p)]$ is well-defined and continuous;}
\item[\myicon]{$U_\eta^{\xi,s}\subset\mathscr{U}$ for all $\xi\in K$ and $s\in I$.}
\end{enumerate}
\end{factC1'}
\begin{factC2'}
Let $K$ be a compact Hausdorff space and let $g\colon K\to C^{\infty}(M\times I;T^\ast M\otimes T^\ast M)$ be a continuous family of time-dependent Riemannian metrics.
Let $\eta\colon\partial M\to\mathbb{R}$ be a continuous positive function such that $\varrho_{\xi,s}\colon V_\eta\to U_\eta^{\xi,s}$ is a geodesic collar neighbourhood for all $\xi\in K$ and $s\in I$.
Then there exists a neighbourhood $U\subset M$ of $\partial M$ with the following properties:
\begin{enumerate}
\item[\myicon]{$U\subset U^{\xi,s}$ for all $\xi\in K$ and $s\in I$;}
\item[\myicon]{$\varrho^{-1}\colon K\to C^{\infty}(U\times I;V_\eta),\xi\mapsto[(p,s)\mapsto\varrho_{\xi,s}^{-1}(p)]$ is well-defined and continuous.}
\end{enumerate}
\end{factC2'}
To say $\varrho$ is well-defined means that the assignment $(t,p,s)\mapsto \varrho_{\xi,s}(t,p)$ is jointly smooth with respect to the point $(t,p)\in V_\eta$ and the parameter $s\in I$ (similar with $\varrho^{-1}$).
Facts CN1' and CN2' can be deduced from \cite[Satz 2.14 and Prop.~2.15]{Frerichs} using autonomization techniques from ODE theory.

\subsection{Uniformisation for geodesic collar neighbourhoods}
To begin the deformation process, it is useful to standardise the normal exponential maps of a given continuous family $g\colon K\to\mathscr{R}(M)$.
Uniformisation is even necessary for some applications, see Corollary~\ref{fromHKRS08}.
The process concludes with Corollary~\ref{uniformization}.
\begin{lemma}\label{cover1}
Let $r>0$.
There exists a complete Riemannian metric $m\in\mathscr{R}(M)$ and an open cover $(U_i)_{i\in I}$ such that each $U_i$ is relatively compact and that
\begin{equation}\nonumber
I_i:=\{j\in I:\overline{B}_m(U_i,r)\cap\overline{B}_m(U_j,r)\neq\varnothing\}
\end{equation}
is finite for every $i\in I$.
Here $\overline{B}_m(U_i,r)=\{p\in M:d_m(p,U_i)\leq r\}$ denotes the closed $r$-neighbourhood around $U_i$ with respect to $m$.
\end{lemma}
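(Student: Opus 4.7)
The plan is to construct the cover from concentric shells of a complete Riemannian metric. First I would produce a complete Riemannian metric $m\in\mathscr{R}(M)$; its existence on any smooth paracompact manifold, even with non-empty boundary, is the classical Nomizu-Ozeki theorem, obtained by conformally rescaling an arbitrary metric by a sufficiently large exhaustion function. After fixing a basepoint $p_0$ in each connected component of $M$ (of which there are at most countably many by second countability), I consider $f(p):=d_m(p,p_0)$. Since $(M,d_m)$ is a locally compact, metrically complete length space, a Hopf-Rinow-type argument ensures that every closed $d_m$-bounded set is compact; in particular $f$ is proper on each component.

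Next I would fix $R:=4r$ and introduce, on a given component, the compact shells
\[
A_k:=f^{-1}\bigl([kR,(k+1)R]\bigr),\qquad k=0,1,2,\dots,
\]
together with the slightly larger open shells $B_k:=f^{-1}\bigl((kR-r/2,(k+1)R+r/2)\bigr)$, each of which has compact closure. For every $k$ I cover $A_k$ by finitely many open sets $U_{k,1},\dots,U_{k,n_k}\subset B_k$, which are therefore relatively compact. Concatenating these families over $k\geq 0$ and over all components yields an open cover $(U_i)_{i\in I}$ of $M$ by relatively compact sets.

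Finally I would verify the local finiteness condition on the $r$-fattenings. The triangle inequality applied to $f$ gives
\[
\overline{B}_m(U_{k,\alpha},r)\subset f^{-1}\bigl((kR-3r/2,(k+1)R+3r/2)\bigr),
\]
so $\overline{B}_m(U_{k,\alpha},r)\cap\overline{B}_m(U_{l,\beta},r)\neq\varnothing$ forces the open intervals $(kR-3r/2,(k+1)R+3r/2)$ and $(lR-3r/2,(l+1)R+3r/2)$ to meet; members sitting in different connected components never contribute, since their $d_m$-distance is infinite. With $R=4r$ the overlap condition implies $|k-l|\leq 1$, so only $U_{k,\alpha}$ from at most three consecutive shells inside the component of $U_i$ can appear in $I_i$, and each such shell supplies only finitely many sets.

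I do not expect a serious obstacle. The only point requiring some care is ensuring that closed bounded subsets of $(M,d_m)$ are compact even though $M$ has boundary, but the standard Hopf-Rinow argument for length spaces uses only local compactness and metric completeness, both of which are at hand, and so transfers verbatim. The remainder of the proof is a bookkeeping exercise with the distance function.
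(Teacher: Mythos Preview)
Your argument is correct, but the paper takes a different, more extrinsic route. Instead of invoking Nomizu--Ozeki and building concentric shells, the author chooses a proper smooth embedding $\Phi\colon M\to\mathbb{R}^N$, sets $m:=\Phi^\ast g_{\mathrm{eucl}}$, and lets $U_i:=\Phi^{-1}(B(i,\sqrt{N}))$ for $i\in\mathbb{Z}^N$. Completeness of $m$ follows because $\Phi(M)$ is closed; relative compactness of $U_i$ follows from properness of $\Phi$; and the finiteness of $I_i$ reduces to the obvious fact that Euclidean balls $\overline{B}(i,\sqrt{N}+r)$ centred at lattice points meet only in finite families. The trade-off is this: the paper's construction avoids both Nomizu--Ozeki and the Hopf--Rinow step for length spaces within the proof itself (everything is read off from the Euclidean picture), at the cost of importing the Whitney embedding theorem for manifolds with boundary. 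Your intrinsic shell construction is equally valid and perhaps more self-contained once one accepts Hopf--Rinow for locally compact complete length spaces---a fact the paper does invoke anyway in the paragraph following the lemma.
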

\begin{proof}
We choose a proper smooth embedding $\Phi\colon M\to\mathbb{R}^N$ of $M$ into some sufficiently large Euclidean space $\mathbb{R}^N$.
The sets
\begin{equation}\nonumber
U_i:=\Phi^{-1}(B(i,\sqrt{N})),\;i\in I:=\mathbb{Z}^N
\end{equation}
form an open cover of $M$, where $B(i,\sqrt{N})$ denotes the open Euclidean ball of radius $\sqrt{N}$ around $i\in I$.
Each fixed $U_i$ intersects only a finite number of other $U_j,j\in I$.
The sets are relatively compact, since every closure $\cl(U_i)\subset\Phi^{-1}(\overline{B}(i,\sqrt{N}))$ is contained in a compact neighbourhood.

We set
\begin{equation}\nonumber
m:=\Phi^\ast g_{\mathrm{eucl}}
\end{equation}
where $g_{\mathrm{eucl}}$ denotes the Euclidean metric in $\mathbb{R}^N$.
The metric $m$ is complete because $\Phi(M)\subset\mathbb{R}^N$ is closed.
It holds $\Phi(\overline{B}_m(U_i,r))\subset\overline{B}(i,\sqrt{N}+r)$ for every $i\in I$.
Since the balls $\overline{B}(i,\sqrt{N}+r)$ meet only in finite families, the same is true for the neighbourhoods $\overline{B}_m(U_i,r)$.
\end{proof}

We choose $r=1$ and fix a background metric $m\in\mathscr{R}(M)$ together with an open cover $(U_i)_{i\in I}$ of $M$ as in the lemma.
We would like to mention that the balls $\overline{B}_m(U_i,1)$ are compact due to the Hopf-Rinow theorem, see \cite[Prop.~2.5.22 and Thm.~2.5.28]{BBI}.
This is applicable since Riemannian manifolds with boundary are length spaces, see e.g. \cite[Prop.~3.18]{BrH}.

Suppose there is a distinguished metric $g(\xi_0),\xi_0\in K$ in a given family $g\colon K\to\mathscr{R}(M)$.
The starting point for the uniformisation process is to join every metric $g(\xi),\xi\in K$ with the metric $g(\xi_0)$ by a path of metrics.

Let $I\subset\mathbb{R}$ be an open interval with $[0,1]\subset I$ and let $\rho\colon\mathbb{R}\to[0,1]$ be a smooth function with
\begin{enumerate}
\item[\myicon]{$\rho(s)=0$ for all $s\leq 0$;}
\item[\myicon]{$\rho(s)=1$ for all $s\geq 1$;}
\item[\myicon]{$\supp\rho'\subset(0,1)$.}
\end{enumerate}
Let $\mathscr{U}$ be a neighbourhood of $\dM$. We consider the family of time-dependent Riemannian metrics given by $(1-\rho(s))g(\xi_0)+\rho(s)g(\xi)$ for $\xi\in K$ and $s\in I$.

By Fact CN1', there exists an associated family of geodesic collar neighbourhoods $\varrho_{\xi,s}\colon V\to U^{\xi,s}$ that satisfy the inclusion $U^{\xi,s}\subset\mathscr{U}$ for all $\xi\in K$ and $s\in I$.
We drop the subscript $\eta$ at this point.

Put
\begin{align}\nonumber
\omega\colon&K\to C^{\infty}(U^{g(\xi_0)}\times I;M),\\\nonumber
&\omega(\xi)(p,s)=(\varrho_{\xi,s}\circ\varrho_{g(\xi_0)}^{-1})(p).
\end{align}
This map is continuous due to Fact CN1'.

Our proofs will utilise time-dependent vector fields.
Similar to time-dependent metrics, a \textit{time-dependent vector field} $V\in C^{\infty}(M\times I;TM)$ is supposed to satisfy $V(\xi)(p,s)=V(\xi)(p,0)$ for all $s\leq 0$ and $V(\xi)(p,s)=V(\xi)(p,1)$ for all $s\geq 1$.
We further require $V(\dM\times I)\subset T(\dM)$.

The vector field is said to have \emph{bounded velocity} if there exists a complete Riemannian metric $g\in\mathscr{R}(M)$ and a constant $L\geq 0$ such that $|V(p,s)|_g\leq L$ for all $p\in M$ and $s\in I$, cf. \cite[Ch.~8]{Hirsch}.
\begin{lemma}\label{vector field}
Let $(K,\xi_0)$ be a compact pointed Hausdorff space and let $g\colon K\to\mathscr{R}(M)$ be a continuous family of Riemannian metrics.
Let $\delta\colon M\to\mathbb{R}$ be a continuous positive function.

For each neighbourhood $\mathscr{U}$ of $\dM$, there exists a smaller neighbourhood $\dM\subset U\subset\mathscr{U}$ and a continuous family
\begin{equation}\nonumber
V\colon K\to C^{\infty}(M\times I;TM)
\end{equation}
of time-dependent vector fields such that the following holds for all $\xi\in K$ and $s\in I$:
\begin{enumerate}
\item[(a)]{$V(\xi)(p,s)=0$ for all $p\in\partial M$;}
\item[(b)]{$V(\xi_0)(p,s)=0$ for all $p\in M$;}
\item[(c)]{The curve $\omega(\xi)(p,\bullet)\colon I\to M$ is a solution to the IVP
\begin{align}\nonumber
&\dot{\gamma}(s)=V(\xi)(\gamma(s),s)\\\nonumber
&\gamma(0)=p
\end{align}
for all $p\in U$;}
\item[(d)]{$V(\xi)(p,s)=0$ for all $p\in M\setminus\cl(\mathscr{U})$;}
\item[(e)]{$|V(\xi)(p,s)|_m<\delta(p)$ for all $p\in M$;}
\item[(f)]{$|V(\xi)(p,s)|_{g(\xi)}<\delta(p)$ for all $p\in M$.}
\end{enumerate}
\end{lemma}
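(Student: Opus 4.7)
The plan is to realize each curve $\omega(\xi)(p,\bullet)$ as an integral curve of a natural time-dependent vector field $\tilde V(\xi)$ defined on a collar of $\partial M$, and then to truncate this field globally by a bump function $\chi$ chosen so that trajectories starting in a small neighbourhood $U$ of $\partial M$ remain inside $\{\chi\equiv 1\}$, where $V$ and $\tilde V$ coincide.

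First I apply Fact~C1' to the continuous family of time-dependent metrics $h(\xi)(p,s):=(1-\rho(s))g(\xi_0)(p)+\rho(s)g(\xi)(p)$ together with a relatively compact shrinking $\mathscr{U}_0\subset\mathscr{U}$, producing a continuous positive $\eta_1\colon\partial M\to\mathbb{R}$ and a jointly smooth, continuously varying family of geodesic collars $\phi_{\xi,s}\colon V_{\eta_1}\to U_{\eta_1}^{\xi,s}\subset\mathscr{U}_0$. Fact~C2' then furnishes a neighbourhood $U_1$ of $\partial M$ contained in every $U_{\eta_1}^{\xi,s}$ on which $\phi^{-1}$ depends continuously on $\xi$. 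On $U_1\times I$ I set
\[
\tilde V(\xi)(q,s):=\left.\tfrac{\partial}{\partial u}\right|_{u=s}\phi_{\xi,u}\bigl(\phi_{\xi,s}^{-1}(q)\bigr),
\]
which is smooth on $U_1\times I$ and continuous in $\xi\in K$. By construction, the curve $u\mapsto\phi_{\xi,u}(t,x)$ is an integral curve of $\tilde V(\xi)$, and since $\phi_{\xi,0}=\phi_{g(\xi_0)}$ it coincides with $\omega(\xi)(\phi_{g(\xi_0)}(t,x),\bullet)$. Because $\phi_{\xi,u}(0,x)=x$ independently of $(\xi,u)$, $\tilde V$ vanishes on $\partial M$, giving (a); and because $h(\xi_0)$ is $s$-independent, $\phi_{\xi_0,u}$ does not depend on $u$, so $\tilde V(\xi_0)\equiv 0$, giving (b).

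The Taylor expansion $\phi_{\xi,u}(t,x)=x+t\,N_{h(\xi)(\bullet,u)}(x)+O(t^2)$ yields $|\tilde V(\xi)(q,s)|_g=O(\mathrm{dist}_g(q,\partial M))$ as $q\to\partial M$, uniformly in $(\xi,s)\in K\times I$ by compactness of $K$ and the $s$-constancy of $h$ outside $[0,1]$. Applied to $g=m$ and to $g=g(\xi)$, this lets me shrink $\eta_1$ to a continuous $\eta\leq\eta_1$ so that (e) and (f) hold on $U_\eta^{g(\xi_0)}$. I then pick a smaller neighbourhood $U\subset U_\eta^{g(\xi_0)}$ of $\partial M$ with $\omega(\xi)(U,s)\subset U_\eta^{g(\xi_0)}$ for every $(\xi,s)\in K\times I$ — possible because $\omega(\xi_0)$ is the identity and $\omega$ is continuous — and choose a smooth cutoff $\chi\colon M\to[0,1]$ with $\chi\equiv 1$ on a neighbourhood of $\bigcup_{\xi,s}\omega(\xi)(U,s)$ and $\supp\chi\subset U_\eta^{g(\xi_0)}\cap\mathscr{U}$. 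Setting $V(\xi)(q,s):=\chi(q)\,\tilde V(\xi)(q,s)$, extended by zero outside $U_1$, properties (a), (b), (d) are immediate, (e) and (f) follow from the Taylor bound and $0\leq\chi\leq 1$, and (c) holds because trajectories starting in $U$ remain in $\{\chi\equiv 1\}$, where $V=\tilde V$.

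The main obstacle I anticipate is the careful coordination of the nested neighbourhoods — in particular, showing that the cutoff $\chi$ can be arranged with $\chi\equiv 1$ on every trajectory $\omega(\xi)(U,\bullet)$ simultaneously while keeping $\supp\chi\subset U_\eta^{g(\xi_0)}\cap\mathscr{U}$ — together with promoting the pointwise Taylor bounds into a uniform choice of the continuous function $\eta$; both points hinge on the compactness of $K$ and on the $s$-constancy of the family outside $[0,1]$.
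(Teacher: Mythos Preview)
Your approach is essentially the paper's: your $\tilde V$ is exactly the paper's local field $v(\xi)(p,s)=\partial_u|_{u=s}\,\phi_{\xi,u}(\phi_{\xi,s}^{-1}(p))$, and both proofs globalize via a cutoff supported in a collar on which the $\delta$-bounds hold. One small correction to your justification for the existence of $U$: what makes $\omega(\xi)(U,s)\subset U_\eta^{g(\xi_0)}$ achievable for \emph{every} $\xi$ is not that $\omega(\xi_0)$ is the identity (continuity near $\xi_0$ says nothing about $\xi$ far from $\xi_0$), but that $\omega(\xi)|_{\partial M}=\mathrm{id}_{\partial M}$ for all $\xi$; the paper implements this cleanly by first fixing nested neighbourhoods $\mathrm{cl}(U'')\subset U'$ with the cutoff between them, and only then choosing a collar domain $V''$ with $\phi_{\xi,s}(V'')\subset U''$ for all $(\xi,s)$ and setting $U:=\phi_{g(\xi_0)}(V'')$, which resolves precisely the coordination issue you flag as the main obstacle.
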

\begin{proof}
Let $\mathscr{U}$ be a neighbourhood of $\dM$.
We will start from the point before the lemma:
There exist neighbourhoods $\dM\subset U''\subset U'\subset M$ with $\cl(U'')\subset U'\subset U^{\xi,s}$ for all $\xi\in K$ and $s\in I$, as can be seen from Fact CN1',CN2' and the later Lemma~\ref{lemma3.6}.
Then
\begin{align}\nonumber
v\colon&K\to C^{\infty}(U'\times I;TM),\\\nonumber
&v(\xi)(p,s)=\frac{\partial\omega(\xi)}{\partial s}((\varrho_{g(\xi_0)}\circ\varrho_{\xi,s}^{-1})(p),s)
\end{align}
is a local family of time-dependent vector fields.
For each point $p\in\partial M$, it holds $\omega(\xi)(p,s)=p$ for all $\xi\in K$ and $s\in I$ so that $v\equiv 0$ on $\partial M$.
By continuity and compactness of $K$, we can assume that
\begin{alignat}{2}\nonumber
&|v(\xi)(p,s)|_m&&<\delta(p),\\\nonumber
&|v(\xi)(p,s)|_{g(\xi)}&&<\delta(p)
\end{alignat}
for all $p\in U',\xi\in K$ and $s\in I$.
This is without loss of generality.

Next, let $V''\subset[0,\infty)\times\dM$ be a neighbourhood of $\dM$ such that $\varrho_{\xi,s}(V'')\subset U''$ for all $\xi\in K$ and $s\in I$.
Put $U:=\varrho_{g(\xi_0)}(V'')$.
We then obtain the differential equation
\begin{equation}\nonumber
\frac{\partial\omega(\xi)}{\partial s}(p,s)=v(\xi)(\omega(\xi)(p,s),s)
\end{equation}
for all $p\in U,\xi\in K$ and $s\in I$.
In order to globalise the construction, let $\chi\colon M\to[0,1]$ be a smooth bumping function with
\begin{enumerate}
\item[\myicon]{$\chi=1$ on $U''$;}
\item[\myicon]{$\supp\chi\subset U'$.}
\end{enumerate}
Define a global time-dependent vector field by
\begin{align}\nonumber
V\colon&K\to C^{\infty}(M\times I;TM),\\\nonumber
&V(\xi)(p,s)=\chi(p)\cdot v(\xi)(p,s).
\end{align}
It can be verified that $V$ satisfies properties $(a)-(f)$.
We present $(c)$:

It holds $\omega(\xi)(p,s)\in U''$ for all $p\in U,\xi\in K$ and $s\in I$ by construction.
We further know that $\omega$ satisfies the IVP
\begin{align}\nonumber
\frac{\partial\omega(\xi)}{\partial s}(p,s)&=v(\xi)(\omega(\xi)(p,s),s),\\\nonumber
\omega(\xi)(p,0)&=p.
\end{align}
Since $V$ and $v$ coincide on $U''$, the curves $\omega(\xi)(p,\bullet)\colon I\to M$ are solutions to $(c)$.
\end{proof}
\begin{proposition}\label{diff}
Let $(K,\xi_0)$ be a compact pointed Hausdorff space and let $g\colon K\to\mathscr{R}(M)$ be a continuous family of Riemannian metrics.
Let $\delta\colon M\to\mathbb{R}$ be a continuous positive function.

For each neighbourhood $\mathscr{U}$ of $\dM$, there exists a smaller neighbourhood $\dM\subset U\subset\mathscr{U}$ and a continuous map
\begin{equation}\nonumber
\Omega\colon K\times[0,1]\to\Diff(M)
\end{equation}
such that the following holds for all $\xi\in K$ and $s\in I$:
\begin{enumerate}
\item[(a)]{$\Omega(\xi,0)=\id_M$;}
\item[(b)]{$\Omega(\xi_0,s)=\id_M$;}
\item[(c)]{$\Omega(\xi,1)=\varrho_{g(\xi)}\circ\varrho_{g(\xi_0)}^{-1}$ on U;}
\item[(d)]{$\Omega(\xi,s)=\id$\quad and\quad$\mathrm{d}\Omega(\xi,s)=\id$ on $M\setminus\mathscr{U}$;}
\item[(e)]{$d_m(\Omega(\xi,s)(p),p)<\delta(p)$ for all $p\in M$;}
\item[(f)]{$d_{g(\xi)}(\Omega(\xi,s)(p),p)<\delta(p)$ for all $p\in M$.}
\end{enumerate}
\end{proposition}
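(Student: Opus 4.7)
My approach is to realise $\Omega$ as the flow of the time-dependent vector field $V$ supplied by Lemma~\ref{vector field}. I would first replace the input $\mathscr{U}$ by a slightly smaller open neighbourhood $\mathscr{U}'\supset\dM$ with $\cl(\mathscr{U}')\subset\mathscr{U}$ (possible by normality of $M$), and shrink the input function: choose a continuous $\tilde\delta\colon M\to(0,\infty)$ with $\tilde\delta\le 1$ and such that $d_m(p,q)<\delta(p)$ or $d_{g(\xi)}(p,q)<\delta(p)$ (the latter uniformly in $\xi\in K$) forces $\tilde\delta(q)<\tfrac{1}{2}\delta(p)$; this is possible by continuity of $\delta$ together with compactness of $K$. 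Applying Lemma~\ref{vector field} with data $\mathscr{U}'$ and $\tilde\delta$ then yields a neighbourhood $U\subset\mathscr{U}$ of $\dM$ and a continuous family $V\colon K\to C^{\infty}(M\times I;TM)$ whose support is contained in $\cl(\mathscr{U}')\subset\mathscr{U}$.

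Since $V(\xi)$ is tangent to $\dM$, vanishes on $\dM$, and has $m$-velocity bounded by $1$ in the complete metric $m$, standard ODE theory on manifolds (cf.\ \cite{Hirsch}) provides a global flow. I set $\Omega(\xi,s)(p):=\gamma_{\xi,p}(s)$, where $\gamma_{\xi,p}$ solves $\dot\gamma(s)=V(\xi)(\gamma(s),s)$ with $\gamma(0)=p$. Smooth dependence on initial data makes each $\Omega(\xi,s)$ a diffeomorphism of $M$, while smooth dependence on parameters (after autonomising $s$, as in the derivation of Facts C1',\,C2') together with compactness of $K$ yields continuity of $\Omega\colon K\times[0,1]\to\Diff(M)$ in the weak $C^{\infty}$-topology.

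Properties (a)--(d) should then be essentially formal: (a) is the initial condition of the IVP; (b) follows from $V(\xi_0)\equiv 0$ by Lemma~\ref{vector field}(b); for (c), Lemma~\ref{vector field}(c) says $\omega(\xi)(p,\bullet)$ satisfies the same IVP as $\Omega(\xi,\bullet)(p)$ on $U$, so the two coincide, with value $\phi_{g(\xi)}\circ\phi_{g(\xi_0)}^{-1}(p)$ at $s=1$; and (d) follows because $V$ and hence its flow are trivial outside $\cl(\mathscr{U}')\subset\mathscr{U}$.

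I expect the main technical point to be the displacement estimates (e) and (f). Integrating $|V(\xi)(\cdot,\tau)|_m<\tilde\delta$ along $\Omega(\xi,\bullet)(p)$ yields only
\[
d_m(\Omega(\xi,s)(p),p)\le\int_0^s\tilde\delta(\Omega(\xi,\tau)(p))\,\mathrm{d}\tau,
\]
an integral along the unknown trajectory. The shrinking of $\delta$ to $\tilde\delta$ at the outset is designed to close a bootstrap: as long as $\Omega(\xi,\tau)(p)$ stays in the open $m$-ball of radius $\delta(p)$ around $p$, the integrand is strictly less than $\tfrac{1}{2}\delta(p)$, so the integral stays strictly below $\tfrac{1}{2}\delta(p)$. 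Starting from $\Omega(\xi,0)(p)=p$ and invoking continuity of $s\mapsto\Omega(\xi,s)(p)$, a standard connectedness argument on $[0,1]$ forces the trajectory never to reach $m$-distance $\delta(p)$, giving (e). Estimate (f) is proved identically with $g(\xi)$ in place of $m$, using $|V|_{g(\xi)}<\tilde\delta$ from Lemma~\ref{vector field}(f).
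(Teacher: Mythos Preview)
Your overall strategy matches the paper's: both obtain $\Omega$ as the flow of the vector field from Lemma~\ref{vector field}, after which (a)--(d) are formal. The difference is in (e) and (f). The paper works piecewise over the locally finite cover $(U_i)$ from Lemma~\ref{cover1}: it records uniform-continuity moduli $\tilde\delta_i$ for $|V|_m$ and $|V|_{g(\xi)}$ on each compactum $\overline{B}_m(U_i,1)$, introduces a second cutoff to force $|V_\Omega|_m<\min\{\tilde\delta,1\}$ globally, and then bounds the $m$-displacement in three successive passes (first $\le 1$ so the trajectory stays in $\overline{B}_m(U_i,1)$, then $<\tilde\delta_i$, then $<\delta(p)$), transferring to $g(\xi)$ via the uniform-continuity estimate. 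Your single bootstrap is cleaner, but the work is hidden in the construction of $\tilde\delta$.

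That construction is where you are too quick. The $m$-half of your condition is unproblematic since $m$ is complete: $d_m(p_n,q)<\delta(p_n)\to 0$ forces $p_n\to q$, hence $\delta(p_n)\to\delta(q)>0$, so $\inf\{\delta(p):d_m(p,q)<\delta(p)\}$ is positive and locally bounded below, and a positive continuous $\tilde\delta$ below half of it exists. The $g(\xi)$-half is more delicate because the $g(\xi)$ are not assumed complete; showing that $d_{g(\xi_n)}(p_n,q)\to 0$ with varying $\xi_n$ still forces $p_n\to q$ requires the extra observation that, by continuity of $g$ in the weak $C^\infty$-topology and compactness of $K$, all $g(\xi)$ are uniformly comparable on any fixed compact neighbourhood of $q$, so short $g(\xi)$-paths from $q$ cannot escape it. This is true but is not ``continuity of $\delta$ together with compactness of $K$''. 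There is an easier fix: drop the $g(\xi)$-condition on $\tilde\delta$ entirely. Once (e) gives $\Omega(\xi,\tau)(p)\in B_m(p,\delta(p))$, the $m$-condition alone yields $\tilde\delta(\Omega(\xi,\tau)(p))<\tfrac12\delta(p)$; integrating the bound $|V|_{g(\xi)}<\tilde\delta$ from Lemma~\ref{vector field}(f) along the trajectory then gives $d_{g(\xi)}(\Omega(\xi,s)(p),p)<\tfrac12\delta(p)$ directly, with no second bootstrap.
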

\begin{proof}
Let $\mathscr{U}$ be a neighbourhood of $\dM$.
We apply Lemma~\ref{vector field} for the function $\delta/2$ to obtain a family of vector fields $V$ and a neighbourhood $\partial M\subset U_V\subset\mathscr{U}$ with properties~\ref{vector field}$(a)-(f)$.
Now we use the open cover $(U_i)_{i\in I}$ from Lemma~\ref{cover1}:

Let $i\in I$.
By compactness, there exists some $\tilde{\delta}_i>0$ such that
\begin{equation}\label{Diff:eq1}
||V(\xi)(p,s)|_m-|V(\xi)(q,s)|_m|<\frac{1}{2}\min\{\delta(p'):p'\in\overline{B}_m(U_i,1)\}
\end{equation}
and
\begin{equation}\label{Diff:eq2}
||V(\xi)(p,s)|_{g(\xi)}-|V(\xi)(q,s)|_{g(\xi)}|<\frac{1}{2}\min\{\delta(p'):p'\in\overline{B}_m(U_i,1)\}
\end{equation}
for all $p,q\in\overline{B}_m(U_i,1)$ with $d_m(p,q)<\tilde{\delta}_i$ and for all $\xi\in K,s\in I$.
Let $(\psi_i)_{i\in I}$ be a partition of unity subordinate to $(B_m(U_i,1))_{i\in I}$.
Set
\begin{equation}\nonumber
\tilde{\delta}\colon M\to\mathbb{R}\,,\,\tilde{\delta}(p)=\sum_{i\in I}\psi_i(p)\cdot\min_{j\in I_i}\tilde{\delta}_j.
\end{equation}
This is a well-defined continuous function with $\tilde{\delta}(p)>0$ for all $p\in M$.
We have $\delta(p)\leq\tilde{\delta}_i$ for all $i\in I$ and $p\in\overline{B}_m(U_i,1)$.

Since $V\equiv 0$ on $\partial M$, there is a neighbourhood $\partial M\subset U'\subset M$ such that
\begin{equation}\label{Diff:eq3}
\begin{alignedat}{2}
&|V(\xi)(p,s)|_m&&<\min\{\tilde{\delta}(p),1\},\\
&|V(\xi)(p,s)|_{g(\xi)}&&<\tilde{\delta}(p)
\end{alignedat}
\end{equation}
for all $p\in U',\xi\in K$ and $s\in I$.
Choose a smooth bumping function $\chi\colon M\to[0,1]$ with
\begin{enumerate}
\item[\myicon]{$\chi=1$ on some neighbourhood $U''$ of $\dM$;}
\item[\myicon]{$\supp\chi\subset U'$.}
\end{enumerate}
Then set
\begin{align}\nonumber
V_{\Omega}\colon&K\to C^{\infty}(M\times I;TM),\\\nonumber
&V_\Omega(\xi)(p,s)=\chi(p)\cdot V(\xi)(p,s).
\end{align}
This family satisfies the inequalities~\eqref{Diff:eq3} globally and it satisfies $|V_\Omega(\xi)|\leq|V(\xi)|$ for every Riemannian metric.
In particular, $V_\Omega(\xi)$ has bounded velocity for every $\xi\in K$.

According to \cite[Thm.~8.1.1]{Hirsch} and \cite[Satz A.5]{Frerichs}, the solution curves to the IVP
\begin{align}\nonumber
&\dot{\gamma}(s)=V_\Omega(\xi)(\gamma(s),s)\\\nonumber
&\gamma(0)=p,\;\;p\in M
\end{align}
assemble in a well-defined and continuous solution map
\begin{equation}\nonumber
\Omega\colon K\to C^{\infty}(M\times I;M)
\end{equation}
which consists of diffeomorphisms.
From this we deduce a map
\begin{equation}\nonumber
\Omega\colon K\times[0,1]\to\Diff(M)
\end{equation}
which is again denoted by $\Omega$, and check properties $(a)-(f)$:
\begin{enumerate}
\item[$(a)$]{is by definition of flows.}
\item[$(b)$]{follows from Lemma~\ref{vector field}$(b)$.}
\item[$(c)$]{As in the proof of Lemma~\ref{vector field}, we find a neighbourhood $U\subset U_V\cap U''$ such that
\begin{align}\nonumber
\frac{\partial\omega(\xi)}{\partial s}(p,s)&=V_{\Omega}(\omega(\xi)(p,s),s),\\\nonumber
\omega(\xi)(p,0)&=p
\end{align}
for all $p\in U,\xi\in K$ and $s\in I$.
By uniqueness of integral curves, it holds $\Omega(\xi)(p,s)=\omega(\xi)(p,s)$ for $p\in U$.
In particular, $\Omega(\xi,1)=\varrho_{g(\xi)}\circ\varrho^{-1}_{g(\xi_0)}$ on $U$ for all $\xi\in K$.}
\item[$(d)$]{follows from the fact that $V_\Omega\equiv 0$ on $M\setminus\cl(\mathscr{U})$.}
\item[$(e)$]{Let $p\in M$.
Then it holds $p\in U_i$ for some $i\in I$.
For any $\xi\in K$ and $s\in[0,1]$, we have
\begin{align}\nonumber
d_m(\Omega(\xi,s)(p),p)&=d_m(\Omega(\xi,s)(p),\Omega(\xi,0)(p))\\\nonumber
&\leq\int_0^s\left|\frac{\partial\Omega(\xi)}{\partial t}(p,t)\right|_m\;\mathrm{d}t\\\nonumber
&=\int_0^s|V_\Omega(\xi)(\Omega(\xi,t)(p),t)|_m\;\mathrm{d}t\\\nonumber
&\leq\int_0^s1\;\mathrm{dt}\leq 1.
\end{align}
This means $\Omega(\xi,s)(p)\in\overline{B}_m(U_i,1)$ for all $\xi\in K$ and $s\in[0,1]$.
Therefore, it holds
\begin{align}\nonumber
d_m(\Omega(\xi,s)(p),p)&\leq\int_0^s|V_\Omega(\xi)(\Omega(\xi,t)(p),t)|_m\;\mathrm{d}t\\\nonumber
&<\int_0^s\tilde{\delta}(\Omega(\xi,t)(p))\;\mathrm{d}t\\\nonumber
&\leq\int_0^s\tilde{\delta}_i\;\mathrm{d}t\leq\tilde{\delta}_i
\end{align}
for all $\xi\in K$ and $s\in[0,1]$.
Finally, we obtain
\begin{align}\nonumber
d_m(\Omega(\xi,s)(p),p)&\leq\int_0^s|V_\Omega(\xi)(\Omega(\xi,t)(p),t)|_m\;\mathrm{d}t\\\nonumber
&\leq\int_0^s|V(\xi)(\Omega(\xi,t)(p),t)|_m\;\mathrm{d}t\\\nonumber
&\leq\int_0^s|V(\xi)(p,t)|_m+||V(\xi)(p,t)|_m-|V(\xi)(\Omega(\xi,t)(p),t)|_m|\;\mathrm{d}t\\\nonumber
&<\int_0^s\frac{1}{2}\delta(p)+\frac{1}{2}\min\{\delta(p'):p'\in\overline{B}_m(U_i,1)\}\;\mathrm{d}t\\\nonumber
&\leq\delta(p)
\end{align}
for all $\xi\in K$ and $s\in[0,1]$ by inequality~\eqref{Diff:eq1}.}
\item[$(f)$]{Let $p\in M$.
Choose some $i\in I$ with $p\in U_i$.
We know that $\Omega(\xi,s)(p)\in\overline{B}_m(U_i,1)$ and that $d_m(\Omega(\xi,s)(p),p)<\tilde{\delta}_i$ for all $\xi\in K$ and $s\in[0,1]$.
The assertion follows from inequality ~\eqref{Diff:eq2}.\qedhere}
\end{enumerate}
\end{proof}

\begin{remark}
Hirsch \cite[Thm.~8.1.1]{Hirsch} refers to time-dependent vector fields that are parametrized over compact intervals.
The result is also true for open intervals.
\end{remark}

\begin{corollary}\label{uniformization}
Let $K$ be a compact Hausdorff space and let $g\colon K\to\mathscr{R}_{>\sigma}(M)$ be a continuous family of Riemannian metrics of scalar curvature greater than $\sigma$.
Let $\beta\in\mathscr{R}(M)$ be another distinguished Riemannian metric.

For each neighbourhood $\mathscr{U}$ of $\dM$, there exists a smaller neighbourhood $\dM\subset U\subset\mathscr{U}$ and a continuous map
\begin{equation}\nonumber
f\colon K\times[0,1]\to\mathscr{R}_{>\sigma}(M)
\end{equation}
so that the following holds for all $\xi\in K$ and $s\in[0,1]$:
\begin{enumerate}
\item[(a)]{$f(\xi,0)=g(\xi)$;}
\item[(b)]{$N_{f(\xi,1)}=N_{\beta}$ and $\varrho_{f(\xi,1)}=\varrho_{\beta}$ on $\varrho_{\beta}^{-1}(U)$;}
\item[(c)]{$f(\xi,s)_t=g(\xi)_t$ near $\{0\}\times\dM$;}
\item[(d)]{$f(\xi,s)=g(\xi)$ on $M\setminus\mathscr{U}$;}
\item[(e)]{$f(\xi,s)$ is quasi-isometric to $g(\xi)$ via the identity;}
\item[(f)]{if $g(\xi)$ is complete, then so is $f(\xi,s)$.}
\end{enumerate}
\end{corollary}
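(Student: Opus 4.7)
The plan is to apply Proposition~\ref{diff} to an enlarged family in which $\beta$ plays the role of the basepoint metric, and to define $f$ as a pullback. First I would form the pointed compact Hausdorff space $\widetilde K:=K\sqcup\{\xi_0\}$ by adjoining a new basepoint $\xi_0$, and extend $g$ to $\widetilde g\colon\widetilde K\to\mathscr{R}(M)$ by $\widetilde g(\xi_0):=\beta$; this is permissible because Proposition~\ref{diff} imposes no scalar curvature hypothesis on the family. For a continuous positive function $\delta\colon M\to\mathbb{R}$ to be specified below, Proposition~\ref{diff} then supplies a continuous $\Omega\colon\widetilde K\times[0,1]\to\Diff(M)$ together with a neighbourhood $\dM\subset U\subset\mathscr{U}$. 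The candidate for $f$ is
\[
f(\xi,s):=\Omega(\xi,s)^{\ast}g(\xi),\qquad\xi\in K,\;s\in[0,1],
\]
so that $\Omega(\xi,s)\colon(M,f(\xi,s))\to(M,g(\xi))$ is tautologically an isometry.

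Properties (a) and (d) follow from parts (a) and (d) of Proposition~\ref{diff}. For (b), the identity $\Omega(\xi,1)=\phi_{g(\xi)}\circ\phi_{\beta}^{-1}$ on $U$ forces $\Omega(\xi,1)$ to fix $\dM$ pointwise and $d\Omega(\xi,1)$ to send $N_\beta$ to $N_{g(\xi)}$; combined with the isometry property this yields $N_{f(\xi,1)}=N_\beta$, and then isometric transport of the normal exponential map together with $\Omega(\xi,1)\circ\phi_\beta=\phi_{g(\xi)}$ on $\phi_\beta^{-1}(U)$ gives $\phi_{f(\xi,1)}=\phi_\beta$ there, after possibly shrinking $U$. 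Property (c) is the key geometric observation and in fact holds for every $s\in[0,1]$: since $\Omega(\xi,s)$ is a boundary-fixing isometry between $f(\xi,s)$ and $g(\xi)$, it maps $N_{f(\xi,s)}$ to $N_{g(\xi)}$, hence $\Omega(\xi,s)\circ\phi_{f(\xi,s)}=\phi_{g(\xi)}$ near $\{0\}\times\dM$, and pulling back produces
\[
\phi_{f(\xi,s)}^{\ast}f(\xi,s)=\phi_{g(\xi)}^{\ast}\bigl(\Omega(\xi,s)^{-1}\bigr)^{\ast}\Omega(\xi,s)^{\ast}g(\xi)=\phi_{g(\xi)}^{\ast}g(\xi)=dt^{2}+g(\xi)_t,
\]
so $f(\xi,s)_t=g(\xi)_t$.

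To enforce $\scal_{f(\xi,s)}>\sigma$ I would use the freedom to prescribe the bound $\delta$ in Proposition~\ref{diff}. From the pullback formula $\scal_{f(\xi,s)}(p)=\scal_{g(\xi)}(\Omega(\xi,s)(p))$ one is reduced to ensuring $\scal_{g(\xi)}(q)>\sigma(p)$ whenever $q$ is $m$-close enough to $p$. Using compactness of $K$ and joint continuity of $(\xi,p)\mapsto\scal_{g(\xi)}(p)-\sigma(p)$, a partition-of-unity construction yields a continuous $\delta_0\colon M\to(0,\infty)$ with this property; I would then feed $\delta:=\min(\delta_0,1)$ into Proposition~\ref{diff}. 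Properties (e) and (f) follow from the distance identity $d_{f(\xi,s)}(p,q)=d_{g(\xi)}(\Omega(\xi,s)(p),\Omega(\xi,s)(q))$: the bound $\delta\leq 1$ and Proposition~\ref{diff}(f) give $d_{g(\xi)}(\Omega(\xi,s)(p),p)\leq 1$, so the triangle inequality yields $|d_{f(\xi,s)}(p,q)-d_{g(\xi)}(p,q)|\leq 2$, proving (e) with $(A,B)=(1,2)$; the same identity exhibits $\Omega(\xi,s)$ as a metric-space isometry between $(M,d_{g(\xi)})$ and $(M,d_{f(\xi,s)})$, so completeness transfers.

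The main technical obstacle is the construction of $\delta_0$ on the non-compact manifold $M$: while pointwise positivity of $(\xi,p)\mapsto\scal_{g(\xi)}(p)-\sigma(p)$ on $K\times M$ is immediate, producing a single continuous positive function on $M$ that simultaneously controls the oscillation of $\sigma$ and of $\scal_{g(\xi)}$ uniformly in $\xi\in K$ requires a partition-of-unity argument in the spirit of the choice of $\widetilde\delta$ in the proof of Proposition~\ref{diff}.
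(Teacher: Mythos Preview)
Your proposal is correct and follows essentially the same approach as the paper: enlarge $K$ by an isolated basepoint carrying $\beta$, apply Proposition~\ref{diff} with a carefully chosen $\delta$, define $f(\xi,s)=\Omega(\xi,s)^\ast g(\xi)$, and verify (a)--(f) using the isometry property of the pullback. The paper carries out exactly the partition-of-unity construction of $\delta$ that you flag as the main technical obstacle, using the cover $(U_i)_{i\in I}$ from Lemma~\ref{cover1} and the scalar curvature surplus $D(\xi)(p)=\scal_{g(\xi)}(p)-\sigma(p)$.
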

Property $(e)$ means that there exist constants $A\geq 1$ and $B\geq 0$ such that
\begin{equation}\nonumber
\frac{1}{A}\,d_{g(\xi)}(p,q)-B\leq d_{f(\xi,s)}(p,q)\leq A\,d_{g(\xi)}(p,q)+B
\end{equation}
for all $p,q\in M$.
In fact, we can arrange these inequalities for $A=1$ and an arbitrary constant $B>0$ independent from $\xi$ and $s$.
The proof below works with $B=1$.
\begin{proof}
Let $\mathscr{U}$ be a neighbourhood of $\dM$.
Let $\xi_0$ be an arbitrary object.
Then $K\cup\{\xi_0\}$ with the topology of disjoint union is again a compact Hausdorff space.
We put $g(\xi_0):=\beta$.

Each metric $f(\xi,s)$ will be a pullback of $g(\xi)$ along some diffeomorphism of $M$.
For the construction, we consider the open cover $(U_i)_{i\in I}$ from Lemma~\ref{cover1}.
Another important quantity is the scalar curvature surplus
\begin{equation}\nonumber
D\colon K\to C^{\infty}(M;\mathbb{R})\,,\,D(\xi)(p)=\scal_{g(\xi)}(p)-\sigma(p).
\end{equation}
Now let $i\in I$.
By compactness, there exists some $\delta_i>0$ such that
\begin{equation}\nonumber
|\scal_{g(\xi)}(p)-\scal_{g(\xi)}(q)|<\min\{D(\xi')(p'):p'\in\overline{B}_m(U_i,1),\xi'\in K\}
\end{equation}
for all $p,q\in\overline{B}_m(U_i,1)$ with $d_m(p,q)<\delta_i$ and for all $\xi\in K$.

Let $(\psi_i)_{i\in I}$ be a partition of unity subordinate to $(U_i)_{i\in I}$. Set
\begin{equation}\nonumber
\delta\colon M\to\mathbb{R}\,,\,\delta(p)=\frac{1}{2}\sum_{i\in I}\psi_i(p)\cdot\min\bigl\{\min_{j\in I_i}\delta_j,1\bigr\}.
\end{equation}
This is a well-defined continuous function with $0<\delta(p)\leq\frac{1}{2}$ for all $p\in M$.
Furthermore, it holds $\delta(p)<\delta_i$ for all $i\in I$ and $p\in U_i$.

Proposition~\ref{diff}, applied for the compact Hausdorff space $K\cup\{\xi_0\}$ and the function $\delta$, yields a neighbourhood $\dM\subset U\subset\mathscr{U}$ and a family $\Omega\colon(K\cup\{\xi_0\})\times[0,1]\to\Diff(M)$ with its distinguished properties~\ref{diff}$(a)-(f)$.
We put
\begin{equation}\nonumber
f\colon K\times[0,1]\to\mathscr{R}(M)\,,\,f(\xi,s):=\Omega(\xi,s)^\ast g(\xi).
\end{equation}
This operation preserves the lower scalar curvature bound $\sigma$, which is shown as follows:
Let $p\in M,\xi\in K$ and $s\in[0,1]$.
Then it holds $\scal_{f(\xi,s)}(p)=\scal_{g(\xi)}(\Omega(\xi,s)(p))$.

Choose some $i\in I$ with $p\in U_i$.
By Proposition~\ref{diff}$(e)$, we have $d_m(\Omega(\xi,s)(p),p)<\delta(p)<1$, i.e. $\Omega(\xi,s)(p)\in\overline{B}_m(U_i,1)$.
The same inequality gives $d_m(\Omega(\xi,s)(p),p)<\delta(p)<\delta_i$ so that
\begin{align}\nonumber
&|\scal_{g(\xi)}(\Omega(\xi,s)(p))-\scal_{g(\xi)}(p)|\\\nonumber
<\;&\min\{D(\xi')(p'):p'\in\overline{B}_m(U_i,1),\xi'\in K\}\leq D(\xi)(p).
\end{align}
Hence, we obtain $\scal_{f(\xi,s)}(p)>\sigma(p)$.

It remains to check properties $(a)-(f)$:
\begin{enumerate}
\item[$(a)$]{follows from Proposition~\ref{diff}$(a)$;}
\item[$(b)$]{Let $\xi\in K$ and $(t,p)\in\varrho_{\beta}^{-1}(U)$.
The map $\Omega(\xi,1)\colon(M,f(\xi,1))\to(M,g(\xi))$, as an isometry, maps normal geodesics to normal geodesics.
By Proposition~\ref{diff}$(c)$ we have
\begin{align}\nonumber
\Omega(\xi,1)(\varrho_{\beta}(t,p))=\varrho_{g(\xi)}(t,p)&=\exp_p^{g(\xi)}(tN_{g(\xi)}(p))\\\nonumber
&=\Omega(\xi,1)(\exp_p^{f(\xi,1)}(tN_{f(\xi,1)}(p)))\\\nonumber
&=\Omega(\xi,1)(\varrho_{f(\xi,1)}(t,p))
\end{align}
since $\Omega$ preserves boundary points.
Thus, $\varrho_{f(\xi,1)}(t,p)=\varrho_{\beta}(t,p)$.
In particular,
\begin{equation}\nonumber
N_{f(\xi,1)}(p)=\left.\frac{\partial}{\partial t}\right|_{t=0}\varrho_{f(\xi,1)}(t,p)=\left.\frac{\partial}{\partial t}\right|_{t=0}\varrho_{\beta}(t,p)=N_{\beta}(p).
\end{equation}}
\item[$(c)$]{Let $\xi\in K$.
We choose a neighbourhood $V\subset[0,\infty)\times\partial M$ of $\dM$ such that $\varrho_{f(\xi,s)}\colon V\to U^{f(\xi,s)}$ is a geodesic collar neighbourhood with $U^{f(\xi,s)}\subset U$ for all $s\in[0,1]$.
Consider the diagram
\begin{equation}\nonumber
\begin{tikzcd}
(V,\mathrm{d}t^2+g(\xi)_t)\arrow{rr}{\varrho_{g(\xi)}}&&(U^{g(\xi)},g(\xi))\\
\\
(V,\mathrm{d}t^2+f(\xi,s)_t)\arrow{rr}{\varrho_{f(\xi,s)}}\arrow{uu}{\id}&&(U^{f(\xi,s)},f(\xi,s))\arrow{uu}{\Omega(\xi,s)}
\end{tikzcd}
\end{equation}
It commutes since $\Omega(\xi,s)$ maps normal geodesics to normal geodesics.
As the horizontal maps and the vertical map on the right are isometries, so is the identity on the left.}
\item[$(d)$]{follows from Proposition~\ref{diff}$(d)$.}
\item[$(e)$]{Let $p,q\in M,\xi\in K$ and $s\in[0,1]$. According to Proposition~\ref{diff}$(f)$, it holds
\begin{align}\nonumber
d_{f(\xi,s)}(p,q)&=d_{g(\xi)}(\Omega(\xi,s)(p),\Omega(\xi,s)(q))\\\nonumber
&\leq d_{g(\xi)}(p,q)+d_{g(\xi)}(\Omega(\xi,s)(p),p)+d_{g(\xi)}(\Omega(\xi,s)(q),q)\\\nonumber
&\leq d_{g(\xi)}(p,q)+1.
\end{align}
The other inequality is similar.}
\item[$(f)$]{Let $\xi\in K$ and $s\in[0,1]$.
Since $\Omega(\xi,s)\colon(M,f(\xi,s))\to(M,g(\xi))$ is a Riemannian isometry, it is also a metric isometry.
Completeness is transferred from $g(\xi)$ to $f(\xi,s)$.\qedhere}
\end{enumerate}
\end{proof}
\subsection{A two-stage deformation scheme}
In the next step, we establish a standard type of Riemannian metrics within the deformation principle.
These are called \emph{C-normal metrics}.
They give better control over the scalar curvature.

For the definition, we consider a Riemannian metric $g$ on $M$ with its representation $g=\mathrm{d}t^2+g_t$ in a geodesic collar neighbourhood $\varrho_g\colon V\to U^g$.
Then it holds $\mathrm{II}_g=-\tfrac{1}{2}\dot{g}_0$ as a $(0,2)$-tensor field on $\dM$.
The associated Weingarten map of $\dM$ is denoted by $W_g$.
Its mean curvature is given by
\begin{equation}\nonumber
H_g=\tfrac{1}{n-1}\tr(W_g)=-\tfrac{1}{2(n-1)}\tr_{g_0}(\dot{g}_0).
\end{equation}

More generally, for each $(t,p)\in V$, we find a neighbourhood $p\in V_p\subset\dM$ such that $\{t\}\times V_p\subset V$.
The second fundamental form for this slice is $\mathrm{II}_t=-\frac{1}{2}\dot{g}_t$ (with respect to the normal $\frac{\partial}{\partial t}$).
They build a map $\mathrm{II}_\bullet\colon V\to T^\ast\dM\otimes T^\ast\dM$.
So do the corresponding Weingarten maps $W_\bullet\colon V\to T^\ast\dM\otimes T(\dM)$, uniquely defined by
\begin{equation}\nonumber
\langle W_t(p)(v),w\rangle_{g_t}=\mathrm{II}_t(p)(v,w)=-\frac{1}{2}\dot{g}_t(p)(v,w).
\end{equation}
The scalar curvature of $(M,g)$ within $U^g$ can be computed by means of \cite[Prop.~4.1]{BGM}:
\begin{equation}\label{magicformula}
\scal_g=\scal_{g_t}+3\,\tr(W_t^2)-\tr(W_t)^2-\tr_{g_t}(\ddot{g}_t).
\end{equation}
\begin{definition}
Let $C\in C^{\infty}(\dM;\mathbb{R})$.
A Riemannian metric $g\in\mathscr{R}(M)$ is called \emph{C-normal} if the pullback metrics $g_\bullet$ with respect to some small geodesic collar neighbourhood $\varrho_g\colon V\to U^g$ are given by
\begin{align}\nonumber
g_t(p)&=g_0(p)+t\cdot\dot{g}_0(p)-C(p)t^2\cdot g_0(p)\\\nonumber
&=g_0(p)-2t\cdot\mathrm{II}_g(p)-C(p)t^2\cdot g_0(p),\;(t,p)\in V.
\end{align}
\end{definition}
Every Riemannian metric on $M$ can be deformed into a $C$-normal metric while preserving lower scalar curvature bounds, with the original $1$-jet along the boundary:
\begin{proposition}\label{proposition23}
Let $K$ be a compact Hausdorff space and let $g\colon K\to\mathscr{R}_{>\sigma}(M)$ be a continuous family of Riemannian metrics of scalar curvature greater than $\sigma$.

Then there exists a smooth function $C_0\in C^{\infty}(\dM;\mathbb{R})$ such that for each function $C\in C^{\infty}(\partial M;\mathbb{R})$ with $C\geq C_0$ and each neighbourhood $\mathscr{U}$ of $\dM$, there exists a continuous map
\begin{equation}\nonumber
f\colon K\times[0,1]\to\mathscr{R}_{>\sigma}(M)
\end{equation}
such that the following holds for all $\xi\in K$ and $s\in[0,1]$:
\begin{enumerate}
\item[(a)]{$f(\xi,0)=g(\xi)$;}
\item[(b)]{$f(\xi,1)$ is $C$-normal;}
\item[(c)]{if $g(\xi)$ is $\tilde{C}$-normal, then $f(\xi,s)$ is $((1-s)\tilde{C}+sC)$-normal;}
\item[(d)]{$f(\xi,s)_0=g(\xi)_0$ and $\mathrm{II}_{f(\xi,s)}=\mathrm{II}_{g(\xi)}$;}
\item[(e)]{$\ddot{f}(\xi,s)_0=(1-s)\ddot{g}(\xi)_0-2sCg(\xi)_0$;}
\item[(f)]{$f(\xi,s)^{(\ell)}_0=(1-s)g(\xi)_0^{(\ell)}$ for all $\ell\geq 3$;}
\item[(g)]{$f(\xi,s)=g(\xi)$ on $M\setminus\mathscr{U}$;}
\item[(h)]{$f(\xi,s)$ is quasi-isometric to $g(\xi)$ via the identity;}
\item[(i)]{if $g(\xi)$ is complete, then so is $f(\xi,s)$.}
\end{enumerate}
\end{proposition}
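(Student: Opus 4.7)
The plan is to work in a family of geodesic collar neighbourhoods obtained from Fact~C1, to interpolate the second-order Taylor coefficient of the pullback metric toward the prescribed $C$-normal form, and to cut off the modification so that the ambient metric is preserved outside $\mathscr{U}$. The technical heart of the argument is the scalar curvature estimate, which dictates how to choose $C_0$.

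Applying Fact~C1 to $g\colon K\to\mathscr{R}(M)$ and the neighbourhood $\mathscr{U}$, I obtain a continuous positive function $\eta\colon\dM\to\R$ and a continuous family of geodesic collar embeddings $\phi_\xi\colon V_\eta\to U^\xi_\eta\subset\mathscr{U}$. On the collar, $\phi_\xi^\ast g(\xi)=\mathrm{d}t^2+g(\xi)_t$, and Taylor's theorem with integral remainder yields a decomposition $g(\xi)_t=g(\xi)_0+t\,\dot g(\xi)_0+t^2 R(\xi)(t)$, where $R$ is smooth in $t$ and continuous in $\xi$ with $R(\xi)(0)=\tfrac12\ddot g(\xi)_0$. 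Fix a smooth cutoff $\chi\colon[0,\infty)\to[0,1]$ with $\chi\equiv 1$ near $0$ and compact support of width to be determined, and set
\begin{equation}\nonumber
f(\xi,s)_t:=g(\xi)_t-s\,\chi(t)\bigl(t^2 R(\xi)(t)+C\,t^2 g(\xi)_0\bigr),
\end{equation}
extended to $M$ by $\phi_\xi$ in the collar and by $g(\xi)$ outside. Near $t=0$ all derivatives of $\chi$ vanish, so Taylor expansion immediately yields (d), (e), (f); direct substitution at $s=1$ on the tube $\{\chi\equiv 1\}$ gives the $C$-normal form (b); $\tilde C$-normality of $g(\xi)$ amounts to $R(\xi)(t)\equiv -\tilde C g(\xi)_0$ near $t=0$, so the interpolation carries the $((1-s)\tilde C+sC)$-normal form (c); and (a), (g) hold by construction.

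The main obstacle is ensuring $\scal_{f(\xi,s)}>\sigma$ throughout. Applying \eqref{magicformula} in the collar gives
\begin{equation}\nonumber
\scal_{f(\xi,s)}=\scal_{f(\xi,s)_t}+3\tr(W_t^2)-\tr(W_t)^2-\tr_{f(\xi,s)_t}(\ddot f(\xi,s)_t).
\end{equation}
The decisive observation is that $-\tr_{f(\xi,s)_t}(\ddot f(\xi,s)_t)$ at $t=0$ equals $-(1-s)\tr_{g(\xi)_0}(\ddot g(\xi)_0)+2sC(n-1)$, which grows linearly in $C$. Using compactness of $K$ and the relatively compact cover $(U_i)$ from Lemma~\ref{cover1}, I construct $C_0\in C^\infty(\dM;\R)$ so large pointwise that, for any $C\geq C_0$, this positive contribution dominates the sum of the other three terms plus $\sigma$, uniformly in $\xi\in K$ and $s\in[0,1]$. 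This gives the estimate on $\dM$, and continuity extends it to some neighbourhood; the cutoff is then supported within that neighbourhood intersected with $\mathscr{U}$. The delicate subpoint is the transition zone of $\chi$, where $(\chi(t)t^2)''$ can introduce further terms of order $C$ in $\ddot f(\xi,s)_t$. Fine-tuning the cutoff width (as a continuous function on $\dM$, depending on $C$) keeps these terms under control while also ensuring the positivity of the deformed metric; this is where I expect the most careful work.

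For (h) and (i): the deformation vanishes outside the cutoff tube, and on each relatively compact piece of the cover the pointwise comparison between $f(\xi,s)$ and $g(\xi)$ is uniformly bounded. A local-to-global argument using the locally finite structure of $(U_i)$ and the freedom to reroute curves around the thin tube shows that distances differ by at most a bounded additive constant, giving quasi-isometry via the identity with $A=1$ and some $B>0$, hence (h). Property (i) follows: a Cauchy sequence for $f(\xi,s)$ is Cauchy for $g(\xi)$ up to a bounded additive error, so it converges by completeness of $g(\xi)$.
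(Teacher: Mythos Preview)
Your overall strategy---Taylor-expand $g(\xi)_t$, interpolate toward the $C$-normal second-order coefficient, and control scalar curvature through the $-\tr_{g_t}(\ddot g_t)$ term---matches the paper. The substantive difference, and the gap in your sketch, is in the gluing step.

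You multiply the deformation by a spatial cutoff $\chi(t)$; the paper does not. In your transition zone, $\ddot f(\xi,s)_t$ acquires the term $-sC\,(\chi(t)t^2)''\,g_0$. Since $(\chi t^2)''=\ddot\chi\,t^2+4\dot\chi\,t+2\chi$, a scaling check shows this quantity is $O(1)$ \emph{independently of the cutoff width} (the growth of $\ddot\chi$ and the smallness of $t^2$ cancel), and it is necessarily negative somewhere because $\chi t^2\ge 0$ is compactly supported. Hence the contribution to $\scal_f$ in the transition zone is genuinely of order $-C$, which for large $C$ overwhelms the gap $\scal_g-\sigma$. Your remark that ``fine-tuning the cutoff width \dots\ keeps these terms under control'' is therefore not enough: to force $(\chi t^2)''\ge -\delta$ with $\delta\sim C^{-1}$ one must let the transition width grow like $C$ relative to the inner radius, and then separately bound the first-order and zeroth-order perturbations; all of this has to be done as a function on $\partial M$ using the local gap $\scal_g-\sigma$. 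This is doable but is precisely the hard analysis you have deferred.

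The paper sidesteps this entirely. It defines the local deformation $F(\xi,s)$ on a collar \emph{without any cutoff}, checks $\scal_{F(\xi,s)}|_{\partial M}\ge\scal_{g(\xi)}|_{\partial M}>\sigma$ (this is where the choice of $C_0$ enters, via $2(n-1)C_0\ge|\tr_{g_0}\ddot g_0|$), extends the inequality to a neighbourhood by continuity, and then invokes the B\"ar--Hanke local flexibility lemma \cite[Addendum~3.4]{BH2022} for the open second-order relation $\scal>\sigma$. That lemma manufactures the global $f$ with $f=F$ near $\partial M$ and $f=g$ outside $\mathscr{U}$; its explicit output $f(\xi,s)(p)=F(\xi,s\tau(p))(p)$ immediately yields a pointwise bilipschitz bound $\tfrac12|v|_{g(\xi)}\le|v|_{f(\xi,s)}\le 2|v|_{g(\xi)}$, from which (h) and (i) follow without any curve-rerouting argument.

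Two smaller points. First, the paper begins by invoking Corollary~\ref{uniformization} so that all $g(\xi)$ share the \emph{same} geodesic collar map; this is what makes the flexibility lemma applicable on a $\xi$-independent neighbourhood and would also simplify your construction. Second, your description of $C_0$ (``dominates the sum of the other three terms plus $\sigma$'') is slightly off: at $t=0$ the other three terms already assemble to $\scal_{g(\xi)}>\sigma$, and one only needs the $s$-correction $s\bigl(\tr_{g_0}\ddot g_0+2(n-1)C\bigr)$ to be nonnegative, which is exactly the paper's bound on $C_0$.
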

\begin{proof}
Based on Corollary~\ref{uniformization}, it can be assumed that all metrics $g(\xi),\xi\in K$ have the same normal exponential map.

Let $\mathscr{U}$ be a neighbourhood of $\dM$ and let $\varrho\colon V\to U$ be a common geodesic collar neighbourhood for all metrics in $g$ satisfying $U\subset\mathscr{U}$.
One can identify the metrics $g(\xi),\xi\in K$ on $U$ with their associated generalised cylinder metrics on $V$.

Let $(U_i)_{i\in I}$ be a cover of $\dM$ consisting of open and relatively compact subsets, and let $(\psi_i)_{i\in I}$ be a partition of unity subordinate to $(U_i)_{i\in I}$.
For each $i\in I$, we define
\begin{equation}\nonumber
C_i:=\tfrac{1}{2(n-1)}\max_{\xi\in K}\Vert\tr_{g(\xi)_0}(\ddot{g}(\xi)_0)\Vert_{C^0(U_i)}.
\end{equation}
It holds $C_i<\infty$ due to compactness of $\cl(U_i)$.
Then put
\begin{equation}\nonumber
C_0\colon\dM\to\mathbb{R}\,,\,C_0:=\sum_{i\in I}\psi_i\cdot C_i.
\end{equation}
This is a smooth function with
\begin{equation}\nonumber
2(n-1)\cdot C_0(p)\geq|\tr_{g(\xi)_0(p)}(\ddot{g}(\xi)_0(p))|
\end{equation}
for all $\xi\in K$ and $p\in\partial M$.

Now let $C\colon\partial M\to\mathbb{R}$ be a smooth function with $C\geq C_0$.
We consider the Taylor expansion of $g_\bullet$ around $t=0$:
\begin{equation}\nonumber
g(\xi)_t=g(\xi)_0+\dot{g}(\xi)_0\cdot t+\frac{1}{2}\ddot{g}(\xi)_0\cdot t^2+R(\xi)_t.
\end{equation}
Here $R(\xi)_\bullet$ is a smooth map $R(\xi)_\bullet\colon V\to T^\ast\dM\otimes T^\ast\dM$ made out of symmetric $(0,2)$-tensors on $\dM$ that depends continuously on $\xi$.
It holds
\begin{equation}\nonumber
R(\xi)_0=\dot{R}(\xi)_0=\ddot{R}(\xi)_0=0
\end{equation}
for all $\xi\in K$.
Put
\begin{equation}\nonumber
F(\xi,s):=g(\xi)-s\left(\left(\tfrac{1}{2}\ddot{g}(\xi)_0+C\cdot g(\xi)_0\right)\cdot t^2+R(\xi)_t\right)
\end{equation}
for $\xi\in K$ and $s\in[0,1]$.
This defines a continuous family $F\colon K\to C^{\infty}(V;T^\ast V\otimes T^\ast V)$ of symmetric $(0,2)$-tensor fields.
It can also be seen as a family of $(0,2)$-tensor fields on $U$.

Shrinking $U$ if necessary, we can assume that $F(\xi,s)$ is a Riemannian metric on $U$ for each $\xi\in K$ and $s\in[0,1]$.
This is by continuity and compactness of $K$.
Furthermore, formula~\eqref{magicformula} gives
\begin{alignat}{2}\nonumber
\scal_{F(\xi,s)}|_{\dM}&=\;&&\scal_{F(\xi,s)_0}+3\,\tr(W_{F(\xi,s)}^2)-\tr(W_{F(\xi,s)})^2-\tr_{F(\xi,s)_0}(\ddot{F}(\xi,s)_0)\\\nonumber
&=\;&&\scal_{g(\xi)_0}+3\,\tr(W_{g(\xi)}^2)-\tr(W_{g(\xi)})^2-\tr_{g(\xi)_0}(\ddot{g}(\xi)_0)\\\nonumber
&&&-\tr_{g(\xi)_0}(-s\ddot{g}(\xi)_0-2sC\cdot g(\xi)_0)\\\nonumber
&=\;&&\scal_{g(\xi)}|_{\dM}+s\cdot\left(\tr_{g(\xi)_0}(\ddot{g}(\xi)_0)+2C\cdot(n-1)\right)\geq\scal_{g(\xi)}|_{\dM}>\sigma|_{\dM}.
\end{alignat}
Continuity allows to assume that $F(\xi,s)\in\mathscr{R}_{>\sigma}(U)$ for all $\xi\in K$ and $s\in[0,1]$.
Along the boundary, we find
\begin{enumerate}
\item[\myicon]{$F(\xi,s)|_{\dM}=g(\xi)|_{\dM}$;}
\item[\myicon]{$\dot{F}(\xi,s)_0=\dot{g}(\xi)_0$;}
\item[\myicon]{$\ddot{F}(\xi,s)_0=(1-s)\ddot{g}(\xi)_0-2sCg(\xi)_0$;}
\item[\myicon]{$F(\xi,s)_0^{(\ell)}=(1-s)g(\xi)_0^{(\ell)}$ for all $\ell\geq 3$.}
\end{enumerate}
Finally, since $F(\xi,s)|_{\dM}=g(\xi)|_{\dM}$, it can be arranged that
\begin{equation}\label{bilipschitzproposition23}
\frac{1}{2}|v|_{g(\xi)}\leq|v|_{F(\xi,s)}\leq 2|v|_{g(\xi)}
\end{equation}
for all $v\in TU,\xi\in K$ and $s\in[0,1]$. This time we use continuity for the maps
\begin{equation}\nonumber
K\times[0,1]\times\tilde{U}\times S^{n-1}\to\mathbb{R}\,,\,(\xi,s,p,v)\mapsto\frac{|v|_{F(\xi,s)(p)}}{|v|_{g(\xi)(p)}}
\end{equation}
where $\tilde{U}\subset U$ is an arbitraty coordinate neighbourhood.
Observe that one can restrict to the Euclidean sphere $S^{n-1}$ which is compact and independent from $\xi$.

The condition $\scal>\sigma$ defines a second order open partial differential relation on the space of pointwise metrics over $M$.
The metrics in $g$ solve it globally, the metrics in $F$ solve it locally over $U$.

By the family version of the local flexibility lemma \cite[Addendum 3.4]{BH2022} of Bär-Hanke, we obtain an open neighbourhood $\dM\subset U_0\subset U$ and a family 
\begin{equation}\nonumber
f\colon K\times[0,1]\to C^{\infty}(M;T^\ast M\otimes T^\ast M)
\end{equation}
such that the following holds for all $\xi\in K$ and $s\in[0,1]$:
\begin{enumerate}
\item[\myicon]{$f(\xi,s)\in\mathscr{R}_{>\sigma}(M)$;}
\item[\myicon]{$f(\xi,0)=g(\xi)$;}
\item[\myicon]{$f(\xi,s)|_{U_0}=F(\xi,s)|_{U_0};$}
\item[\myicon]{$f(\xi,s)|_{M\setminus U}=g(\xi)|_{M\setminus U}$.}
\end{enumerate}
In particular, $f(\xi,s)|_{M\setminus\mathscr{U}}=g(\xi)|_{M\setminus\mathscr{U}}$.
We conclude properties $(a)-(g)$ for this $f$.

The remaining properties $(h)-(i)$ can be deduced from the actual construction in the flexibility lemma:
Accordingly there exists a smooth function $\tau\colon M\to[0,1]$ such that
\begin{equation}\nonumber
f(\xi,s)(p)=\left\{\begin{array}{ll} F(\xi,s\tau(p))(p), & p\in U, \\
         g(\xi)(p), & \text{else}.\end{array}\right.
\end{equation}
Together with~\eqref{bilipschitzproposition23}, this yields
\begin{equation}\nonumber
\frac{1}{2}|v|_{g(\xi)}\leq|v|_{f(\xi,s)}\leq 2|v|_{g(\xi)}
\end{equation}
for all $v\in TM,\xi\in K$ and $s\in[0,1]$.
Properties $(h)-(i)$ follow immediately.
\end{proof}
\begin{remark}\label{samegeodesics}
The identification $g=\mathrm{d}t^2+g_t$ refers to a geodesic collar neighbourhood $\varrho_g\colon V\to U^g$ of $g$.
The deformations $F(\xi,s)$ are defined in terms of the collars $\varrho_{g(\xi)}$.
As soon as $F(\xi,s)$ is a Riemannian metric on $U=U^{g(\xi)}$ with its own geodesic collars, one may ask for the meaning of $F(\xi,s)_t$.
In fact, there is no subtlety because the normal geodesics of $F(\xi,s)$ and $g(\xi)$ coincide near the boundary.
\end{remark}
\begin{remark}
Given $L>1$, we can arrange the deformation so that
\begin{equation}\nonumber
\frac{1}{L}|v|_{g(\xi)}\leq|v|_{f(\xi,s)}\leq L|v|_{g(\xi)}.
\end{equation}
\end{remark}
\begin{remark}
Strictly speaking, Bär-Hanke's local flexibility lemma applies to smooth manifolds without boundary.
To obtain an admissible setting, we attach a small cylinder to $\dM$ and extend the metrics $g(\xi)$ to the new manifold -- called $M^+$ -- in continuous dependence on $\xi$.

Specifically, there is a continuous map $g^+\colon K\to\mathscr{R}(M^+)$ such that the canonical inclusion $(M,g(\xi))\to(M^+,g^+(\xi))$ is an isometric embedding for every $\xi\in K$.
We can guarantee continuity for $g^+$ by means of the Seeley extension theorem \cite{Seeley}.
See \cite[Prop.~2.8]{Frerichs} for details.
\end{remark}
The last deformation step is for adjusting the $1$-jet along the boundary, again respecting lower scalar curvature bounds.
Several lemmas are required.
\begin{lemma}
Let $V$ be a finite dimensional real vector space and let $g_1$ and $g_0$ be two Euclidean scalar products on $V$ such that $\Vert g_1-g_0\Vert_{g_0}\leq\frac{1}{2}$.
Then
\begin{equation}\nonumber
|\tr_{g_1}(h)-\tr_{g_0}(h)|\leq 2\cdot\Vert g_1-g_0\Vert_{g_0}\cdot\Vert h\Vert_{g_0}
\end{equation}
holds for all symmetric bilinear forms $h$ on $V$.
Here $\Vert\cdot\Vert_{g_0}$ denotes the Frobenius norm on the space of symmetric bilinear forms induced by $g_0$.
\end{lemma}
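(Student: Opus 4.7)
The plan is to fix a $g_0$-orthonormal basis of $V$ and translate everything into matrix computations. In such a basis, $g_0$ is represented by the identity matrix $I$, while $g_1$ is represented by a symmetric positive definite matrix $A$, and a symmetric bilinear form $h$ by a symmetric matrix $H$. The key identifications are $\tr_{g_0}(h) = \tr(H)$, $\tr_{g_1}(h) = \tr(A^{-1}H)$, and for the Frobenius norms $\|g_1-g_0\|_{g_0} = \|A - I\|_F$ and $\|h\|_{g_0} = \|H\|_F$.

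With this set-up, the quantity to be estimated becomes
\begin{equation}\nonumber
|\tr_{g_1}(h) - \tr_{g_0}(h)| = |\tr((A^{-1}-I)H)|.
\end{equation}
By the Cauchy--Schwarz inequality for the Frobenius inner product, this is bounded by $\|A^{-1}-I\|_F\cdot\|H\|_F$. So the proof reduces to showing
\begin{equation}\nonumber
\|A^{-1}-I\|_F \leq 2\,\|A - I\|_F.
\end{equation}

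For this, I write $A^{-1} - I = A^{-1}(I - A)$ and use submultiplicativity $\|BC\|_F \leq \|B\|_{\mathrm{op}}\,\|C\|_F$, where $\|\cdot\|_{\mathrm{op}}$ is the operator norm, to get $\|A^{-1}-I\|_F \leq \|A^{-1}\|_{\mathrm{op}}\cdot\|A-I\|_F$. The hypothesis $\|A-I\|_F \leq \tfrac{1}{2}$ forces $\|A - I\|_{\mathrm{op}} \leq \tfrac12$ (since the operator norm is dominated by the Frobenius norm), so every eigenvalue of $A$ lies in $[\tfrac12,\tfrac32]$, giving $\|A^{-1}\|_{\mathrm{op}} \leq 2$. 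Plugging this in yields the desired bound, and combining with the Cauchy--Schwarz step completes the proof.

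I do not expect real obstacles here; the only subtle point is to recall that the operator norm is bounded by the Frobenius norm, which is exactly what lets us turn the smallness assumption $\|A-I\|_F \leq \tfrac12$ into a lower spectral bound on $A$ and hence an upper bound on $\|A^{-1}\|_{\mathrm{op}}$.
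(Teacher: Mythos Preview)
Your argument is correct. The paper itself does not prove this lemma but merely cites \cite[Lemma~24]{BH2023}; your self-contained matrix computation via a $g_0$-orthonormal basis, Cauchy--Schwarz for the Frobenius inner product, and the spectral bound $\|A^{-1}\|_{\mathrm{op}}\le 2$ coming from $\|A-I\|_F\le\tfrac12$ is a clean and complete proof.
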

\begin{proof}
See \cite[Lem.~3.4]{BH2023}.
\end{proof}
\begin{lemma}\label{bumpfunction}
There exists a constant $c_0>0$ such that for each $0<\delta\leq\frac{1}{2}$ there exists a smooth function $\chi_\delta\colon[0,\infty)\to\mathbb{R}$ with
\begin{enumerate}
\item[\myicon]{$\chi_\delta(t)=t$ for $t$ near $0$, $\chi_\delta(t)=0$ for $t\geq\sqrt{\delta}$ and $0\leq\chi_\delta(t)\leq\frac{\delta}{2}$ for all $t$,}
\item[\myicon]{$|\dot{\chi}_\delta(t)|\leq c_0$ for all $t$,}
\item[\myicon]{$-\frac{2}{\delta}\leq\ddot{\chi}_\delta(t)\leq 0$ for all $t\in[0,\delta]$ and $|\ddot{\chi}_\delta(t)|\leq c_0$ for all $t\in[\delta,\sqrt{\delta}]$.}
\end{enumerate}
\end{lemma}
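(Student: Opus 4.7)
The plan is to build $\chi_\delta$ by concatenating two explicit pieces on $[0,\delta]$ and $[\delta,\sqrt\delta]$, extended by $0$ beyond $\sqrt\delta$. The separation of scales is crucial: for $\delta\leq 1/2$ one has $\sqrt\delta-\delta\geq(1-1/\sqrt 2)\sqrt\delta\geq \tfrac14\sqrt\delta$, so the descent interval has length of order $\sqrt\delta$ while the concave-cap interval has length $\delta\ll\sqrt\delta$. This separation is what allows the second derivative of the descent piece to be bounded uniformly in $\delta$.

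For the first piece I would fix once and for all a smooth template $\phi\colon[0,1]\to[0,1/2]$ with $\phi(s)=s$ on $[0,s_0]$ for some $s_0>0$, with $\ddot\phi\in[-2,0]$ on $[0,1]$, and with $\phi^{(\ell)}(1)=0$ for all $\ell\geq 1$ and $\phi(1)=:A\in(0,1/2]$. A concrete source is the piecewise model that is linear of slope $1$ on $[0,1/4]$, has $\ddot\phi\equiv-2$ on $[1/4,3/4]$, and is constant on $[3/4,1]$; mollification with a small symmetric non-negative bump preserves the pointwise bounds $-2\leq\ddot\phi\leq 0$. Setting $\chi_\delta(t):=\delta\phi(t/\delta)$ on $[0,\delta]$ immediately yields $\chi_\delta(t)=t$ on $[0,s_0\delta]$, $\chi_\delta\in[0,\delta/2]$, $\ddot\chi_\delta(t)=\tfrac{1}{\delta}\ddot\phi(t/\delta)\in[-2/\delta,0]$, and $\chi_\delta^{(\ell)}(\delta)=0$ for $\ell\geq 1$, while $\chi_\delta(\delta)=A\delta$.

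For the second piece I would fix a smooth monotone template $\psi\colon[0,1]\to[0,1]$ with $\psi(0)=1$, $\psi(1)=0$, and $\psi^{(\ell)}(0)=\psi^{(\ell)}(1)=0$ for all $\ell\geq 1$, and set
\[
\chi_\delta(t):=A\,\delta\cdot\psi\!\left(\frac{t-\delta}{\sqrt\delta-\delta}\right)\qquad\text{for }t\in[\delta,\sqrt\delta],
\]
extended by $0$ beyond $\sqrt\delta$. Infinite flatness of $\phi$ at $s=1$ and of $\psi$ at both endpoints ensure that $\chi_\delta$ is $C^\infty$ globally. Using $\sqrt\delta-\delta\geq\tfrac14\sqrt\delta$ and $\delta\leq 1/2$, the chain rule gives $|\dot\chi_\delta|\leq 4A\sqrt\delta\,\|\dot\psi\|_\infty$ and $|\ddot\chi_\delta|\leq 16A\,\|\ddot\psi\|_\infty$ on $[\delta,\sqrt\delta]$, both bounded by universal constants. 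Setting $c_0:=\max\{\|\dot\phi\|_\infty,\,4A\|\dot\psi\|_\infty,\,16A\|\ddot\psi\|_\infty\}$ then verifies the three bullets directly.

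The main technical point is the construction of the template $\phi$: the lower bound $\ddot\phi\geq-2$ is tight, since starting from $\dot\phi(0)=1$ with $\ddot\phi\geq-2$ the slope cannot reach $0$ before $s=1/2$. The piecewise model above saturates this bound on $[1/4,3/4]$ and reaches its peak $\phi(3/4)=1/2$ exactly at the allowed limit, so the mollifier must be chosen small enough not to spill into the linear piece near $s=0$, and one may wish to shrink the plateau value slightly below $1/2$ before mollifying to leave margin for $\phi\leq 1/2$ throughout. Both requirements are satisfied by any symmetric mollifier of width less than $1/8$ combined with such a small shrinkage, so the construction goes through without further difficulty.
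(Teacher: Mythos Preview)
Your construction is correct: the two-piece assembly with rescaled templates $\phi$ and $\psi$ does everything required, and the key observation that $\sqrt\delta-\delta\geq\tfrac14\sqrt\delta$ is exactly what makes the second-derivative bound on $[\delta,\sqrt\delta]$ uniform in $\delta$. The paper itself does not give a proof but simply cites \cite[Lemma~25]{BH2023}, so there is no in-paper argument to compare against; your explicit construction is in the same spirit as the one in that reference. One very minor remark: in your final paragraph the caution about shrinking the plateau below $1/2$ is unnecessary, since convolution with a non-negative kernel of mass one preserves the pointwise bound $\phi\leq 1/2$; and for the linearity near $0$ it is cleanest to extend $\phi$ by $\phi(s)=s$ for $s<0$ before mollifying, after which the mollified function equals $s$ on $[0,1/4-\epsilon]$ exactly.
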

\begin{proof}
See \cite[Lem.~3.5]{BH2023}.
\end{proof}
\begin{remark}\label{bumpfunctionestimate}
Given $0<\delta\leq\frac{1}{2}$ and a function $\chi_\delta$ as in Lemma~\ref{bumpfunction}, it holds $\chi_\delta(t)\leq t$ for all $t\in[0,\infty)$:
Since $\ddot{\chi}_\delta\leq 0$ on $[0,\delta]$, we find $\dot{\chi}_\delta(t)\leq\dot{\chi}_\delta(0)=1$ for all $t\in[0,\delta]$. Thus, $\chi_\delta(t)\leq t$ by the mean value theorem.
For $t\in(\delta,\infty)$, we have $\chi_\delta(t)\leq\frac{\delta}{2}\leq t$.
\end{remark}
\begin{lemma}\label{cover2}
There exist countable covers $(U_i^1)_{i\in\mathbb{N}}\subset(U_i^2)_{i\in\mathbb{N}}\subset(U_i^3)_{i\in\mathbb{N}}$ of $\dM$ such that the following holds for all $i\in\mathbb{N}$:
\begin{enumerate}
\item[\myicon]{Each subset $U_i^1,U_i^2,U_i^3\subset\dM$ is open and relatively compact;}
\item[\myicon]{$I_i:=\{j\in\mathbb{N}:U_i^3\cap U_j^3\neq\varnothing\}$ is finite;}
\item[\myicon]{$\cl(U_i^1)\subset U_i^2$ and $\cl(U_i^2)\subset U_i^3$.}
\end{enumerate}
\end{lemma}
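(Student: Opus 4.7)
The plan is to adapt the construction from Lemma~\ref{cover1} by using three different ball radii at each lattice point, producing three nested locally finite covers of $\partial M$ at once.

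First I would choose a proper smooth embedding $\Phi\colon\partial M\to\mathbb{R}^N$ for some $N$ large enough (such an embedding exists since $\partial M$ is a paracompact smooth manifold without boundary, by Whitney's theorem). Pick three radii $r_1<r_2<r_3$ with $r_1\geq\sqrt{N}$, for instance $r_1=\sqrt{N}$, $r_2=\sqrt{N}+1$, $r_3=\sqrt{N}+2$. For every lattice point $i\in\mathbb{Z}^N$ and every $k\in\{1,2,3\}$, set
\begin{equation}\nonumber
\tilde U_i^k:=\Phi^{-1}(B(i,r_k))\subset\partial M,
\end{equation}
where $B(i,r_k)$ denotes the open Euclidean ball. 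After discarding empty sets and re-enumerating the remaining lattice indices by $\mathbb{N}$, this produces three families $(U_i^1)_{i\in\mathbb{N}}\subset(U_i^2)_{i\in\mathbb{N}}\subset(U_i^3)_{i\in\mathbb{N}}$.

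Next I would check the three required properties. Each $U_i^k$ is open as the preimage of an open set under the continuous map $\Phi$; it is relatively compact because its closure in $\partial M$ is contained in $\Phi^{-1}(\cl(B(i,r_k)))$, which is compact by properness of $\Phi$. For the nesting, since $\cl(B(i,r_1))\subset B(i,r_2)$ (open balls in $\mathbb{R}^N$ are strictly nested when radii strictly increase), the closed set $\Phi^{-1}(\cl(B(i,r_1)))$ is contained in $U_i^2$, and it contains $\cl(U_i^1)$; therefore $\cl(U_i^1)\subset U_i^2$. Similarly $\cl(U_i^2)\subset U_i^3$. The local finiteness of $(U_i^3)_{i\in\mathbb{N}}$ follows from the corresponding property of the Euclidean balls $(B(i,r_3))_{i\in\mathbb{Z}^N}$: each $B(i,r_3)$ meets $B(j,r_3)$ only for finitely many $j\in\mathbb{Z}^N$, and this transfers under $\Phi^{-1}$. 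Finally the covering property of the smallest family $(U_i^1)_{i\in\mathbb{N}}$—which automatically implies it for the larger two—holds because every point of $\mathbb{R}^N$ lies within Euclidean distance $\sqrt{N}/2$ of some point of $\mathbb{Z}^N$, so the balls $B(i,\sqrt{N})$ already cover $\mathbb{R}^N$ and hence the $U_i^1$ cover $\partial M$.

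There is no real obstacle here: the statement is a standard paracompactness consequence, and the only substantive input—a proper embedding into Euclidean space—was already used in Lemma~\ref{cover1}. The only minor care needed is the choice $r_1\geq\sqrt{N}$ to guarantee that the smallest family still covers $\partial M$, and the distinction between closure taken in $\partial M$ versus closure of the Euclidean ball, which is handled by the properness of $\Phi$.
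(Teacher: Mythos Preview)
Your proof is correct and follows essentially the same approach as the paper's: embed into Euclidean space properly and pull back three nested families of balls centered at lattice points. The paper embeds $M$ rather than $\partial M$ and uses radii $k\sqrt{N}$ for $k=1,2,3$ instead of your $\sqrt{N}+k-1$, but these are cosmetic differences with no effect on the argument.
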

\begin{proof}
The proof is similar to that of Lemma~\ref{cover1}.
Let $\Phi\colon M\to\mathbb{R}^N$ be a proper smooth embedding into some Euclidean space.
Then the sets $U_i^k:=\Phi^{-1}(B(i,k\sqrt{N}))$ for $k=1,2,3$ and $i\in\mathbb{Z}^{N}\cong\mathbb{N}$ do the job.
\end{proof}
We fix three covers $(U_i^1)_{i\in\mathbb{N}}\subset (U_i^2)_{i\in\mathbb{N}}\subset (U_i^3)_{i\in\mathbb{N}}$ of $\dM$ as in Lemma~\ref{cover2} together with a partition of unity $\psi=(\psi_i)_{i\in\mathbb{N}}$ subordinate to $(U_i^1)_{i\in\mathbb{N}}$.

The next lemma concerns the construction of closed collar neighbourhoods in manifolds with boundary.
This is not a trivial matter, as demonstrated by the following example:
Let $g$ be a Riemannian metric on $M$.
Suppose there exists a geodesic collar $\varrho_g\colon[0,\varepsilon)\times\dM\to M$ of uniform width $\varepsilon>0$.

If $\dM$ is compact, then $\varrho_g([0,\frac{\varepsilon}{2}]\times\partial M)$ is compact, hence closed in $M$.
However, if $\dM$ is non-compact, then $\varrho_g([0,\frac{\varepsilon}{2}]\times\partial M)$ is not closed in general, as can be seen with $M=\mathbb{R}^2_{x_1\geq 0}\setminus(\{0\}\times\mathbb{R}_{\leq 0})$ and $g=g_{\mathrm{eucl}}$.
If the deformation was done within such a collar, the resulting metric would not be smooth and not even continuous along the $x_1$-axis.
Nevertheless, there does exist a closed collar in our example, namely the set $A=\{(x,y)\in\mathbb{R}^2:0<x\leq y\}$.
Another closed collar is given by
\begin{equation}\nonumber
B=\Bigl(\bigcup_{i\geq 2}\Bigl[0,\frac{1}{i}\Bigr]\times\Bigl[\frac{1}{i},\frac{1}{i-1}\Bigr]\Bigr)\cup\Bigl(\Bigl[0,\frac{1}{2}\Bigr]\times[1,\infty)\Bigr).
\end{equation}
We generalise the collar neighbourhood $B$ to arbitrary manifolds. 
\begin{figure}[h]
\begin{center}
\begin{tikzpicture}
\node{\includegraphics[width=0.6\textwidth]{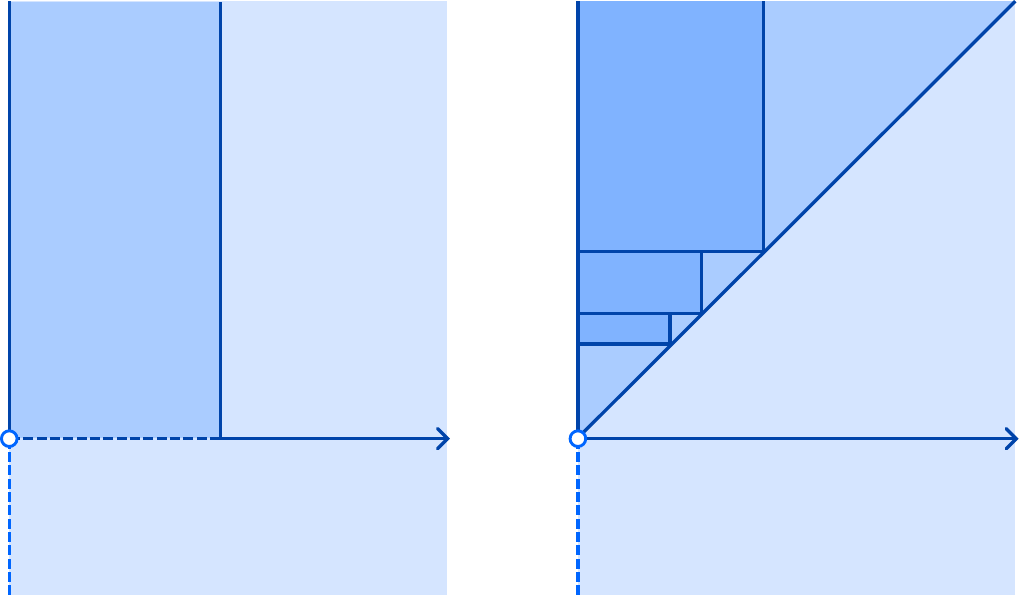}};
\node at (4.6,-1.6) {$x_1$};
\node at (1.5,1.6) {$B$};
\node at (3.2,2.2) {$A$};
\node at (-0.73,-1.6) {$x_1$};
\node at (1,2.2) {$\dM$};
\node at (-4.33,2.2) {$\dM$};
\node at (-3.7,-1.6) {$\varepsilon$};
\end{tikzpicture}
\end{center}
\caption{Collar neighbourhoods in $M=\mathbb{R}_{x_1\geq 0}^2\setminus(\{0\}\times\mathbb{R}_{\leq 0})$ -- $A$ and $B$ are closed, whereas the collar in the left picture is not}
\end{figure}

\begin{lemma}\label{lemma3.6}
Let $g$ be a Riemannian metric on $M$ and let $\varrho_g\colon V_{\eta}\to U^g_\eta$ be a geodesic collar neighbourhood.
For each $i\in\mathbb{N}$, let $0<\varepsilon_i<\min\{i^{-1},\inf_{p\in U_i^3}\eta(p)\}$ and $U_{\smallblacksquare\varepsilon_i}^{1}:=\varrho_g([0,\varepsilon_i]\times\cl(U_i^1))$.
Then $\cup_{i\in\mathbb{N}}\,U_{\smallblacksquare\varepsilon_i}^{1}\subset M$ is a closed neighbourhoood of $\partial M$.
\end{lemma}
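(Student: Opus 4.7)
The plan is to establish two things separately: that $E := \bigcup_{i\in\mathbb{N}}\overline{U}_{\varepsilon_i}^{1,g}$ contains an open neighbourhood of $\partial M$ in $M$, and that $E$ is closed in $M$.

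For the neighbourhood property I would use that $(U_i^1)_{i\in\mathbb{N}}$ already covers $\partial M$. Given $p \in \partial M$, pick $i_0$ with $p \in U_{i_0}^1$. The constraint $\varepsilon_{i_0} < \inf_{U_{i_0}^3}\eta$ combined with $U_{i_0}^1 \subset U_{i_0}^3$ places $[0,\varepsilon_{i_0}) \times U_{i_0}^1$ inside $V_\eta$, so its image under the diffeomorphism $\phi_g$ is open in $U_\eta^g$ (hence in $M$), contains $p$, and sits inside $\overline{U}_{\varepsilon_{i_0}}^{1,g} \subset E$.

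For closedness I would argue sequentially. Let $(p_n)$ be a sequence in $E$ with $p_n \to p \in M$, and write $p_n = \phi_g(t_n, q_n)$ with $t_n \in [0,\varepsilon_{i_n}]$ and $q_n \in \cl(U_{i_n}^1)$. If some subsequence of $(i_n)$ is constant, say $i_n = i$, then $p_n$ stays in $\overline{U}_{\varepsilon_i}^{1,g} = \phi_g([0,\varepsilon_i]\times\cl(U_i^1))$, which is compact (hence closed) because $U_i^1$ is relatively compact in $\partial M$, so $p \in E$. Otherwise, after extracting, $i_n \to \infty$, which forces $\varepsilon_{i_n} \leq 1/i_n \to 0$ and hence $t_n \to 0$. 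The radial geodesic $s \mapsto \phi_g(s, q_n)$ for $s \in [0, t_n]$ has $g$-length $t_n$ by the generalized Gauss lemma, so $d_g(p_n, \partial M) \leq t_n \to 0$. Since $d_g(\cdot, \partial M)$ is $1$-Lipschitz on $M$, the limit satisfies $d_g(p, \partial M) = 0$, and since $\partial M$ is closed in $M$ we conclude $p \in \partial M$. But then $p \in U_j^1$ for some $j$, and $p = \phi_g(0,p) \in \overline{U}_{\varepsilon_j}^{1,g} \subset E$.

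The main obstacle I anticipate is precisely this diverging-index case: a priori the limit $p$ could lie outside the collar $U_\eta^g$ (compare the example $M = \mathbb{R}^2_{x_1\geq 0}\setminus(\{0\}\times\mathbb{R}_{\leq 0})$ discussed before the lemma), so collar coordinates are unavailable at $p$ and one cannot simply take a limit of $(t_n, q_n)$ in $V_\eta$. The distance-to-boundary trick sidesteps this by exploiting $t_n \to 0$ to force $p$ onto $\partial M$, where coverage by the $U_j^1$ puts $p$ back in $E$ automatically.
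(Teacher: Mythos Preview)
Your proof is correct and follows essentially the same approach as the paper's. Both argue closedness sequentially via the same dichotomy: either the sequence is trapped in finitely many compact pieces, or the collar parameter $t_n$ is forced to $0$ by $\varepsilon_i < i^{-1}$, whence the distance-to-boundary argument places the limit on $\partial M$. Your Case~1 is organized slightly differently (a single repeated index rather than the paper's boundedness of the boundary projections, which invokes the finiteness of the $I_i$), and you additionally spell out the neighbourhood property, which the paper leaves implicit.
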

\begin{proof}
Let $(q_n)_{n\in\mathbb{N}}$ be a sequence in $\cup_{i\in\mathbb{N}}\,U_{\smallblacksquare\varepsilon_i}^{1}$ with $q_n\to q\in M$ for $n\to\infty$.
We show $q\in\cup_{i\in\mathbb{N}}\,U_{\smallblacksquare\varepsilon_i}^{1}$. 
To this end, let $(\varrho_g^{-1}(q_n))_n=(t_n,p_n)$ denote the associated sequence in the preimage.

\textit{First case:} There is $N\in\mathbb{N}$ such that $p_n\in\cup_{i\leq N}\cl(U_i^1)$ for all $n\in\mathbb{N}$.
Then it holds $q_n\in\cup_{j\in I_i,i\leq N}\,U_{\smallblacksquare\varepsilon_j}^{1}$ for all $n\in\mathbb{N}$.
The last set is closed as a finite union of compact sets. Therefore, $q\in\cup_{i\in\mathbb{N}}\,U_{\smallblacksquare\varepsilon_i}^{1}$.

\textit{Second case:} There is no such $N\in\mathbb{N}$. By the estimate $\varepsilon_i<i^{-1}$, we find a subsequence $t_{\varphi(n)}$ of $t_n$ such that $t_{\varphi(n)}\to 0$ for $n\to\infty$, that is $d_g(q_{\varphi(n)},\dM)\to 0$ for $n\to\infty$.
The triangle inequality gives
\begin{equation}\nonumber
d_g(q,\dM)\leq d_g(q,q_{\varphi(n)})+d_g(q_{\varphi(n)},\dM)\quad\text{for all $n\in\mathbb{N}$}
\end{equation}
so that $d_g(q,\dM)=0$. Since $\dM\subset M$ is closed, this yields $q\in\dM\subset\cup_{i\in\mathbb{N}}\,U_{\smallblacksquare\varepsilon_i}^{1}$.
\end{proof}
\begin{proposition}\label{proposition26}
Let $K$ be a compact Hausdorff space. 
Let $g_0\colon K\to C^{\infty}(\dM;T^\ast\dM\otimes T^\ast\dM)$ be a continuous family of Riemannian metrics on $\dM$ and let $h,k\colon K\to C^{\infty}(\dM;T^\ast\dM\otimes T^\ast\dM)$ be continuous families of symmetric $(0,2)$-tensor fields satisfying $\tr_{g_0}(h)\geq\tr_{g_0}(k)$.

Then there exists a smooth function $C_0=C_0(g,h,k)\in C^{\infty}(\dM;\mathbb{R}),C_0>0$ such that
\begin{enumerate}
\item[\myicon]{for every continuous family
\begin{equation}\nonumber
g\colon K\to\mathscr{R}_{>\sigma}(M)
\end{equation}
of $C$-normal metrics of scalar curvature greater than $\sigma$ with $C\in C^{\infty}(\dM;\mathbb{R}),C\geq C_0, g(\xi)_0=g_0(\xi)$ and $\mathrm{II}_{g(\xi)}=h(\xi)$ for all $\xi\in K$ and}
\item[\myicon]{for each neighbourhood $\mathscr{U}$ of $\dM$}
\end{enumerate}
there exists a continuous map
\begin{equation}\nonumber
f\colon K\times[0,1]\to\mathscr{R}_{>\sigma}(M)
\end{equation}
so that the following holds for all $\xi\in K$ and $s\in[0,1]$:
\begin{enumerate}
\item[(a)]{$f(\xi,0)=g(\xi)$;}
\item[(b)]{$f(\xi,s)$ is $C$-normal;}
\item[(c)]{$f(\xi,s)_0=g(\xi)_0$;}
\item[(d)]{$\mathrm{II}_{f(\xi,s)}=(1-s)\mathrm{II}_{g(\xi)}+sk(\xi)$;}
\item[(e)]{$f(\xi,s)=g(\xi)$ on $M\setminus\mathscr{U}$;}
\item[(f)]{$f(\xi,s)$ is quasi-isometric to $g(\xi)$ via the identity;}
\item[(g)]{if $g(\xi)$ is complete, then so is $f(\xi,s)$.}
\end{enumerate}
\end{proposition}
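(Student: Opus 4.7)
The plan is to adapt the Bär-Hanke construction from \cite[Prop. 26]{BH2023} to the non-compact setting using the countable covers from Lemma~\ref{cover2} and the closed-collar device of Lemma~\ref{lemma3.6}. First, apply Fact C1 to the family $g$ to obtain a common geodesic collar $\phi\colon V_\eta\to U^{g(\xi)}_\eta$ with $U^{g(\xi)}_\eta\subset\mathscr{U}$ for all $\xi\in K$. Since each $g(\xi)$ is $C$-normal, on $V_\eta$ it has the explicit form
\begin{equation}\nonumber
g(\xi)_t = g_0(\xi) - 2t\,h(\xi) - C\,t^2\, g_0(\xi).
\end{equation}
Using a smooth positive cutoff width $\delta\colon\dM\to(0,\tfrac12]$ to be chosen, and the bumps $\chi_\delta$ of Lemma~\ref{bumpfunction}, define the candidate deformation by
\begin{equation}\nonumber
f(\xi,s)_t := g(\xi)_t + 2s\,\chi_{\delta(p)}(t)\,\bigl(h(\xi)-k(\xi)\bigr)
\end{equation}
inside the collar, and by $g(\xi)$ outside. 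Near $t=0$, since $\chi_{\delta(p)}(t)=t$, the new $1$-jet is $\dot f(\xi,s)_0=-2((1-s)h+sk)$, i.e.\ $\mathrm{II}_{f(\xi,s)}=(1-s)h+sk$ and $f(\xi,s)$ is still $C$-normal; for $t\geq\sqrt{\delta(p)}$, $\chi_{\delta(p)}=0$ so $f(\xi,s)=g(\xi)$, in particular (e) is clear.

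The key check is that $\scal_{f(\xi,s)}>\sigma$. Apply the fundamental formula \eqref{magicformula} to $f(\xi,s)_t$. The term $-\tr_{f(\xi,s)_t}(\ddot f(\xi,s)_t)$ expands as
\begin{equation}\nonumber
-\tr_{f(\xi,s)_t}(\ddot f(\xi,s)_t) \;=\; 2C\,\tr_{f(\xi,s)_t}(g_0) \;-\; 2s\,\ddot\chi_{\delta(p)}(t)\,\tr_{f(\xi,s)_t}(h-k).
\end{equation}
On $[0,\delta(p)]$, the second summand is non-negative because $\ddot\chi_\delta\leq 0$ there and $\tr_{g_0}(h-k)\geq 0$ by hypothesis (combined with the first auxiliary lemma to control the trace change, so the deformed trace is still $\geq 0$ up to a controlled error). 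On $[\delta(p),\sqrt{\delta(p)}]$, $|\ddot\chi_{\delta(p)}|$ is bounded by $c_0$, so the perturbation is an error of size $O(c_0\cdot|h-k|)$. The remaining curvature terms differ from those of $g(\xi)$ by quantities that tend to zero with $\delta(p)$. Therefore if $C_0$ dominates (uniformly over $\cl(U_i^2)$, $\xi\in K$) the $C^0$-norms of $\tr_{g_0}(\ddot g_0)$ and of all these $\delta$-small error quantities for the initial metric, the inequality $\scal_{f(\xi,s)}>\sigma$ persists for $C\geq C_0$ and for $\delta$ sufficiently small on each $\cl(U_i^2)$.

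The non-compact boundary forces us to make $\delta$ and $C_0$ genuine functions on $\dM$ rather than constants. Here is where the covers from Lemma~\ref{cover2} enter: on each relatively compact $\cl(U_i^2)$, the compactness of both $\cl(U_i^2)$ and $K$ provides positive constants $\delta_i,C_i$ for which the scalar curvature estimate above and a uniform bi-Lipschitz bound $\tfrac12|\cdot|_{g(\xi)}\leq|\cdot|_{f(\xi,s)}\leq 2|\cdot|_{g(\xi)}$ hold; the latter handles (f) and (g) by completely standard arguments (cf.\ \eqref{bilipschitzproposition23}). Interpolate $\delta(p):=\sum_i\psi_i(p)\min_{j\in I_i}\delta_j$ and analogously $C_0(p):=\sum_i\psi_i(p)\max_{j\in I_i}C_j$, yielding smooth positive functions. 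The collar width $\varepsilon_i$ for the support is chosen with $\sqrt{\delta(p)}<\varepsilon_i<i^{-1}$ on $U_i^3$, and Lemma~\ref{lemma3.6} ensures that the union $\bigcup_i\phi([0,\varepsilon_i]\times\cl(U_i^1))$ is a closed neighborhood of $\dM$, so $f(\xi,s)$ is globally smooth on $M$.

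The main obstacle, therefore, is not the Gauss-formula computation (which is essentially the same as in \cite{BH2023}) but the careful coordination of the three ingredients: a locally varying $\delta(p)$ so that $\chi_{\delta(p)}(t)$ is still jointly smooth in $(t,p)$, the $\xi$-uniform curvature estimates on each $\cl(U_i^2)$, and the closed-collar geometry of Lemma~\ref{lemma3.6} ensuring that the support of the perturbation is closed. Once these are in place, properties (a)–(d) fall out of the explicit formula for $f(\xi,s)_t$ and its first two $t$-derivatives at $t=0$, while (e)–(g) follow respectively from compact support, from the uniform bi-Lipschitz bound, and from completeness being preserved under bi-Lipschitz equivalence.
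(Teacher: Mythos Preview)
Your overall strategy matches the paper's: the same deformation formula, the same Gauss--formula analysis via~\eqref{magicformula}, and the same use of the covers from Lemma~\ref{cover2} together with the closed-collar device of Lemma~\ref{lemma3.6}. The implementation differs at one technical point, and this is where your proposal has a genuine gap.

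You propose a smooth cutoff width $\delta(p)$ and work with $\chi_{\delta(p)}(t)$. You correctly flag the resulting obstacle---joint smoothness of $(t,p)\mapsto\chi_{\delta(p)}(t)$---but do not resolve it. Lemma~\ref{bumpfunction} produces $\chi_\delta$ for each fixed $\delta$ with no claim of smooth (or even continuous) dependence on $\delta$; even if one arranged such dependence, the $p$-derivatives of $\chi_{\delta(p)}(t)$ entering $\scal_{\gamma_t}$ would involve $\partial_\delta\chi_\delta$ and need separate control. The paper sidesteps this completely: it keeps \emph{constant} widths $\delta_j$ on the pieces and glues via the partition of unity on the perturbation term itself,
\[
f^\delta(\xi,s)_t \;=\; (1-Ct^2)\,g_0(\xi)\;-\;2t\,h(\xi)\;+\;\sum_{j}\psi_j(p)\cdot 2s\,\chi_{\delta_j}(t)\,(h(\xi)-k(\xi)).
\]
This is manifestly smooth as a locally finite sum of smooth functions, and the curvature estimates then proceed piece by piece, with only $\Vert\psi_j\Vert_{C^2}$ entering as extra (harmless) data. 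Near $t=0$ one still has $\sum_j\psi_j\,\chi_{\delta_j}(t)=t$, so $C$-normality and the interpolation of~$\mathrm{II}$ follow exactly as in your computation.

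Two smaller points. First, Fact~C1 does not give a \emph{common} collar: the maps $\phi_{g(\xi)}$ still depend on $\xi$. The paper instead invokes the preceding uniformisation step (Corollary~\ref{uniformization}) to assume all $g(\xi)$ share the same normal exponential map, and hence a common collar on which $C$-normality holds. Second, keep track of what $C_0$ is allowed to depend on: it must be determined by $g_0,h,k$ (and $\sigma$) \emph{before} $g$ and $C$ are given. The paper achieves this by showing that all implied constants in the $\lesssim$-estimates are independent of $C$, obtaining $\scal_\gamma\geq m_i\cdot C$ with $m_i=m_i(g_0,h,k)$, and then building $C_0$ from the $m_i$ and $\sup|\sigma|$ on the pieces. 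Your phrasing ``$C_0$ dominates \dots\ $\tr_{g_0}(\ddot g_0)$'' is circular for $C$-normal metrics, since $\ddot g_0=-2C g_0$ already contains $C$.
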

\begin{proof}
Let $\mathscr{U}$ be a neighbourhood of $\dM$.
Let $C\in C^{\infty}(\dM;\mathbb{R})$ be a positive smooth function and let $g\colon K\to\mathscr{R}_{>\sigma}(M)$ be a continuous family of $C$-normal metrics of scalar curvature greater than $\sigma$ with $g(\xi)_0=g_0(\xi)$ and $\mathrm{II}_{g(\xi)}=h(\xi)$ for all $\xi\in K$.

Given that this proposition follows Corollary~\ref{uniformization} and Proposition~\ref{proposition23} within the deformation principle, we can assume that all metrics in $g$ have the same normal exponential map and that they satisfy the condition of $C$-normality on a common neighbourhood $U_0$ of $\dM$.

We choose a continuous positive function $\eta\colon\dM\to\mathbb{R}$ such that $\varrho\colon V_\eta\to U_\eta$ is a common geodesic collar neighbourhood for $g$ with $U_\eta\subset U_0\cap\mathscr{U}$.
For each $i\in\mathbb{N}$, let $0<\eta_i<\inf_{p\in U_i^3}\eta(p)$ be some fixed distance from the boundary.

Later, we will consider the $C^2$-norm $\Vert\cdot\Vert_{C^2}$ for tensor fields on $\dM$, which is supposed to be the maximum of $C^2$-norms induced by the metrics $g_0(\xi),\xi\in K$.
Set
\begin{equation}\nonumber
C_i:=\Vert C\Vert_{C^2(U_i^3)}.
\end{equation}
The deformation will be performed over each piece $U_i^2\subset\dM$, gluing everything together by the partition of unity $\psi$.
To this end, let $\delta=(\delta_i)_{i\in\mathbb{N}}$ be a family of real numbers satisfying
\begin{equation}\nonumber
0<\delta_i<\min\left\{\frac{1}{2},i^{-2},\min_{j\in I_i}\eta_j^2,\min_{j\in I_i}C_j^{-2}\right\}
\end{equation}
for all $i\in\mathbb{N}$.
For each $i\in\mathbb{N}$, let $\chi_{\delta_i}$ be a function as in Lemma~\ref{bumpfunction}.
We define
\begin{align}\nonumber
f^{\delta_i}\colon&K\times[0,1]\to C^{\infty}(U_{\smallsquare\eta_i}^2;T^\ast M\otimes T^\ast M),\\\nonumber
&f^{\delta_i}(\xi,s)=\mathrm{d}t^2+(1-Ct^2)\cdot g_0(\xi)-2t\cdot h(\xi)+2s\chi_{\delta_i}(t)\cdot(h(\xi)-k(\xi))
\end{align}
where $U_{\smallsquare\eta_i}^2:=\varrho([0,\eta_i)\times U_i^2)$.
Given $(t,p)\in[0,\eta_i)\times U_i^2$, it holds
\begin{align}\nonumber
\Vert f^{\delta_i}(\xi,s)(t,p)-g(\xi)(t,p)\Vert_{g(\xi)(t,p)}&=2s\chi_{\delta_i}(t)\cdot\Vert h(\xi)(p)-k(\xi)(p)\Vert_{g(\xi)_t(p)}\\\nonumber
&\leq\delta_i\cdot\sup_{\substack{\xi\in K,t\in[0,\eta_i]\\p\in U_i^2}}\Vert h(\xi)(p)-k(\xi)(p)\Vert_{g(\xi)_t(p)}.
\end{align}
Since positive definiteness is an open condition, $f^{\delta_i}$ becomes a family of Riemannian metrics on $U_{\smallsquare\eta_i}^2$ for sufficiently small $\delta_i$.
Then put
\begin{align}\nonumber
f^\delta\colon&K\times[0,1]\to C^{\infty}(M;T^\ast M\otimes T^\ast M),\\\nonumber
&f^\delta(\xi,s)=\left\{\begin{array}{l}\mathrm{d}t^2+(1-Ct^2)\cdot g_0(\xi)-2t\cdot h(\xi)+\sum\limits_{i\in\mathbb{N}}\psi_i\cdot 2s\chi_{\delta_i}(t)\cdot(h(\xi)-k(\xi))\\
\phantom{g(\xi)}\qquad\text{on $\bigcup\limits_{i\in\mathbb{N}}U_{\smallsquare\eta_i}^2$,}\\
         g(\xi)\qquad\text{on $M-\bigcup\limits_{i\in\mathbb{N}}U_{\smallsquare\eta_i}^2$}\end{array}\right.
\end{align}
for $\delta=(\delta_i)_{i\in\mathbb{N}}$ as small as above.
This expression is positive definite at each point $(t,p)\in\bigcup_{i\in\mathbb{N}}U_{\smallsquare\eta_i}^2$, as can be seen from the formula
\begin{equation}\label{001}
f^\delta(\xi,s)=\sum\limits_{j\in\mathbb{N}}\psi_j\cdot(\mathrm{d}t^2+(1-Ct^2)\cdot g_0(\xi)-2t\cdot h(\xi)+2s\chi_{\delta_j}(t)\cdot(h(\xi)-k(\xi))).
\end{equation}
If $p\in U_j^2$ for some $j\in\mathbb{N}$ and $t<\eta_j$, then the respective summand is positive definite by the above.
If $t\geq\eta_j$, then $\chi_{\delta_j}(t)=0$ and the other three summands are equal to $g(\xi)(t,p)$.
There are no more summands to take into account because $\psi_j(p)=0$ for all $j\in\mathbb{N}$ with $p\notin U_j^2$.

We also verify the smoothness of $f^\delta(\xi,s)$ and the continuity of $f^\delta$ with respect to $(\xi,s)$.
This is obvious once we know
\begin{equation}\nonumber
f^\delta(\xi,s)=g(\xi)\quad\text{on $M-\bigcup_{i\in\mathbb{N}}\hspace{1pt}U_{\smallblacksquare\sqrt{\delta_i}}^1$},
\end{equation}
the latter beeing open in $M$ by Lemma~\ref{lemma3.6}.
To justify, let $(t,p)\in\bigcup_{i\in\mathbb{N}}U_{\smallsquare\eta_i}^2-\bigcup_{i\in\mathbb{N}}\hspace{1pt}U_{\smallblacksquare\sqrt{\delta_i}}^1$.
That is $(t,p)\in U_{\smallsquare\eta_i}^2$ for some $i\in\mathbb{N}$.
Every summand in~\eqref{001} is either $\psi_j(p)\cdot g(\xi)(t,p)$ (if $p\in U_j^1$) or zero (if $p\notin U_j^1$).
Therefore, $f^\delta(\xi,s)(t,p)=\sum_{j\in\mathbb{N}}\psi_j(p)\cdot g(\xi)(t,p)=g(\xi)(t,p)$.

\begin{figure}[h]
\begin{tikzpicture}
\node{\includegraphics[width=0.7\textwidth]{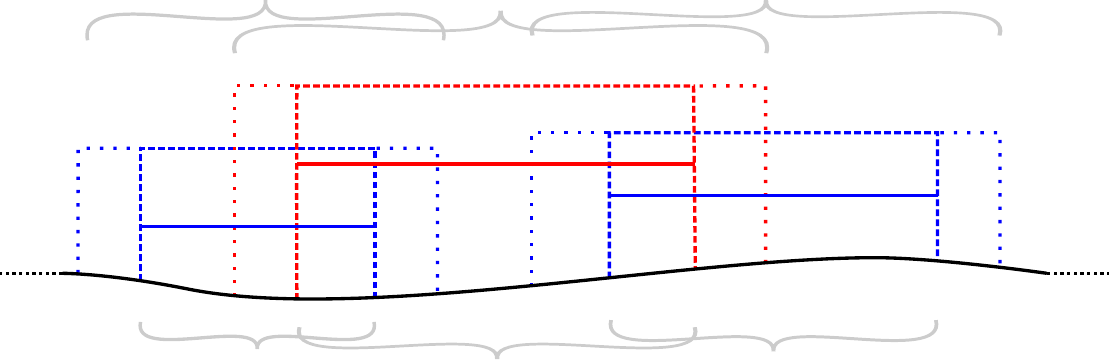}};
\node at (-0.5,-2.1) {$U_1^1$};
\node at (-0.7,-0.125) {$\sqrt{\delta_1}$};
\node at (-0.7,0.7) {$\eta_1$};
\node at (-0.45,2) {$U_1^2$};
\node at (2.275,-2.1) {$U_2^1$};
\node at (2.225,2.1) {$U_2^2$};
\node at (2.625,-0.45) {$\sqrt{\delta_2}$};
\node at (2.625,0.225) {$\eta_2$};
\node at (-2.925,-2.1) {$U_3^1$};
\node at (-2.8,2.1) {$U_3^2$};
\node at (-3.7,-0.76) {$\sqrt{\delta_3}$};
\node at (-3.7,0.075) {$\eta_3$};
\end{tikzpicture}
\caption{Construction for $f^\delta$ by boxes}
\end{figure}

The main task is to arrange the deformation so that $\scal_{f^\delta(\xi,s)}>\sigma$ for all $\xi\in K$ and $s\in[0,1]$.
Let $i\in\mathbb{N}$.
In the following we use the notation $\lesssim$ to mean that the left hand side is bounded by the right hand side multiplied with a positive constant that only depends on $g_0,h$ and $k$ on $U_i^3$.
In particular, it is independent from $\delta,t,p,\xi,s$ and $C$.

Furthermore, a statement is said to be satisfied for sufficiently small $(\delta_j)_{j\in I_i}$ if it is satisfied for all tuples $(\delta_j)_{j\in I_i}$ whose maximum is smaller than a certain constant.
This constant depends only on $g_0,h$ and $k$ on $U_i^3$ and on $C$ on $\bigcup_{j\in I_i}U_j^3$.

On $U_i^3$ we use the abbreviation
\begin{align}\nonumber
&\gamma:=f^\delta(\xi,s)\quad\text{and}\\\nonumber
&\gamma_t:=f^\delta(\xi,s)_t=(1-Ct^2)\cdot g_0(\xi)-2t\cdot h(\xi)+\sum_{j\in I_i}\psi_j\cdot 2s\chi_{\delta_j}(t)\cdot(h(\xi)-k(\xi))
\end{align}
for $t\in[0,\eta_i)$.
It holds
\begin{align}\nonumber
\scal_\gamma&=\scal_{\gamma_t}+3\tr(W_t^2)-\tr(W_t)^2-\tr_{\gamma_t}(\ddot{\gamma}_t)\\\label{3summands}
&\geq\scal_{\gamma_t}-\tr_{\gamma_t}(\mathrm{II}_t)^2-\tr_{\gamma_t}(\ddot{\gamma}_t)
\end{align}
since $W_t$ is a field of self-adjoint endomorphisms.
Moreover,
\begin{align}\nonumber
&\mathrm{II}_t=-\frac{1}{2}\dot{\gamma}_t=h(\xi)+Ct\cdot g_0(\xi)-\sum_{j\in I_i}\psi_j\cdot s\dot{\chi}_{\delta_j}(t)\cdot(h(\xi)-k(\xi)),\\\nonumber
&\ddot{\gamma}_t=-2C\cdot g_0(\xi)+\sum_{j\in I_i}\psi_j\cdot 2s\ddot{\chi}_{\delta_j}(t)\cdot(h(\xi)-k(\xi)).
\end{align}
Note that normal geodesics of $f^\delta(\xi,s)$ and $g(\xi)$ coincide (cf. Remark~\ref{samegeodesics}).
We investigate each summand in~\eqref{3summands} on the set $[0,\eta_i)\times U_i^2$.
One can even restrict to $[0,\max_{j\in I_i}\sqrt{\delta_j})\times U_i^2$, since $\gamma$ is equal to $g(\xi)$ on $[\max_{j\in I_i}\sqrt{\delta_j},\eta_i)\times U_i^2$.

\textit{Regarding $\scal_{\gamma_t}$}:
For every $t\in[0,\max_{j\in I_i}\sqrt{\delta_j})$, it holds
\begin{align}\nonumber
\Vert\gamma_t-\gamma_0\Vert_{C^2(U_i^3)}&=\Bigl\Vert-Ct^2\cdot\gamma_0-2t\cdot h(\xi)+\sum_{j\in I_i}\psi_j\cdot 2s\chi_{\delta_j}(t)\cdot(h(\xi)-k(\xi))\Bigr\Vert_{C^2(U_i^3)}\\\nonumber
&\leq t^2\cdot\Vert C\cdot\gamma_0\Vert_{C^2(U_i^3)}+2t\cdot\Vert h(\xi)\Vert_{C^2(U_i^3)}+2s\sum_{j\in I_i}\chi_{\delta_j}(t)\cdot\Vert\psi_j\cdot(h(\xi)-k(\xi))\Vert_{C^2(U_i^3)}\\\nonumber
&\leq 4t^2\cdot C_i\cdot\Vert\gamma\Vert_{C^2(U_i^3)}+2t\cdot\Vert h(\xi)\Vert_{C^2(U_i^3)}+8s\sum_{j\in I_i}\chi_{\delta_j}(t)\cdot\Vert\psi_j\Vert_{C^2(U_i^3)}\cdot\Vert h(\xi)-k(\xi)\Vert_{C^2(U_i^3)}.
\end{align}
We have
\begin{equation}\nonumber
t^2\cdot C_i\leq t\cdot\max_{j\in I_i}\sqrt{\delta_j}\cdot C_i\leq t.
\end{equation}
Together with the inequality $2s\chi_{\delta_j}(t)\leq 2t$ from Remark~\ref{bumpfunctionestimate}, we find
\begin{align}\nonumber
\Vert\gamma_t-\gamma_0\Vert_{C^2(U_i^3)}&\leq 4t\cdot\Vert\gamma_0\Vert_{C^2(U_i^3)}+2t\cdot\Vert h(\xi)\Vert_{C^2(U_i^3)}+8t\sum_{j\in I_i}\Vert\psi_j\Vert_{C^2(U_i^3)}\cdot\Vert h(\xi)-k(\xi)\Vert_{C^2(U_i^3)}.
\end{align}
Hence $\Vert\gamma_t-\gamma_0\Vert_{C^2(U_i^3)}\lesssim t+t+t=3t\leq 3\max_{j\in I_i}\sqrt{\delta_j}$.
This yields
\begin{equation}\label{1}
\scal_{\gamma_t}\gtrsim -1\quad\text{on $U_i^2$}
\end{equation}
for sufficiently small $(\delta_j)_{j\in I_i}$.

\textit{Regarding $\tr_{\gamma_t}(\mathrm{II}_t)$}:
Let $(\delta_j)_{j\in I_i}$ be sufficiently small so that $\Vert\gamma_t-\gamma_0\Vert_{\gamma_0}\leq\frac{1}{2}$ for all $(t,p)\in[0,\max_{j\in I_i}\sqrt{\delta_j})\times U_i^2$.
It holds
\begin{align}\nonumber
|\tr_{\gamma_t}(\mathrm{II}_t)|&\leq|\tr_{\gamma_0}(\mathrm{II}_t)|+|\tr_{\gamma_t}(\mathrm{II}_t)-\tr_{\gamma_0}(\mathrm{II}_t)|\\\nonumber
&\lesssim|\tr_{\gamma_0}(\mathrm{II}_t)|+\Vert\gamma_t-\gamma_0\Vert_{\gamma_0}\cdot\Vert\mathrm{II}_t\Vert_{\gamma_0}\\\nonumber
&\lesssim|\tr_{\gamma_0}(\mathrm{II}_t)|+\Vert\mathrm{II}_t\Vert_{\gamma_0}.
\end{align}
By the estimate $|\dot{\chi}_{\delta_j}(t)|\leq c_0$ (cf. Lemma~\ref{bumpfunction}), we obtain
\begin{align}\nonumber
\Vert\mathrm{II}_t\Vert_{\gamma_0}&=\Vert h(\xi)+Ct\cdot\gamma_0-\sum_{j\in I_i}\psi_j\cdot s\dot{\chi}_{\delta_j}(t)\cdot(h(\xi)-k(\xi))\Vert_{\gamma_0}\\\nonumber
&\leq\Vert h(\xi)\Vert_{\gamma_0}+\sqrt{n-1}\cdot Ct+\sum_{j\in I_i}\psi_j\cdot s|\dot{\chi}_{\delta_j}(t)|\cdot\Vert h(\xi)-k(\xi)\Vert_{\gamma_0}\\\nonumber
&\lesssim 1+C_i\cdot\max_{j\in I_i}\sqrt{\delta_j}+\sum_{j\in I_i}\psi_j\\\nonumber
&\leq 3
\end{align}
and $|\tr_{\gamma_0}(\mathrm{II}_t)|\leq\sqrt{n-1}\cdot\Vert\mathrm{II}_t\Vert_{\gamma_0}\lesssim 1$.
Hence
\begin{equation}\label{2}
|\tr_{\gamma_t}(\mathrm{II}_t)|\lesssim|\tr_{\gamma_0}(\mathrm{II}_t)|+\Vert\mathrm{II}_t\Vert_{\gamma_0}\lesssim 1.
\end{equation}
\textit{Regarding $-\tr_{\gamma_t}(\ddot{\gamma}_t)$}:
The trace is given by
\begin{equation}\label{trace}
-\tr_{\gamma_t}(\ddot{\gamma}_t)=-\sum_{j\in I_i}\psi_j\cdot 2s\ddot{\chi}_{\delta_j}(t)\cdot\tr_{\gamma_t}(h(\xi)-k(\xi))+2C\cdot\tr_{\gamma_t}(\gamma_0).
\end{equation}
We consider the first sum:
Let $j_0\in I_i$ be fixed.
Since $\tr_{\gamma_0}(h)\geq\tr_{\gamma_0}(k),\ddot{\chi}_{\delta_{j_0}}(t)\leq 0$ and $\Vert\gamma_t-\gamma_0\Vert_{\gamma_0}\lesssim t$ for all $(t,p)\in[0,\delta_{j_0}]\times U_i^2$, it holds
\begin{align}\nonumber
\ddot{\chi}_{\delta_{j_0}}(t)\cdot\tr_{\gamma_t}(h(\xi)-k(\xi))&\leq\ddot{\chi}_{\delta_{j_0}}(t)\cdot\tr_{\gamma_t}(h(\xi)-k(\xi))-\ddot{\chi}_{\delta_{j_0}}(t)\cdot\tr_{\gamma_0}(h(\xi)-k(\xi))\\\nonumber
&\leq|\ddot{\chi}_{\delta_{j_0}}(t)|\cdot|\tr_{\gamma_t}(h(\xi)-k(\xi))-\tr_{\gamma_0}(h(\xi)-k(\xi))|\\\nonumber
&\lesssim|\ddot{\chi}_{\delta_{j_0}}(t)|\cdot\Vert\gamma_t-\gamma_0\Vert_{\gamma_0}\cdot\Vert h(\xi)-k(\xi)\Vert_{\gamma_0}\\\nonumber
&\lesssim\frac{2}{\delta_{j_0}}\cdot t\\\nonumber
&\leq 2
\end{align}
for such $t$ and $p$.
This calculation is where the estimate $\chi_{\delta_{j_0}}(t)\leq t$ comes into play, contrary to \cite[Prop.~3.6]{BH2023}.

For $t\in[\delta_{j_0},\sqrt{\delta_{j_0}})$, we find
\begin{align}\nonumber
|\ddot{\chi}_{\delta_{j_0}}(t)\cdot\tr_{\gamma_t}(h(\xi)-k(\xi))|&\lesssim|\tr_{\gamma_t}(h(\xi)-k(\xi))|\\\nonumber
&\lesssim|\tr_{\gamma_0}(h(\xi)-k(\xi))|+\Vert\gamma_t-\gamma_0\Vert_{\gamma_0}\cdot\Vert h(\xi)-k(\xi)\Vert_{\gamma_0}\\\nonumber
&\lesssim 1+1=2.
\end{align}
Finally, $\ddot{\chi}_{\delta_{j_0}}(t)=0$ for $t\geq\sqrt{\delta_{j_0}}$.
In total, we have
\begin{equation}\nonumber
\ddot{\chi}_{\delta_{j_0}}(t)\cdot\tr_{\gamma_t}(h(\xi)-k(\xi))\lesssim 1
\end{equation}
on $[0,\max_{j\in I_i}\sqrt{\delta_j})\times U_i^2$ so that
\begin{equation}\label{31}
\sum_{j\in I_i}\psi_j\cdot 2s\ddot{\chi}_{\delta_j}(t)\cdot\tr_{\gamma_t}(h(\xi)-k(\xi))\lesssim 1.
\end{equation}
The second summand in~\eqref{trace} satisfies
\begin{align}\nonumber
|\tr_{\gamma_t}(\gamma_0)-n+1|&=|\tr_{\gamma_t}(\gamma_0)-\tr_{\gamma_0}(\gamma_0)|\\\nonumber
&\lesssim\Vert\gamma_t-\gamma_0\Vert_{\gamma_0}\cdot\Vert\gamma_0\Vert_{\gamma_0}\\\nonumber
&\lesssim\max_{j\in I_i}\sqrt{\delta_j}.
\end{align}
This means
\begin{equation}\nonumber
|\tr_{\gamma_t}(\gamma_0)-n+1|\leq\tfrac{1}{2}
\end{equation}
for sufficiently small $(\delta_j)_{j\in I_i}$, respectively
\begin{equation}\label{32}
2C\cdot\tr_{\gamma_t}(\gamma_0)\geq 2(n-\tfrac{3}{2})C.
\end{equation}
Equations ~\eqref{trace},~\eqref{31} and ~\eqref{32} give
\begin{equation}\label{3}
-\tr_{\gamma_t}(\ddot{\gamma}_t)\gtrsim C
\end{equation}
if $C$ is large enough on $U_i^2$.
By~\eqref{3summands},~\eqref{1},~\eqref{2} and ~\eqref{3}, we obtain
\begin{equation}\label{3sum}
\scal_{\gamma}\gtrsim C,
\end{equation}
provided that $C$ is large enough on $U_i^2$.

Equations~\eqref{3} and~\eqref{3sum} make a condition on $C$ on $U_i^2$.
If we carry out the same calculation for every box $U^2_{\smallsquare\eta_j},j\in\mathbb{N}$, this will add more conditions to $C$ on $U_i^2$.
In fact, only the finite number of boxes $U^2_{\smallsquare\eta_j},j\in I_i$ can make a contribution for $U_i^2$.
This is why there exists a smooth function $C_{00}\in C^{\infty}(\partial M)$ which is sufficiently large on all pieces $U_j^2,j\in\mathbb{N}$ in the sense of the above calculation.

Given $C\geq C_{00}$, we impose finitely many conditions on $(\delta_j)_{j\in I_i}$, and in particular on $\delta_i$, in order to obtain equation~\eqref{3sum}.
Repeating the procedure for all boxes $U^2_{\smallsquare\eta_j},j\in\mathbb{N}$ will add only finitely many conditions  on $\delta_i$.
This allows the following conclusion:

\textit{Let $C\in C^{\infty}(\dM;\mathbb{R}),C\geq C_{00}$.
There exists a sequence of positive constants $(m_i)_{i\in\mathbb{N}}=\break m_i(g_0|_{U_i^3},h|_{U_i^3},k|_{U_i^3})$ such that for each collection $\delta=(\delta_i)_{i\in\mathbb{N}}$ of sufficiently small numbers, it holds
\begin{equation}\nonumber
\scal_\gamma\geq m_i\cdot C\quad\text{on\;\,$[0,\max_{j\in I_i}\sqrt{\delta_j})\times U_i^2$ for all $i\in\mathbb{N}$.}
\end{equation}
}
Finally,
\begin{equation}\nonumber
C_0\colon\dM\to\mathbb{R}\;,\;C_0=C_{00}+\sum_{i\in\mathbb{N}}2\psi_i\cdot\max_{j\in I_i}\bigl(m_j^{-1}\cdot\sup_{[0,\eta_j]\times U_j^2}|\sigma|\bigr)
\end{equation}
is a function as required in the proposition.

It remains to arrange properties $(f)$ and $(g)$:
Given $L>1$, we will be able to choose $\delta=(\delta_i)_{i\in\mathbb{N}}$ small enough so that
\begin{equation}\nonumber
\frac{1}{L}|v|_{g(\xi)}\leq|v|_{f(\xi,s)}\leq L|v|_{g(\xi)}
\end{equation}
for all $v\in TM,\xi\in K$ and $s\in[0,1]$.
For this, let $i\in\mathbb{N}$.
Without great loss of generality, one can assume that each closure $\cl(U_{\smallsquare\eta_j}^2),j\in I_i$ is covered by a single chart.
Then we add the following condition on $\delta$:
\begin{equation}\nonumber
0<\delta_i<\min_{j\in I_i}\inf\{|v|_{g(\xi)(q)}^4:\xi\in K,q\in U^2_{\smallsquare\eta_j},v\in S^{n-1}\}.
\end{equation}
For every $q\in U^2_{\smallsquare\eta_j},v\in S^{n-1},\xi\in K$ and $s\in[0,1]$, it holds
\begin{equation}\nonumber
\frac{|v|_{f(\xi,s)}^2}{|v|_{g(\xi)}^2}=1+\frac{1}{|v|_{g(\xi)}^2}\sum_{j\in I_i}\psi_j\cdot 2s\chi_{\delta_j}(t)\cdot(h(\xi)-k(\xi))(v^\mathsf{T},v^\mathsf{T})
\end{equation}
where $v^{\mathsf{T}}$ is the tangential part of $v$ with respect to $\mathrm{d}\varrho$.
We estimate
\begin{align}\nonumber
\frac{|v|_{f(\xi,s)}^2}{|v|_{g(\xi)}^2}-1&\geq-\frac{1}{|v|_{g(\xi)}^2}\sum_{j\in I_i}\delta_j\cdot|(h(\xi)-k(\xi))(v^{\mathsf{T}},v^{\mathsf{T}})|\\\nonumber
&\geq-\sum_{j\in I_i}\sqrt{\delta_j}\cdot|(h(\xi)-k(\xi))(v^{\mathsf{T}},v^{\mathsf{T}})|\gtrsim-\sum_{j\in I_i}\sqrt{\delta_j}.
\end{align}
This yields
\begin{equation}\nonumber
\frac{|v|_{f(\xi,s)}^2}{|v|_{g(\xi)}^2}-1\geq-\Bigl(1-\frac{1}{L^2}\Bigr)
\end{equation}
for sufficiently small $(\delta_j)_{j\in I_i}$.
Hence $|v|_{f(\xi,s)}/|v|_{g(\xi)}\geq 1/L$ for such $(\delta_j)_{j\in I_i}$.
Similarly,
\begin{align}\nonumber
\frac{|v|_{f(\xi,s)}^2}{|v|_{g(\xi)}^2}-1&\leq\frac{1}{|v|_{g(\xi)}^2}\sum_{j\in I_i}\delta_j\cdot|(h(\xi)-k(\xi))(v^{\mathsf{T}},v^{\mathsf{T}})|\\\nonumber
&\lesssim\sum_{j\in I_i}\sqrt{\delta_j}.
\end{align}
Choosing $(\delta_j)_{j\in I_i}$ small enough, we obtain
\begin{equation}\nonumber
\frac{|v|_{f(\xi,s)}^2}{|v|_{g(\xi)}^2}-1\leq L^2-1
\end{equation}
respectively $|v|_{f(\xi,s)}/|v|_{g(\xi)}\leq L$.
\end{proof}
The results from Proposition~\ref{proposition23} and~\ref{proposition26} are combined in the following theorem, where we use two auxiliary functions $S_1,S_2\colon[0,1]\to[0,1]$ defined by
\begin{equation}\nonumber
S_1(t)=\left\{\begin{array}{ll} 1, & 0\leq t\leq\tfrac{1}{2}\\
         2(1-t), & \tfrac{1}{2}\leq t\leq 1\end{array}\right.\quad\text{and}\quad
S_2(t)=\left\{\begin{array}{ll} 1-2t, & 0\leq t\leq\tfrac{1}{2}\\
         0, & \tfrac{1}{2}\leq t\leq 1\end{array}\right..
\end{equation}
\begin{theorem}\label{theorem27}
Let $K$ be a compact Hausdorff space and let
\begin{equation}\nonumber
g\colon K\to\mathscr{R}_{>\sigma}(M)
\end{equation}
be continuous.
Let $k\colon K\to C^{\infty}(\dM;T^\ast\dM\otimes T^\ast\dM)$ be a continuous family of symmetric $(0,2)$-tensor fields satisfying $\tfrac{1}{n-1}\tr_{g_0}(k(\xi))\leq H_{g(\xi)}$ for all $\xi\in K$.

Then there exists a smooth positive function $C_0\in C^{\infty}(\dM)$ such that for each $C\in C^{\infty}(\dM)$ with $C\geq C_0$ and for each neighbourhood $\mathscr{U}$ of $\dM$, there is a continuous map
\begin{equation}\nonumber
f\colon K\times[0,1]\to\mathscr{R}_{>\sigma}(M)
\end{equation}
so that the following holds for all $\xi\in K$ and $s\in[0,1]$:
\begin{enumerate}
\item[(a)]{$f(\xi,0)=g(\xi)$;}
\item[(b)]{$f(\xi,1)$ is $C$-normal;}
\item[(c)]{$f(\xi,s)_0=g(\xi)_0$;}
\item[(d)]{$\mathrm{II}_{f(\xi,s)}=S_1(s)\mathrm{II}_{g(\xi)}+(1-S_1(s))k(\xi)$, in particular $\mathrm{II}_{f(\xi,1)}=k(\xi)$;}
\item[(e)]{if $g(\xi)$ is $\tilde{C}$-normal, then $f(\xi,s)$ is $C_s$-normal for $C_s=S_2(s)\tilde{C}+(1-S_2(s))C$;}
\item[(f)]{$\ddot{f}(\xi,s)_0=S_2(s)\ddot{g}(\xi)_0-2(1-S_2(s))Cg(\xi)_0$;}
\item[(g)]{for $\ell\geq 3$ we have $f(\xi,s)_0^{(\ell)}=S_2(s)\cdot g(\xi)_0^{(\ell)}$;}
\item[(h)]{$f(\xi,s)=g(\xi)$ on $M\setminus\mathscr{U}$;}
\item[(i)]{$f(\xi,s)$ is quasi-isometric to $g(\xi)$ via the identity;}
\item[(j)]{if $g(\xi)$ is complete, then so is $f(\xi,s)$.}
\end{enumerate}
\end{theorem}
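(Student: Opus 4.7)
The plan is to obtain $f$ by concatenating the two deformations produced by Proposition~\ref{proposition23} and Proposition~\ref{proposition26}, splitting $[0,1]$ at $s=1/2$. The auxiliary functions $S_1$ and $S_2$ are designed precisely to record this two-stage interpolation as a single parameter: on $[0,1/2]$ one has $S_1\equiv 1$ and $S_2(s)=1-2s$, so only the normal form of the metric changes; on $[1/2,1]$ one has $S_2\equiv 0$ and $S_1(s)=2(1-s)$, so only the second fundamental form changes.

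I would begin by setting $C_0$ to be the pointwise maximum of the $C_0$ furnished by Proposition~\ref{proposition23} applied to the family $g$ and the $C_0=C_0(g_0,h,k)$ furnished by Proposition~\ref{proposition26} with $h(\xi):=\mathrm{II}_{g(\xi)}$ and the given $k$. The mean-curvature hypothesis $\tfrac{1}{n-1}\tr_{g_0}(k(\xi))\leq H_{g(\xi)}=\tfrac{1}{n-1}\tr_{g_0}(\mathrm{II}_{g(\xi)})$ is precisely the condition $\tr_{g_0}(h)\geq\tr_{g_0}(k)$ that Proposition~\ref{proposition26} demands. Given $C\geq C_0$ and a neighbourhood $\mathscr{U}$ of $\dM$, I would first apply Proposition~\ref{proposition23} to $g$ with data $(C,\mathscr{U})$ to obtain a continuous map $f^{(1)}\colon K\times[0,1]\to\mathscr{R}_{>\sigma}(M)$ from $g$ to a $C$-normal family with $\mathrm{II}$ unchanged. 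Then, taking $\xi\mapsto f^{(1)}(\xi,1)$ as input of Proposition~\ref{proposition26} together with $h=\mathrm{II}_g$ and $k$, I would obtain $f^{(2)}\colon K\times[0,1]\to\mathscr{R}_{>\sigma}(M)$ that deforms $\mathrm{II}$ linearly from $\mathrm{II}_g$ to $k$ while preserving $C$-normality. Define
\[
f(\xi,s):=\begin{cases} f^{(1)}(\xi,2s) & s\in[0,1/2],\\ f^{(2)}(\xi,2s-1) & s\in[1/2,1].\end{cases}
\]
Continuity at $s=1/2$ is automatic since the two pieces agree at $f^{(1)}(\xi,1)=f^{(2)}(\xi,0)$.

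Properties (a), (b), (c), (h), (i), (j) then follow directly from the corresponding properties of the two propositions, because support outside $\mathscr{U}$, quasi-isometry type and completeness are each preserved by both stages. The interpolation formulas (d)--(g) reduce to substitution: on $[0,1/2]$ the reparametrisation $s\mapsto 2s$ sends Proposition~\ref{proposition23}(c)--(f) to the stated formulas with $S_2(s)=1-2s$ and $1-S_2(s)=2s$, while $\mathrm{II}$ is preserved so $S_1(s)=1$; on $[1/2,1]$ the reparametrisation $s\mapsto 2s-1$ sends Proposition~\ref{proposition26}(d) to $\mathrm{II}_{f(\xi,s)}=2(1-s)\mathrm{II}_{g(\xi)}+(2s-1)k(\xi)=S_1(s)\mathrm{II}_{g(\xi)}+(1-S_1(s))k(\xi)$, and the identities (f), (g) follow from $S_2(s)=0$ together with the fact that any $C$-normal metric satisfies $\ddot{g}_0=-2Cg_0$ and $g_0^{(\ell)}=0$ for $\ell\geq 3$ by definition.

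The main obstacle I anticipate is a subtle dependency issue: Proposition~\ref{proposition26} produces $C_0$ in terms of the input family of $C$-normal metrics, which in the second stage is $f^{(1)}(\cdot,1)$ rather than the original $g$. The resolution is that $C_0$ in Proposition~\ref{proposition26} depends only on the boundary data $(g_0,h,k)$, and these are left unchanged by Proposition~\ref{proposition23} (which preserves $g_0$ and $\mathrm{II}_g$); hence the single choice of $C_0$ made at the outset genuinely works for both stages, and no circular dependence arises.
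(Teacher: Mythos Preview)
Your proposal is correct and follows exactly the approach of the paper: apply Proposition~\ref{proposition23} and Proposition~\ref{proposition26} in succession for a sufficiently large $C$, then concatenate the resulting homotopies at $s=1/2$. In fact, your write-up supplies more detail than the paper's own proof (which defers to \cite[Theorem~27]{BH2023}), including the explicit verification that the reparametrisations match $S_1,S_2$ and the observation that $C_0$ from Proposition~\ref{proposition26} depends only on the boundary data $(g_0,h,k)$, which are preserved by Proposition~\ref{proposition23}.
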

\begin{proof}
The proof is the same as in \cite[Thm.~3.7]{BH2023}.
We apply Proposition~\ref{proposition23} and Proposition~\ref{proposition26} for sufficiently large $C$, one after the other, and concatenate the arising homotopies.
\end{proof}
\begin{remark}
Using Theorem~\ref{theorem27}, the results \cite[Thm.~4.1~\&~4.11]{BH2023} of Bär-Hanke carry over to manifolds with non-compact boundary.
\end{remark}

\section{Applications}
The last section is about non-existence results for metrics with (uniformly) positive scalar curvature and mean convex boundary.
\subsection{Non-existence results for half-spaces}
We start with an elementary case to demonstrate the proof scheme.
\begin{corollary}\label{R2}
The right half-plane $\mathbb{R}^2_{x_1\geq 0}$ cannot carry any complete Riemannian metric of uniformly positive scalar curvature which has mean convex boundary.
\end{corollary}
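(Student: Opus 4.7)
The plan is to reduce the problem to the classical fact that $\mathbb{R}^2$ admits no complete Riemannian metric of uniformly positive scalar curvature, by means of the deformation--doubling machinery developed in this paper. So suppose for contradiction that $g$ is such a metric on $M=\mathbb{R}^2_{x_1\geq 0}$, with $\scal_g\geq\sigma_0>0$ for some constant and with $H_g\geq 0$.

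First I would apply Theorem~\ref{theorem27} with $K$ a one-point parameter space, the constant lower bound $\sigma\equiv\sigma_0/2$, and prescribed second fundamental form $k=0$. This is permitted since $\tfrac{1}{n-1}\tr_{g_0}(0)=0\leq H_g$. The output is a metric $f_1$ with $\scal_{f_1}>\sigma_0/2$, $\mathrm{II}_{f_1}=0$, $f_1$ complete (by property~(j)), and $f_1$ $C$-normal on a collar of $\dM$ for some $C\in C^\infty(\dM;\mathbb{R})$. Combining the $C$-normal form with $\mathrm{II}_{f_1}=0$ gives, on the collar, $f_1=\mathrm{d}t^2+(1-Ct^2)g_0$, which is even in $t$. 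Consequently $(f_1)_0^{(\ell)}=0$ for every odd $\ell$, so $f_1$ is a doubling metric in the sense of Section~2.

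Next I would invoke Theorem~\ref{completemetrics}: the doubled metric $G:=f_1\cup f_1$ is a complete smooth Riemannian metric on $\mathsf{D}M$, and $\mathsf{D}M$ is diffeomorphic to $\mathbb{R}^2$ since $M$ is a closed half-plane. On each copy of $M$ we have $\scal_G=\scal_{f_1}>\sigma_0/2$, and continuity of the scalar curvature across the smoothly embedded seam $\dM$ yields $\scal_G\geq\sigma_0/2$ throughout $\mathbb{R}^2$. In dimension two $\scal_G=2K_G$, so the Gauss curvature of $G$ is uniformly bounded below by $\sigma_0/4>0$, whence Bonnet--Myers forces $(\mathbb{R}^2,G)$ to have finite diameter. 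Together with completeness this gives compactness, contradicting $\mathsf{D}M\cong\mathbb{R}^2$.

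The step I anticipate requires the most care is verifying that the terminal metric $f_1$ really is a doubling metric: the combination of $C$-normality with $\mathrm{II}_{f_1}=0$ is crucial here, and one has to confirm that the reduced cylindrical form $\mathrm{d}t^2+(1-Ct^2)g_0$ holds on the whole collar rather than merely to first order along $\dM$ --- this in turn is built into the conclusion of Proposition~\ref{proposition23} and the end of Theorem~\ref{theorem27}. Everything else is a direct concatenation of the machinery already set up, together with the two-dimensional Bonnet--Myers theorem.
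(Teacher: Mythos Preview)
Your proof is correct and follows essentially the same route as the paper: deform via Theorem~\ref{theorem27} with $k=0$ to a complete metric that is $C$-normal with vanishing second fundamental form (hence doubling), double it using Theorem~\ref{completemetrics}, and contradict Bonnet--Myers on $\mathbb{R}^2$. You have spelled out in more detail than the paper does why the terminal metric is doubling---namely that $C$-normality together with $\mathrm{II}_{f_1}=0$ forces the collar form $\mathrm{d}t^2+(1-Ct^2)g_0$, which is even in $t$---and this is exactly the right justification.
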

\begin{proof}
Set $M:=\mathbb{R}^2_{x_1\geq 0}$.
Assume there is a Riemannian metric $g$ as in the theorem.
We apply Theorem~\ref{theorem27} to obtain a complete doubling metric $f\in\mathscr{R}(M)$ of uniformly positive scalar curvature.

Set $F:=f\cup f\in\mathscr{R}(\mathsf{D}M^f)$.
This metric is complete by Theorem~\ref{completemetrics} and it has uniformly positive scalar curvature.
Since $\mathsf{D}M^f$ is diffeomorphic to $\mathbb{R}^2$, we obtain a complete metric of uniformly positive scalar curvature on $\mathbb{R}^2$.
However, such a metric does not exist by the Bonnet-Myers theorem.
\end{proof}
In fact, there do exist metrics of positive scalar curvature and mean convex boundary on $\mathbb{R}_{x_1\geq 0}^2$.
An example of this is the following Riemannian hypersurface with boundary, applied in the case $n=2$:
\begin{equation}\nonumber
P_n:=\left\{x_{n+1}=\frac{1}{1-x_1^2-\ldots-x_n^2},x_1^2+\ldots+x_n^2<1,x_1\geq 0\right\}\subset(\mathbb{R}^{n+1},g_{\mathrm{eucl}}).
\end{equation}
Figure \ref{lastfigure} shows $P_2$ in $\mathbb{R}^3$.
For $n\geq 3$, the level sets of $P_n$ are half spheres of dimension two or greater, indicating that $P_n$ even has uniformly positive scalar curvature.
This is why Corollary~\ref{R2} cannot hold in higher dimensions.
However, if we fix a quasi-isometry type, we can find an analogous result for $n\geq 3$.
It requires a preparatory lemma.

\begin{figure}[h]
\begin{tikzpicture}
\node{\includegraphics[width=0.8\textwidth]{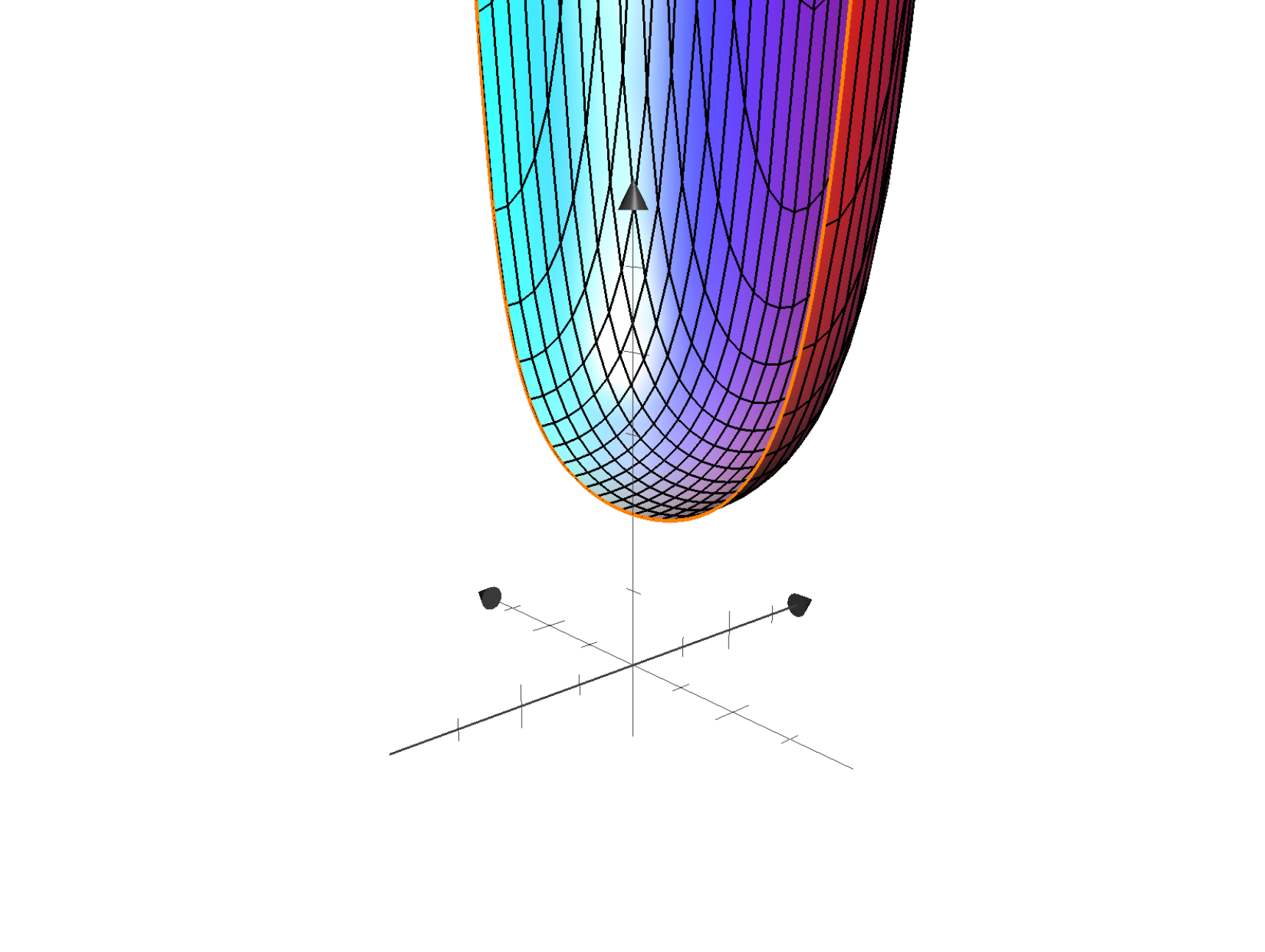}};
\node at (-0.105,-2.12) {$0$};
\node at (1,-2) {$0{,}5$};
\node at (0.45,-2.6) {$-0{,}5$};
\node at (-1,-2.8) {$-0{,}5$};
\node at (-0.105,-0.55) {$1$};
\node at (1.9,-1.6) {$x_1$};
\node at (-1.5,-1.6) {$x_2$};
\node at (0,3.2) {$x_3$};
\node at (-1.6,2) {$P_2$};
\end{tikzpicture}
\vspace{-1.8cm}
\caption{$P_2$ in $\mathbb{R}^3$}
\label{lastfigure}
\end{figure}

\begin{lemma}\label{quasiisometrydouble}
Let $M$ be a smooth manifold with non-empty boundary.
Let $g_1$ and $g_2$ be two complete doubling metrics on $M$.
Put $G_1:=g_1\cup g_1$ and $G_2:=g_2\cup g_2$.
If $\id\colon(M,d_{g_1})\to(M,d_{g_2})$ is a quasi-isometry, then so is $\id\colon(\mathsf{D}M,d_{G_1})\to(\mathsf{D}M,d_{G_2})$.
\end{lemma}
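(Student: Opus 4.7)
The plan is to reduce the question to a distance comparison inside $M$ by combining Proposition~\ref{samedistance} with the reflection symmetry of the doubled metrics. Write $A\geq 1$ and $B\geq 0$ for quasi-isometry constants of $\id\colon(M,d_{g_1})\to(M,d_{g_2})$, and let $\tau\colon\mathsf{D}M\to\mathsf{D}M$ be the canonical involution swapping the two copies of $M$; since each $G_i=g_i\cup g_i$ is symmetric across $\dM$, the map $\tau$ is an isometry of both $G_1$ and $G_2$ and fixes $\dM$ pointwise. I would split the argument into two cases depending on whether two given points $p,q\in\mathsf{D}M$ lie in a common copy of $M$ or in opposite copies.

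If $p$ and $q$ belong to the same copy, then Proposition~\ref{samedistance} directly gives $d_{G_i}(p,q)=d_{g_i}(p,q)$ for $i=1,2$, so the hypothesised inequalities transfer verbatim with the original constants $(A,B)$.

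The substantive case is when $p$ lies in the first copy and $q$ in the second. Any absolutely continuous (in particular, continuous) path from $p$ to $q$ must cross $\dM$, and combined with the triangle inequality this yields the reflection formula
\begin{equation}\nonumber
d_{G_i}(p,q)=\inf_{z\in\dM}\bigl(d_{G_i}(p,z)+d_{G_i}(z,q)\bigr).
\end{equation}
Writing $\bar q:=\tau(q)$ for the mirror of $q$ in the first copy, the fact that $\tau$ is an isometry of $G_i$ fixing $\dM$ gives $d_{G_i}(z,q)=d_{G_i}(z,\bar q)$ for every $z\in\dM$, and two applications of Proposition~\ref{samedistance} then convert each $G_i$-distance on the right-hand side into the corresponding $g_i$-distance on $M$:
\begin{equation}\nonumber
d_{G_i}(p,q)=\inf_{z\in\dM}\bigl(d_{g_i}(p,z)+d_{g_i}(z,\bar q)\bigr).
\end{equation}
At this point, inserting the hypothesised quasi-isometry inequalities into each summand and passing to the infimum produces
\begin{equation}\nonumber
\tfrac{1}{A}\,d_{G_2}(p,q)-2B\leq d_{G_1}(p,q)\leq A\,d_{G_2}(p,q)+2B,
\end{equation}
so $(A,2B)$ serves as quasi-isometry constants between $(\mathsf{D}M,d_{G_1})$ and $(\mathsf{D}M,d_{G_2})$.

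The only step that is not purely formal is the reflection formula: its $\leq$ direction is the triangle inequality, and its $\geq$ direction follows because any competing path has a first boundary crossing $z_{0}\in\dM$ and therefore length at least $d_{G_i}(p,z_{0})+d_{G_i}(z_{0},q)$, which is in turn bounded below by the infimum. Given Proposition~\ref{samedistance} and the $\tau$-symmetry of $G_{i}$, no new analysis beyond what has been developed in this section is required.
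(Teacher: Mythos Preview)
Your proof is correct and follows essentially the same strategy as the paper's: split the cross-copy distance at a boundary point, reduce to $d_{g_i}$ via Proposition~\ref{samedistance}, and apply the hypothesis on each piece to obtain constants $(A,2B)$. The only difference is in how the boundary point is produced. The paper invokes Theorem~\ref{completemetrics} (so that $G_2$ is complete) to pick a minimizing $G_2$-geodesic from $p$ to $q$ and uses an actual crossing point $\gamma(s)\in\dM$ of that geodesic, which makes the splitting exact on the $G_2$ side; your infimum formula over all $z\in\dM$ bypasses this, since the lower bound needs only that every competing curve has \emph{some} boundary crossing. Both arguments are equally short; yours has the small bonus of showing that the completeness hypothesis is not actually needed for this lemma.
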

\begin{proof}
We assume that $M$ is connected and that $\id\colon(M,d_{g_1})\to(M,d_{g_2})$ is a quasi-isometry.
Then there exist constants $A\geq 1$ and $B\geq 0$ such that
\begin{equation}\nonumber
\frac{1}{A}d_{g_1}(p,q)-B\leq d_{g_2}(p,q)\leq A\,d_{g_1}(p,q)+B\quad\text{for all $p,q\in M$.}
\end{equation}
Let $p,q\in\mathsf{D}M$.
If $p$ and $q$ are contained in the same copy of $M$, then Proposition~\ref{samedistance} yields the desired quasi-isometry condition.

We focus on the case where $p$ and $q$ are contained in different copies.
Since $G_2$ is complete (see Theorem~\ref{completemetrics}), there exists a length-minimizing $G_2$-geodesic $\gamma\colon[0,1]\to\mathsf{D}M^{g_2}$ with $\gamma(0)=p$ and $\gamma(1)=q$.
By continuity there is $s\in[0,1]$ with $\gamma(s)\in\dM$.
The restricted curves $\gamma|_{[0,s]}$ and $\gamma|_{[s,1]}$ are again length-minimizing $G_2$-geodesics.
It holds
\begin{align}\nonumber
d_{G_2}(p,q)=\ell_{G_2}(\gamma)&=\ell_{G_2}(\gamma|_{[0,s]})+\ell_{G_2}(\gamma|_{[s,1]})\\\nonumber
&=d_{G_2}(p,\gamma(s))+d_{G_2}(\gamma(s),q)\\\nonumber
&=d_{g_2}(p,\gamma(s))+d_{g_2}(\gamma(s),q)\\\nonumber
&\geq(\tfrac{1}{A}d_{g_1}(p,\gamma(s))-B)+(\tfrac{1}{A}d_{g_1}(\gamma(s),q)-B)\\\nonumber
&=(\tfrac{1}{A}d_{G_1}(p,\gamma(s))-B)+(\tfrac{1}{A}d_{G_1}(\gamma(s),q)-B)\\\nonumber
&\geq\tfrac{1}{A}d_{G_1}(p,q)-2B.
\end{align}
The other inequality $d_{G_2}(p,q)\leq A\,d_{G_1}(p,q)+2B$ can be shown in a similar manner.
\end{proof}
\begin{corollary}\label{fromHKRS08}
Let $n\geq 3$.
The right half-space $\mathbb{R}^n_{x_1\geq 0}$ cannot carry any complete Riemannian metric of uniformly positive scalar curvature which has mean convex boundary and which is quasi-isometric to the Euclidean metric via the identity.
\end{corollary}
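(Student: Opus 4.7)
The plan is to mimic the argument of Corollary~\ref{R2}, but using a quasi-isometry invariant non-existence result for uniformly positive scalar curvature on $\mathbb{R}^n$ in place of the Bonnet--Myers theorem. Set $M:=\mathbb{R}^n_{x_1\geq 0}$ and suppose, for contradiction, that $g$ is a complete Riemannian metric on $M$ with $\scal_g\geq\kappa>0$, mean convex boundary $H_g\geq 0$, and such that $\id\colon(M,g)\to(M,g_{\mathrm{eucl}})$ is a quasi-isometry.

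First, I would apply Theorem~\ref{theorem27} with $K=\{*\}$, the constant lower bound $\sigma\equiv\kappa/2>0$, and $k\equiv 0$ (the hypothesis $\tfrac{1}{n-1}\tr_{g_0}(k)\leq H_g$ is satisfied because $H_g\geq 0$). This produces a Riemannian metric $f:=f(*,1)$ on $M$ which is $C$-normal for some $C\in C^{\infty}(\dM;\mathbb{R})$ and has $\II_{f}=0$. For a $C$-normal metric with vanishing second fundamental form we have $f_t=f_0-Ct^2f_0$ near the boundary, so all odd $t$-derivatives of $f_t$ at $t=0$ vanish; hence $f$ is a \emph{doubling} metric in the sense of the paper. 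Moreover, by properties (a), (i), (j) of Theorem~\ref{theorem27}, $f$ is complete, has $\scal_f>\kappa/2>0$ uniformly, and is quasi-isometric via the identity to $g$, hence to the Euclidean metric on $M$.

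Next, form the double $F:=f\cup f$ on $\mathsf{D}M^{f}$. By Theorem~\ref{completemetrics}, $F$ is complete, and its scalar curvature equals $\scal_f$ on each copy, so $\scal_F>\kappa/2>0$ uniformly. The manifold $\mathsf{D}M^{f}$ is diffeomorphic to $\mathbb{R}^n$ via the canonical map identifying the two copies of $\mathbb{R}^n_{x_1\geq 0}$ along their boundary. To carry the quasi-isometry through the doubling, I would invoke Lemma~\ref{quasiisometrydouble}: the Euclidean metric $g_{\mathrm{eucl}}$ on $\mathbb{R}^n_{x_1\geq 0}$ is itself a complete doubling metric (its $t$-derivatives of all orders vanish along the flat boundary), and $g_{\mathrm{eucl}}\cup g_{\mathrm{eucl}}$ is just the Euclidean metric on $\mathbb{R}^n$. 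Since $f$ and $g_{\mathrm{eucl}}$ are quasi-isometric complete doubling metrics on $M$, Lemma~\ref{quasiisometrydouble} yields that $F$ and the Euclidean metric on $\mathbb{R}^n$ are quasi-isometric via the identity.

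The conclusion is then that $\mathbb{R}^n$ admits a complete Riemannian metric of uniformly positive scalar curvature quasi-isometric to the Euclidean metric via the identity, contradicting the non-existence result of Hanke--Kotschick--Roe--Schick~\cite{HKRS} for $n\geq 3$. The main conceptual obstacle is to ensure that the doubling step preserves the quasi-isometry type with the Euclidean background rather than merely with $g$; this is exactly what Lemma~\ref{quasiisometrydouble} is designed for, provided one checks that $g_{\mathrm{eucl}}$ is itself doubling on the half-space. Everything else is a direct application of the deformation principle (Theorem~\ref{theorem27}) together with the completeness statement for doubling metrics (Theorem~\ref{completemetrics}).
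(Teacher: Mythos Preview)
Your overall strategy mirrors the paper's, but there is a genuine gap concerning smooth structures on the double. You assert that $\mathsf{D}M^{f}$ is diffeomorphic to $\mathbb{R}^n$ via the canonical map $\kappa$, and that $g_{\mathrm{eucl}}\cup g_{\mathrm{eucl}}$ becomes the Euclidean metric under this identification. But $\kappa$ is a diffeomorphism only for the smooth structure $\mathsf{D}M^{g_{\mathrm{eucl}}}$, i.e.\ the one induced by the \emph{Euclidean} geodesic collar. Your deformed metric $f$ has its own geodesic collar $\phi_f$, and the smooth structure $\mathsf{D}M^{f}$ it induces is in general different (the paper stresses this point right after introducing $\mathsf{D}M^g$). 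Hence $\kappa^\ast F$ need not be smooth across $\{x_1=0\}$, so you cannot speak of its scalar curvature. If instead you choose some abstract diffeomorphism $\Psi\colon\mathbb{R}^n\to\mathsf{D}M^{f}$, then the quasi-isometry supplied by Lemma~\ref{quasiisometrydouble} is via $\id_{\mathsf{D}M}$ and transports to $\Psi^{-1}\circ\kappa$ on $\mathbb{R}^n$, which is not the identity; the hypothesis needed for \cite[Cor.~1.8]{HKRS} is then no longer available.

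The paper closes exactly this gap by inserting Corollary~\ref{uniformization} with the distinguished background $\beta=g_{\mathrm{eucl}}$ \emph{before} running Theorem~\ref{theorem27}. This forces $\phi_f=\phi_{g_{\mathrm{eucl}}}$ on a neighbourhood of $\dM$, so that $\mathsf{D}M^{f}$ and $\mathsf{D}M^{g_{\mathrm{eucl}}}$ are literally the same smooth manifold; both $F=f\cup f$ and $B=g_{\mathrm{eucl}}\cup g_{\mathrm{eucl}}$ are then smooth on it, $\kappa$ is a genuine diffeomorphism, and the commuting square of isometries and quasi-isometries in the paper's proof goes through. The author even flags this in advance: ``Uniformisation is even necessary for some applications, see Corollary~\ref{fromHKRS08}.'' Apart from this missing uniformisation step, everything else in your argument---deforming to a $C$-normal doubling metric with $k\equiv 0$, completeness of the double via Theorem~\ref{completemetrics}, and the quasi-isometry of doubles via Lemma~\ref{quasiisometrydouble}---matches the paper.
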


\begin{proof}
Set $M:=\mathbb{R}^n_{x_1\geq 0}$.
Assume there is a Riemannian metric $g$ as in the theorem.
We apply Corollary~\ref{uniformization} and Theorem~\ref{theorem27} for the metric $g$ and the distinguished metric $\beta:=g_{\mathrm{eucl}}$.
This yields a Riemannian metric $f\in\mathscr{R}(M)$ with the following properties:
\begin{enumerate}
\item[\myicon]{$f$ has uniformly positive scalar curvature;}
\item[\myicon]{$f$ is doubling;}
\item[\myicon]{$\varrho_f=\varrho_{\beta}$ on a neighbourhood of $\partial M\subset[0,\infty)\times\partial M$;}
\item[\myicon]{$f$ is quasi-isometric to $g_{\mathrm{eucl}}$ via the identity;}
\item[\myicon]{$f$ is complete.}
\end{enumerate}
From the third property we deduce that $f$ and $\beta$ induce the same smooth structure on the double $\mathsf{D}M$.
Both $B=\beta\cup\beta$ and $F=f\cup f$ are smooth metrics on $\mathsf{D}M$.
The metric $F$ clearly has uniformly positive scalar curvature.
Furthermore, $F$ is complete by Theorem~\ref{completemetrics} and the identity $\id\colon(\mathsf{D}M,d_B)\to(\mathsf{D}M,d_F)$ is a quasi-isometry by Lemma~\ref{quasiisometrydouble}.

Now we apply the canonical diffeomorphism
\begin{equation}\nonumber
\kappa\colon\mathbb{R}^n\to\mathsf{D}M\,,\,\kappa(x)=\left\{\begin{array}{ll} \left[(x_1,x_2,\dots,x_n),1\right], & x_1\geq 0 \\
         \left[(-x_1,x_2,\dots,x_n),2\right], & x_1\leq 0\end{array}\right.
\end{equation}
to obtain the following commutative diagram of Riemannian manifolds and smooth maps:
\begin{equation}\nonumber
\begin{tikzcd}
(\mathsf{D}M,B)\arrow{rr}{\id}&&(\mathsf{D}M,F)\\
\\
(\mathbb{R}^n,G_{\mathrm{eucl}})\arrow{uu}{\kappa}\arrow{rr}{\id}&&(\mathbb{R}^n,\kappa^\ast F)\arrow{uu}{\kappa}
\end{tikzcd}
\end{equation}
This is where the unique smooth structure of $\mathsf{D}M$ comes into play because it guarantees smoothness for the right vertical arrow.
Both vertical arrows are Riemannian isometries.

The diagram above gives rise to a commutative diagram of metric spaces and continuous maps:
\begin{equation}\nonumber
\begin{tikzcd}
(\mathsf{D}M,d_B)\arrow{rr}{\id}&&(\mathsf{D}M,d_F)\\
\\
(\mathbb{R}^n,d_{\mathrm{eucl}})\arrow{uu}{\kappa}\arrow{rr}{\id}&&(\mathbb{R}^n,d_{\kappa^\ast F})\arrow{uu}{\kappa}
\end{tikzcd}
\end{equation}
The vertical arrows are metric isometries.
As the upper horizontal arrow is a quasi-isometry, so is the lower one.
In total, $\kappa^\ast F$ violates the non-existence result in \cite[Cor.~1.8]{HKRS}.
\end{proof}
\subsection{Adding topology}
The quasi-isometry condition can be dropped if we add some topology to the half-space.
\begin{corollary}\label{Application3}
Let $n\geq 2$ and $N:=\mathbb{R}^n_{x_1\geq 0}\#T^n$ be a connected sum of the right half-space with an $n$-torus attached in the interior.
Then $N$ does not admit a complete Riemannian metric of positive scalar curvature which has mean convex boundary.
\end{corollary}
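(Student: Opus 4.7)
The plan is to mirror the arguments for Corollary~\ref{R2} and Corollary~\ref{fromHKRS08}: deform a hypothetical metric to a complete doubling metric of positive scalar curvature, pass to the double, and then contradict a known non-existence result on a (non-compact) manifold without boundary. Suppose for contradiction that $g$ is a complete Riemannian metric on $M := \R^n_{x_1 \geq 0} \# T^n$ with $\scal_g > 0$ and $H_g \geq 0$. I apply Theorem~\ref{theorem27} with $K$ a one-point space, $\sigma \equiv 0$, and $k \equiv 0$ on $\dM$; the required inequality $\tfrac{1}{n-1}\tr_{g_0}(k) = 0 \leq H_g$ is exactly mean convexity of the boundary. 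This yields a complete metric $f \in \mathscr{R}_{>0}(M)$ with $\II_f = 0$ that is $C$-normal near $\dM$. In a geodesic collar we have $f_t = (1 - C t^2)\, f_0$, so every odd $t$-derivative $f_0^{(\ell)}$ vanishes and $f$ is a doubling metric.

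Next I form $F := f \cup f$ on the double $\mathsf{D}M^f$. By Theorem~\ref{completemetrics} the metric $F$ is complete, and since scalar curvature is a local invariant and agrees with $\scal_f > 0$ on each copy of $M$, we have $\scal_F > 0$ everywhere. Topologically, $\mathsf{D}(\R^n_{x_1 \geq 0}) \cong \R^n$, and because the torus summand of $M$ lies strictly in the interior of $M$, each of the two copies of $M$ contributes such a summand to the double. Hence $\mathsf{D}M^f \cong \R^n \# T^n \# T^n$, which is spin as a connected sum of spin manifolds.

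Finally I invoke Chang--Weinberger--Yu \cite{CWY}: since $T^n$ is a closed enlargeable spin manifold, any connected sum $X \# T^n$ with an $n$-dimensional manifold $X$ admits no complete Riemannian metric of positive scalar curvature. Taking $X := \R^n \# T^n$ contradicts the existence of $F$. The main obstacle is the topological identification of $\mathsf{D}M^f$, for which it is essential that the torus summand of $M$ be placed strictly in the interior rather than across $\dM$; the analytic content is entirely absorbed into Theorem~\ref{theorem27} (deformation to a doubling psc metric) and Theorem~\ref{completemetrics} (transfer of completeness to the double).
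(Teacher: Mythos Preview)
Your argument follows essentially the same route as the paper: deform to a complete doubling metric with positive scalar curvature via Theorem~\ref{theorem27}, pass to the double using Theorem~\ref{completemetrics}, identify the double topologically, and then invoke a known non-existence result on the resulting boundaryless manifold. Your identification $\mathsf{D}M^f \cong \R^n \# T^n \# T^n$ is equivalent to the paper's $(T^n \# T^n)\setminus\{\mathrm{pt}\}$, since taking connected sum with $\R^n$ amounts to deleting a point.

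There is one genuine issue: the reference you cite for the final step is wrong. Chang--Weinberger--Yu \cite{CWY} concerns contractible \emph{3-manifolds} with \emph{uniformly} positive scalar curvature; it does not contain the statement that $X \# T^n$ admits no complete psc metric for general $n$ and general $X$. The paper instead appeals to \cite[Theorem~C]{Cecchini}, using that $T^n \# T^n$ is enlargeable, to rule out complete psc metrics on $(T^n \# T^n)\setminus\{\mathrm{pt}\}$. Replace your invocation of \cite{CWY} by this, and the proof is complete and matches the paper's.
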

\begin{proof}
The strategy is the same as before.
Since the double $\mathsf{D}M$ is diffeomorphic to a connected sum $(T^n\# T^n)\setminus\nobreak\{\mathrm{pt}\}$ of tori with a single point removed, and since $T^n\# T^n$ is enlargeable, we can apply \cite[Thm.~C]{Cecchini} by Cecchini.
\end{proof}
At least in low dimensions, this result is valid in greater generality, meaning that the right half-space can be replaced by any other manifold with boundary.
We refer to this as the generalised Geroch conjecture with boundary.
\begin{corollary}\label{Geroch}
Let $n\geq 2$ such that the generalised Geroch conjecture (without boundary) holds true for all manifolds of dimension $n$.
Then for any $n$-manifold $M$ with non-empty boundary, the connected sum $M\# T^n$ with a torus attached in the interior does not admit a complete Riemannian metric of positive scalar curvature which has mean convex boundary.
\end{corollary}
\begin{proof}
It holds $\mathsf{D}(M\# T^n)\approx(\mathsf{D}M\#T^n)\#T^n$.
Applying the generalised Geroch conjecture for $\mathsf{D}M\#T^n$ leads to a contradiction in the usual proof strategy.
\end{proof}
Recent progress on the generalised Geroch conjecture by Lesourd-Unger-Yau \cite[Thm.~1.2]{LUY} and Chodosh-Li \cite[Thm.~3]{CL} shows that the implication in Corollary~\ref{Geroch} is true for all $n\leq 7$.
\begin{remark}
There are non-existence results similar to Corollary~\ref{Application3} and \ref{Geroch} if the torus is cut along one of its basis circles and glued along the boundary of $\mathbb{R}^n_{x_1\geq 0}$ or $M$, respectively.
\end{remark}
Together with a recent result of Cruz-Silva Santos \cite[Thm.~1.1]{CSS} and three splitting theorems for manifolds with boundary by Kasue \cite[Thm.~B~\&~C]{Kasue} and Sakurai \cite[Thm.~1.8]{Sakurai}, we can draw the following conclusion:
\begin{corollary}\label{Gerochadd}
Let $n\geq 3$ such that the generalised Geroch conjecture (without boundary) holds true for all manifolds of dimension $n$.
\begin{enumerate}
\item[(a)]{Let $M$ be an $n$-manifold with non-empty boundary and let $M\#T^n$ be the connected sum with a torus attached in the interior.
Then every complete Riemannian metric on $M\#T^n$ with non-negative scalar curvature and mean convex boundary is Ricci-flat.}
\item[(b)]{Let $M$ be a connected $n$-manifold with non-empty boundary and one of the following topological properties:
\begin{enumerate}\nonumber
\item[(i)]{$M$ is non-compact and $\partial M$ is compact or}
\item[(ii)]{$\partial M$ is disconnected and it has a compact connected component $\Gamma$.}
\end{enumerate}
Then the connected sum $M\# T^n$ with a torus attached in the interior does not admit a complete Riemannian metric of non-negative scalar curvature which has mean convex boundary.}
\item[(c)]{For any connected $n$-manifold $M$ with non-empty and non-compact boundary, the connected sum $M\# T^n$ with a torus attached in the interior does not admit a complete Riemannian metric $g$ of non-negative scalar curvature which has mean convex boundary and which has a unit-speed normal geodesic $\gamma\colon[0,\infty)\to M$ emanating from the boundary with
\begin{equation}\nonumber
\sup\{t\geq 0: d_g(\gamma(t),\partial M)=t\}=\infty.
\end{equation}}
\end{enumerate}
\end{corollary}
The results in $(a)$ and $(c)$ hold for all $n\geq 3$ (independently of the Geroch conjecture) when $M=\mathbb{R}^n_{x_1\geq 0}$.
\begin{proof}
\textit{Regarding} \textit{(a)}:
If there existed a non-Ricci-flat complete metric of non-negative scalar curvature with mean convex boundary, then \cite[Thm.~1.1]{CSS} by Cruz-Silva Santos would yield a complete metric of positive scalar curvature with mean convex boundary on $M\#T^n$.
In fact, such a metric cannot exist according to Corollary~\ref{Geroch}.

\textit{Regarding} \textit{(b)}: Let $M$ be a connected $n$-dimensional manifold with non-empty boundary that satisfies $(i)$ or $(ii)$.
We assume that there exists a metric $g$ with the above properties.
By $(a)$ it is Ricci-flat.
Then the pair $(M\# T^n,g)$ is either isometric to the direct product $[0,\infty)\times\partial M$ (property $(i)$) or $[0,a]\times\Gamma$ (property $(ii)$).
These splitting theorems are due to Kasue \cite[Thm.~B~\&~C]{Kasue}.
The one for property $(i)$ has also been proved by Croke-Kleiner in \cite[Thm.~2]{CK}.

We focus on property $(i)$, the other case is similar:
Given an isometry $\Phi\colon(M\#T^n,g)\to([0,\infty)\times\partial M,\mathrm{d}t^2+g_0)$, one can arrange that $\Phi|_{\partial M}=\id_{\partial M}$ by eventually composing the original map with $\id_{[0,\infty)}\times\Phi|_{\partial M}^{-1}$.
Then the inclusion $\partial M\hookrightarrow M\#T^n$ is a homotopy equivalence.
In particular, the induced map in homology $H_1(\partial M)\to H_1(M\#T^n)$ is an isomorphism.

Writing $M\# T^n$ as an attachment space $(M\setminus\mathring{D}^n)\cup_{S^{n-1}}(T^n\setminus\mathring{D}^n)$, we know that the canonical map $H_1(M\setminus\mathring{D}^n)\oplus H_1(T^n\setminus\mathring{D}^n)\to H_1(M\#T^n)$ in the Mayer-Vietoris sequence is injective.
Thus, by functoriality, the map
\begin{equation}\nonumber
H_1(\partial M)\to H_1(M\setminus\mathring{D}^n)\oplus H_1(T^n\setminus\mathring{D}^n)\;\;,\;\;x\mapsto(H_1(\iota)(x),0)
\end{equation}
is surjective, where $\iota\colon\partial M\hookrightarrow M\setminus\mathring{D}^n$ denotes the inclusion.
This is absurd, since $H_1(T^n\setminus\mathring{D}^n)\cong H_1(T^n)$ has non-trivial elements.

\textit{Regarding} \textit{(c)}:
The proof is anologous to part $(b)$.
Here we need to use a splitting theorem of Sakurai \cite[Thm.~1.8]{Sakurai} for non-compact boundaries.
\end{proof}
\subsection{The Whitehead manifold}
One can also drop the quasi-isometry condition by considering certain submanifolds with boundary of the Whitehead manifold instead of half-spaces.
The Whitehead manifold $W$ is an open 3-manifold which is contractible but not homeomorphic to $\mathbb{R}^3$.
Chang-Weinberger-Yu \cite[Thm.~1]{CWY} have shown that $W$ does not admit any complete metric of uniformly positive scalar curvature.
Subsequent work of Wang \cite[Thm.~1.1]{Wang} even excludes the existence of complete metrics of positive scalar curvature without uniform positive lower bound.

\subsubsection{Construction}
The Whitehead manifold is a well-established object in geometric topology.
We will look at two equivalent pictures for the Whitehead manifold:
First consider the $3$-sphere $S^3\subset\mathbb{C}^2$ as a union of two solid tori
\begin{align}\nonumber
&T_0=\{(x_0,y_0,x_1,y_1)\in S^3:x_0^2+y_0^2\leq 1/2\},\\\nonumber
&T_1=\{(x_0,y_0,x_1,y_1)\in S^3:x_1^2+y_1^2\leq 1/2\}.
\end{align}
Then embed another solid torus $T_2$ in $T_1$ such that $T_2$ forms a Whitehead link with the (thickened) meridian of $T_1$.
Put
\begin{equation}\nonumber
A:=T_1-\Int(T_2).
\end{equation}
This is a manifold with two boundary components, where the outer component is denoted by $\partial^1$ and the inner one is denoted by $\partial^2$.
For our purposes, one needs to observe the existence of a self-diffeomorphism
\begin{equation}\nonumber
\text{$\vartheta\colon A\to A$\;\;with\;\;$\vartheta(\partial^1)=\partial^2\;,\;\vartheta(\partial^2)=\partial^1$\;\;and\;\;$\vartheta|_{\partial A}^2=\id_{\partial A}$.}
\end{equation}
To this end, we draw $T_0$ as a standard solid torus in $\mathbb{R}^3$ and wrap $T_2$ around it so that $T_0$ and $T_2$ form a Whitehead link.
Then $T_1$ ist the complement of $\Int(T_0)$ in $\mathbb{R}^3$ united with a point at infinity.
Note that the meridian of $T_1$ is the equator of $T_0$, which means that the Whitehead link with components $T_0$ and $T_2$ is the one mentioned above.
The boundary of $T_0$ is $\partial^1$ and the boundary of $T_2$ is $\partial^2$.

It is a well-known fact that there exists an isotopy $(T_0\amalg T_2)\times[0,1]\to\mathbb{R}^3$ from the inclusion to an embedding $(T_0\amalg T_2)\times\{1\}\to\mathbb{R}^3$ which swaps the positions of the tori in the Whitehead link and which acts as an involution.
This isotopy can be extended to a diffeotopy $S^3\times[0,1]\to S^3$ yielding a diffeomorphism
\begin{equation}\nonumber
\text{$\Theta\colon S^3\to S^3$\;\;with\;\;$\Theta(T_0)=T_2\;,\;\Theta(T_2)=T_0$\;\;and\;\;$\Theta|_{T_0\amalg T_2}^2=\id_{T_0\amalg T_2}$}
\end{equation}
between starting and end point.
Then $\vartheta:=\Theta|_A\colon A\to A$ does the job.

Furthermore, we define
\begin{equation}\nonumber
\Psi\colon S^3\to S^3\;,\;\Psi(x_0,y_0,x_1,y_1)=(x_1,y_1,x_0,y_0).
\end{equation}
This diffeomorphism is an involution which swaps $T_0$ and $T_1$. In particular, it holds $\Psi|_{\partial^1}^2=\id_{\partial^1}$ on $\partial^1=\partial T_1=\partial T_0$.
We set $\psi:=\Psi|_{T_1}$.
After all, the composition
\begin{equation}\nonumber
\tw:=\Theta\circ\Psi|_{T_1}\colon T_1\to T_1
\end{equation}
is a twisting map which deforms $T_1$ into $\tw(T_1)=T_2$.
Then put
\begin{equation}\nonumber
T_3:=\tw(T_2)\;,\;T_4:=\tw(T_3)\;,\;\dots\;,\;T_i:=\tw(T_{i-1})
\end{equation}
for all $i\geq 3$.
The map $\Psi$ is isotopic to the identity via
\begin{align}\nonumber
S^3\times[0,1]\to S^3\;,\;(x_0,y_0,x_1,y_1,t)\mapsto\Bigl(&\sin\Bigl(\frac{\pi}{2}t\Bigr)x_0+\cos\Bigl(\frac{\pi}{2}t\Bigr)x_1\,,\,\sin\Bigl(\frac{\pi}{2}t\Bigr)y_0+\cos\Bigl(\frac{\pi}{2}t\Bigr)y_1\,,\\\nonumber
&\cos\Bigl(\frac{\pi}{2}t\Bigr)x_0-\sin\Bigl(\frac{\pi}{2}t\Bigr)x_1\,,\,\cos\Bigl(\frac{\pi}{2}t\Bigr)y_0-\sin\Bigl(\frac{\pi}{2}t\Bigr)y_1\Bigr)
\end{align}
concatenated with
\begin{equation}\nonumber
S^3\times[0,1]\to S^3\;,\;(x_0,y_0,x_1,y_1,t)\mapsto\bigl(x_0,y_0,-\cos(\pi t)x_1+\sin(\pi t)y_1,-\sin(\pi t)x_1-\cos(\pi t)y_1\bigr).
\end{equation}
Furthermore, $\Theta$ is isotopic to $\id_{S^3}$ by construction.
Therefore, the composition $\Theta\circ\Psi$ is isotopic to $\id_{S^3}$ as well.
As a consequence, each pair of tori $T_{i-1}$ and $T_i$ is Whitehead linked.

The Whitehead manifold is defined as the open submanifold $W:=S^3-\cap_{k=1}^\infty T_k\subset S^3$. It can also be seen as the union of $T_0$ with infinitely many `peeled' tori:
\begin{equation}\nonumber
W=T_0\cup\bigl(T_1-T_2\bigr)\cup\bigl(T_2-T_3\bigr)\cup\ldots
\end{equation}
This description is the motivation for the second picture:
We consider the infinite sequence of spaces
\begin{align}\nonumber
&W_0=T_1,\\\nonumber
&W_1=A\cup_{\partial^2}\tw(W_0),\\\nonumber
&W_2=A\cup_{\partial^2}\tw(W_1),\\[-4pt]\nonumber
&\hspace{4pt}\vdots\\\nonumber
&W_i=A\cup_{\partial^2}\tw(W_{i-1})
\end{align}
together with the obvious inclusion maps $\tw\colon W_{i}\hookrightarrow W_{i+1}$.
In addition, we have a sequence of embeddings $f_i\colon W_i\to W$ given by
\begin{align}\nonumber
&f_0=\psi,\\\nonumber
&f_1=\left\{\begin{array}{ll} f_0\circ\tw^{-1}& \text{on}\;\tw(W_0) \\
         \vartheta& \text{on\;$A$}\end{array}\right.\hspace{-4pt},\\\nonumber
&f_2=\left\{\begin{array}{ll} f_1\circ\tw^{-1}& \text{on}\;\tw(W_1) \\
         \tw\circ\vartheta& \text{on\;$A$}\end{array}\right.\hspace{-4pt},\\[-4pt]\nonumber
&\hspace{4pt}\vdots\hspace{60pt}\vdots\\\nonumber
&f_i=\left\{\begin{array}{ll} f_{i-1}\circ\tw^{-1}& \text{on}\;\tw(W_{i-1}) \\
         \tw^{i-1}\circ\vartheta& \text{on\;$A$}\end{array}\right.\hspace{-4pt}.
\end{align}
These maps are well-defined since $\psi=\psi^{-1}$ on $\partial^1$ and $\vartheta=\vartheta^{-1}$ on $\partial^2$.
In a nutshell, the maps $f_i$ send the innermost torus of $W_i$ to $T_0$ and the peeled tori that are stacked to the outside of $W_i$ to the peeled tori inside $W$.

As the collection $\{T_0,T_1-\Int(T_2),T_2-\Int(T_3),\ldots\}$ of closed subsets of $W$ is locally finite, a map $W\to Y$ to any topological space $Y$ is continuous iff it is continuous on every subset in the collection.
Thus, $W\to Y$ is continuous iff $W_i\to W\to Y$ is continuous for every $i$, showing that $W$ is the (topological) direct limit of the $W_i$.
This is the second picture for the Whitehead manifold.

Both pictures give geometrical and topological insights: 
While the manifold structure of $W$ is immediately visible in the first picture, its contractibility can be deduced from the second picture relying on the fact that $W_i\to W_{i+1}$ is nullhomotopic for every $i$ and that the $\pi_n$-functors commute with direct limits. 

Now we remove the innermost torus of $W$, corresponding to the torus $T_0$ in the first picture.
As has been shown by Newman-Whitehead \cite[Thm.~6]{NW}, the fundamental group $\pi_1(W-T_0)$ is not finitely generated.
This is sufficient to show that $W$ is not homeomorphic to $\mathbb{R}^3$:
If there was a homeomorphism $\Phi\colon W\to\mathbb{R}^3$, then it would hold $\pi_1(W-T_0)\cong\pi_1(\mathbb{R}^3-\Phi(T_0))$. However, $\pi_1(\mathbb{R}^3-\Phi(T_0))$ is finitely generated (Wirtinger presentation).

\subsubsection{Creating non-compact boundaries}
Next, we turn to submanifolds with boundary of $W$.
To this end, let $R$ be a closed tube around a ray $[0,\infty)$.
One can embed $R$ properly into $W$ by means of the second picture for the Whitehead manifold, cf. Figure \ref{Whiteheadpicture}:
First cut $R$ after the interval $[0,1]$ of the ray, place the tip of the tube somewhere in $W_1-\tw(W_0)$ and let the cut tube go straight to the boundary of $W_1$ so that it misses the innermost torus $\tw(W_0)$.
Afterwards, we twist $W_1$ and add a peeled torus to obtain $W_2$.
Then take the next part of the tube that covers the interval $[1,2]$ and stick it along the first part that has already been embedded.
One may need to smooth out the glue joint.
The second part of the tube should point towards the boundary of $W_2$ as well.
This procedure can be iterated.

Once the tube $R$ has been embedded properly,
\begin{equation}\nonumber
\breve{W}:=W-\Int(R)
\end{equation}
is a submanifold of $W$ with non-compact boundary $\partial\breve{W}\approx\mathbb{R}^2$:
the required charts around $\partial R$ can be constructed using the normal bundle of $R$.
The remaining charts are immediate, since the complement of $R$ is an open subset of $W$.
Here we use the fact that the embedding $R\hookrightarrow W$ is proper.
\begin{remark}
Properness is an essential ingredient.
In general, it is not enough to properly embed only the ray $[0,\infty)$ instead of the whole tube $R$ to obtain smooth submanifolds with boundary.
For example, if we consider the manifold $\mathbb{R}^3\setminus\{0\}$ with the tube $R$ sitting directly above the $x_1$-$x_2$-plane, then $(\mathbb{R}^3\setminus\{0\})-\Int(R)$ has an edge.
Here the ray $[0,\infty)$ is properly embedded, but $R$ is not.
\end{remark}
\begin{figure}[h]
\begin{tikzpicture}
\node{\includegraphics[width=0.275\textwidth]{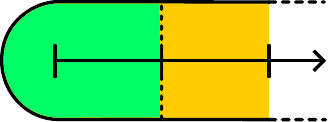}};
\node at (-1.275,-0.25) {$0$};
\node at (0.15,-0.25) {$1$};
\node at (1.575,-0.25) {$2$};
\node at (1.725,0.5) {$R$};
\end{tikzpicture}\\[0.75cm]
\begin{minipage}[c]{0.49\textwidth}
\centering
\begin{tikzpicture}
\node{\includegraphics[width=0.825\textwidth]{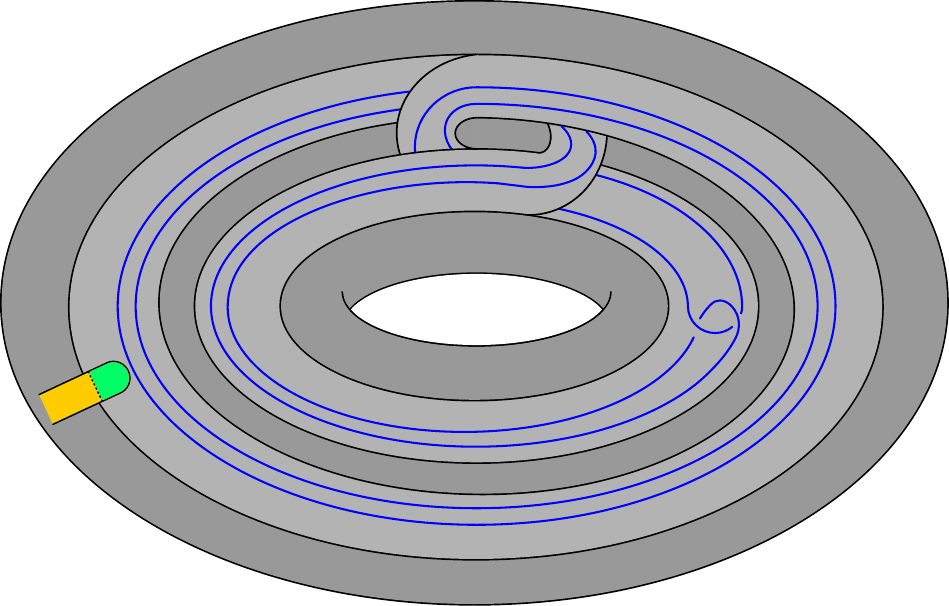}};
\end{tikzpicture}
\end{minipage}
\begin{minipage}[c]{0.49\textwidth}
\centering
\begin{tikzpicture}
\node{\includegraphics[width=0.825\textwidth]{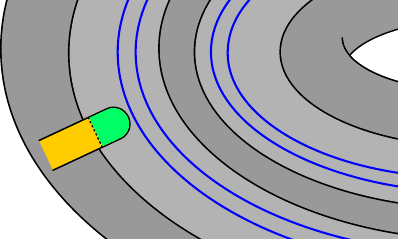}};
\end{tikzpicture}
\end{minipage}
\caption{Tube $R$ around the ray $[0,\infty)$ (above); Tori $W_0$ (blue), $W_1$ (light grey) and $W_2$ (dark grey) together with the initial part of $R$ (below)}
\label{Whiteheadpicture}
\end{figure}

Furthermore, $\breve{W}$ is contractible.
Indeed, we apply the Seifert-van Kampen theorem for the subsets\break$\breve{W}$ and $R$ of $W$ to obtain
\begin{equation}\nonumber
1=\pi_1(W)\cong\pi_1(\breve{W})\ast_{\pi_1(\partial\breve{W})}\pi_1(R)=\pi_1(\breve{W}).
\end{equation}
Thus, by the Whitehead and Hurewicz theorems, it is sufficient to show that $\breve{W}$ is homologically trivial.
The latter follows from a Mayer-Vietoris sequence for the same subsets as above.
\begin{corollary}\label{Whitehead}
$\breve{W}$ does not admit any complete Riemannian metric of uniformly positive scalar curvature which has mean convex boundary.
\end{corollary}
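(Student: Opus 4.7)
The plan is to apply the doubling template of the preceding corollaries. Assume for contradiction that $M := W - R$ carries a complete Riemannian metric $g$ of uniformly positive scalar curvature with mean convex boundary. First apply Corollary \ref{uniformization} to standardize the normal exponential map, then Theorem \ref{theorem27} with target second fundamental form $k \equiv 0$, which is admissible because $\tfrac{1}{n-1}\tr_{g_0}(0) = 0 \leq H_g$ by mean convexity. Choosing $\sigma$ to be a positive constant below the uniform lower bound of $\scal_g$, the theorem yields a metric $f \in \mathscr{R}_{>\sigma}(M)$ with $\mathrm{II}_f = 0$ and $f^{(\ell)}_0 = 0$ for all $\ell \geq 3$, so all odd derivatives of $f_t$ at $\partial M$ vanish and $f$ is doubling; moreover $f$ is complete by (j).

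Form the twofold copy $F := f \cup f$ on the smooth double $\mathsf{D}M^f$. By Theorem \ref{completemetrics}, $F$ is complete. Since scalar curvature is a local invariant and the gluing is smooth across the totally geodesic interface, $F$ satisfies $\scal_F > \sigma$ on all of $\mathsf{D}(W - R)$, i.e.\ $F$ has uniformly positive scalar curvature.

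It remains to identify $\mathsf{D}(W - R)$ well enough to invoke a non-existence theorem. Contractibility of $W - R$ together with contractibility of $\partial R \cong \mathbb{R}^2$ gives, via Seifert-van Kampen and Mayer-Vietoris, that $\mathsf{D}(W - R)$ is itself a contractible open $3$-manifold. The crucial point is that $R$ is a thickening of a properly embedded ray escaping through the Whitehead-type end of $W$, so each of the two copies of $W - R$ in the double retains that exotic end behavior; in particular $\mathsf{D}(W - R)$ is not simply connected at infinity and hence is not homeomorphic to $\mathbb{R}^3$. From here I expect non-existence of a complete metric of uniformly positive scalar curvature on $\mathsf{D}(W - R)$ to follow from \cite{CWY}, completing the contradiction.

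The hard part is precisely this last invocation: \cite{CWY} is formally stated for the Whitehead manifold $W$ itself, so one must either properly embed $W$ (or a closed submanifold of $W$ sufficient to carry the CWY obstruction) into $\mathsf{D}(W - R)$ and pull back the non-existence, or verify directly that the Chang-Weinberger-Yu argument applies to any contractible open $3$-manifold with a Whitehead-type end. Bridging this topological/analytical gap is the only non-routine ingredient; all other steps follow the pattern established in Corollaries \ref{R2}--\ref{fromHKRS08}.
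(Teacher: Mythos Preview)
Your overall strategy matches the paper's: deform to a doubling metric, double, and appeal to \cite{CWY}. But you have misidentified where the work lies and left the genuine gap unaddressed.

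First, your worry about the applicability of \cite{CWY} is misplaced. Theorem~1 of \cite{CWY} is not specific to the Whitehead manifold; it asserts that a contractible open $3$-manifold admitting a complete metric of uniformly positive scalar curvature must be homeomorphic to $\mathbb{R}^3$. So once you know that $\mathsf{D}(W-R)$ is contractible and not homeomorphic to $\mathbb{R}^3$, the invocation is immediate---no embedding of $W$ or transplanting of obstructions is needed.

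Second, the step you treat as routine is the one that actually requires an argument: showing that $\mathsf{D}(W-R)$ is not homeomorphic to $\mathbb{R}^3$. Your justification (``not simply connected at infinity'' because each copy ``retains that exotic end behavior'') is an assertion, not a proof; the doubling identifies the two boundary copies of $\mathbb{R}^2$, and it is not obvious a priori what this does to the end structure. The paper handles this cleanly by a different route: it removes the solid torus $\iota(T)$ from one copy of $W-R$ inside the double, sets $X:=\mathsf{D}(W-R)-\iota(T)$, and computes via Seifert--van Kampen that
\[
\pi_1(X)\cong\pi_1\bigl(W-(T\cup R)\bigr)\ast\pi_1(W-R)\cong\pi_1(W-(T\cup R))\cong\pi_1(W-T),
\]
which is not finitely generated by Newman--Whitehead. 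Since the complement of a solid torus in $\mathbb{R}^3$ always has finitely generated fundamental group (Wirtinger presentation), $\mathsf{D}(W-R)$ cannot be $\mathbb{R}^3$. This is the missing idea in your proposal.
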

\begin{proof}
Using the deformation principle and \cite[Thm.~1]{CWY}, it is sufficient to show that the double $\mathsf{D}\breve{W}$ is contractible but not homeomorphic to $\mathbb{R}^3$.
The contractibility can be proven in a similar manner to that of $\breve{W}$.
In the second part of the proof, we consider the manifold
\begin{align}\nonumber
X&:=\mathsf{D}\breve{W}-\iota(T_0)\\\nonumber
&\phantom{:}\approx(W-(T_0\cup\Int(R)))\cup_{\partial\breve{W}}\breve{W}
\end{align}
where $\iota\colon\breve{W}\to\mathsf{D}\breve{W}$ is the canonical inclusion map for the first summand.
According to Seifert-van Kampen, the fundamental group of $X$ is given by
\begin{align}\nonumber
\pi_1(X)&\cong\pi_1(W-(T_0\cup R))\ast\pi_1(\breve{W})\\\nonumber
&=\pi_1(W-(T_0\cup R))\\\nonumber
&\cong\pi_1(W-T_0)
\end{align}
and hence, it is not finitely generated.
If $\mathsf{D}\breve{W}$ was homeomorphic to $\mathbb{R}^3$, then $\pi_1(X)$ would allow a finite generating system, again using the Wirtinger presentation.
\end{proof}


\begin{bibdiv}
\begin{biblist}

\bib{Almeida}{article}{
   author={Almeida, S.},
   title={Minimal hypersurfaces of a positive scalar curvature manifold},
   journal={Math. Z.},
   number={190},
   date={1985},
   pages={73--82},
}

\bib{BGM}{article}{
   author={B\"{a}r, C.},
   author={Gauduchon, P.},
   author={Moroianu, A.}
   title={Generalized cylinders in semi-Riemannian and Spin geometry},
   journal={Math. Z.},
   number={249},
   date={2005},
   pages={545--580},
}

\bib{BH2022}{article}{
   author={B\"{a}r, C.},
   author={Hanke, B.},
   title={Local flexibility for open partial differential relations},
   journal={Comm. Pure Appl. Math.},
   number={75},
   date={2022},
   pages={1377--1415},
}

\bib{BH2023}{article}{
   author={B\"{a}r, C.},
   author={Hanke, B.},
   title={Boundary conditions for scalar curvature},
   conference={
      title={Perspectives in scalar curvature. Vol. 2},
   },
   book={
      publisher={World Sci. Publ., Hackensack, NJ},
   },
   isbn={978-981-124-999-0},
   isbn={978-981-124-935-8},
   isbn={978-981-124-936-5},
   date={2023},
   pages={325--377},
   review={\MR{4577919}},
}

\bib{BrH}{book}{
   author={Bridson, M.R.},
   author={Haefliger, A.},
   title={Metric Spaces of Non-Positive Curvature},
   series={Grundlehren der mathematischen Wissenschaften},
   volume={319},
   publisher={Springer Berlin, Heidelberg},
   date={1999},
   pages={xxi+643},
   isbn={978-3-540-64324-1},
}

\bib{BBI}{book}{
   author={Burago, D.},
   author={Burago, Y.},
   author={Ivanov, S.},
   title={A course in metric geometry},
   series={Graduate Studies in Mathematics},
   volume={33},
   publisher={American Mathematical Society, Providence, RI},
   date={2001},
   pages={xiv+415},
   isbn={0-8218-2129-6},
}

\bib{Burtscher}{article}{
   author={Burtscher, A.Y.},
   title={Length structures on manifolds with continuous Riemannian metrics},
   journal={New York J. Math.},
   number={21},
   date={2015},
   pages={273--296},
}

\bib{CL20241}{article}{
   author={Carlotto, A.},
   author={Li, Ch.},
   title={Constrained deformations of positive scalar curvature metrics},
   journal={J. Diff. Geom.},
   number={126},
   date={2024},
   pages={475--554},
}

\bib{CL20242}{article}{
   author={Carlotto, A.},
   author={Li, Ch.},
   title={Constrained deformations of positive scalar curvature metrics, II},
   journal={Comm. Pure Appl. Math.},
   number={77},
   date={2024},
   pages={795--862},
}

\bib{Cecchini}{article}{
   author={Cecchini, S.},
   title={A long neck principle for Riemannian spin manifolds with positive scalar curvature},
   journal={Geom. Funct. Anal.},
   number={30},
   date={2020},
   pages={1183--1223},
}

\bib{CWY}{article}{
   author={Chang, S.},
   author={Weinberger, S.},
   author={Yu, G.},
   title={Taming 3-manifolds using scalar curvature},
   journal={Geom. Dedicata},
   number={148},
   date={2010},
   pages={3--14},
}

\bib{CL}{article}{
   author={Chodosh, O.},
   author={Li, Ch.},
   title={Generalized soap bubbles and the topology of manifolds with positive scalar curvature},
   journal={Ann. of Math. (2)},
   number={199},
   date={2024},
   pages={707--740},
}

\bib{CK}{article}{
   author={Croke, C.},
   author={Kleiner, B.},
   title={A warped product splitting theorem},
   journal={Duke Math. J.},
   number={67},
   date={1992},
   pages={571--574},
}

\bib{CSS}{arxiv}{ 
   author={Cruz, T.}, 
   author={Silva Santos, A.},
   title={Curvature Deformations on Complete Manifolds with Boundary}, 
   note={Preprint available on \url{https://arxiv.org/abs/2501.10855}}, 
   year={2025} 
}

\bib{Frerichs}{thesis}{ 
   author={Frerichs, H.}, 
   title={Skalarkrümmung auf Mannigfaltigkeiten mit nicht-kompaktem Rand}, 
   note={available at \url{https://nbn-resolving.de/urn:nbn:de:bvb:384-opus4-1081965}}, 
   year={2022} 
}

\bib{GL1980}{article}{
   author={Gromov, M.},
   author={Lawson, H.B.},
   title={Spin and scalar curvature in the presence of a fundamental group. I},
   journal={Ann. of Math. (2)},
   number={111},
   date={1980},
   pages={209--230},
}

\bib{HKRS}{article}{
   author={Hanke, B.},
   author={Kotschick, D.},
   author={Roe, J.},
   author={Schick, T.},
   title={Coarse topology, enlargeability, and essentialness},
   journal={Ann. Sci. École Norm. Sup.},
   number={41},
   date={2008},
   pages={471--493},
}

\bib{Hirsch}{book}{
   author={Hirsch, M.W.},
   title={Differential topology},
   series={Graduate Texts in Mathematics},
   volume={33},
   publisher={Springer New York, NY},
   date={1976},
   pages={x+222},
   isbn={978-0-387-90148-0},
}

\bib{Kasue}{article}{
   author={Kasue, A.},
   title={Ricci curvature, geodesics and some geometric properties of Riemannian manifolds with boundary},
   journal={J. Math. Soc. Japan},
   number={35},
   date={1983},
   pages={117--131},
}

\bib{LUY}{arxiv}{ 
   author={Lesourd, M.}, 
   author={Unger, R.},
   author={Yau, S.-T.}
   title={Positive Scalar Curvature on Noncompact Manifolds and the Liouville Theorem}, 
   note={Preprint available on \url{https://arxiv.org/abs/2009.12618}, to appear in Comm. Anal. Geom.}, 
   year={2020} 
}

\bib{NW}{article}{
   author={Newman, M.H.A.},
   author={Whitehead, J.H.C.},
   title={On the group of a certain linkage},
   journal={Q. J. Math.},
   number={8},
   date={1937},
   pages={14--21},
}

\bib{PV}{article}{
   author={Pigola, S.},
   author={Veronelli, G.},
   title={The smooth Riemannian extension problem},
   journal={Ann. Sc. Norm. Super. Pisa Cl. Sci. (5)},
   number={20},
   date={2020},
   pages={1507--1551},
}

\bib{Sakurai}{article}{
   author={Sakurai, Y.},
   title={Rigidity of manifolds with boundary under a lower Ricci curvature bound},
   journal={Osaka J. Math.},
   number={54},
   date={2017},
   pages={85--119},
}

\bib{Seeley}{article}{
   author={Seeley, R.T.},
   title={Extension of $C^{\infty}$ functions defined in a half space},
   journal={Proc. Amer. Math. Soc.},
   number={15},
   date={1964},
   pages={625--626},
}

\bib{Wang}{article}{
   author={Wang, J.},
   title={Contractible 3-manifolds and Positive scalar curvature (I)},
   journal={J. Diff. Geom.},
   number={127},
   date={2024},
   pages={1267--1304},
}

\bib{WZ}{article}{
   author={Wang, X.},
   author={Zhang, W.}
   title={On the Generalized Geroch Conjecture for Complete Spin Manifolds},
   journal={Chin. Ann. Math. Ser. B},
   number={43},
   date={2022},
   pages={1143--1146},
}

\end{biblist}
\end{bibdiv}

\end{document}